\newcommand{\ab}{\allowbreak}
\newcommand{\subscript}[2]{$#1 _ #2$}
\newtheorem{theorem}{Theorem}[section]
\newtheorem{proposition}[theorem]{Proposition}
\newtheorem{corollary}[theorem]{Corollary}
\newtheorem{lemma}[theorem]{Lemma}
\theoremstyle{definition}
\newtheorem{definition}[theorem]{Definition}
\newtheorem{notation}[theorem]{Notation}
\newtheorem{example}[theorem]{Example}
\newtheorem{remark}[theorem]{Remark}
\newcommand{\E}{\mathbb{E}}
\newcommand{\cP}{\mathcal{P}}
\newcommand{\C}{K}
\begin{document}\thispagestyle{empty}

% \begin{abstract}
% From the study of the high order freeness of random matrices, it is known that the order $r$ cumulant of a polynomial of GUE/GOE is of order $N^{2-r}$ if $r$ is fixed. In this work, we extend the study along two directions. First, we include the deterministic matrices into discussion and consider arbitrary polynomials in random matrices and deterministic matrices. Second, we consider the ultra high order cumulants in the sense that $r$ is arbitrary, i.e., could be $N$ dependent. For any given polynomial in GUE/GOE and deterministic matrices, we show the ultra high order cumulant is order {\color{red}[....]}. We then show that the result is no longer true if one replaces Gaussian esemble by generally distributed Wigner matrices. Instead, for any given polynomial in general Wigner matrices and deterministic matrices, we have the order {\color{red}[....]}. These results are then used to derive three types of quantitative CLT for the trace of any given polynomial in these random matrices variables: a CLT with a Cram\'{e}r type correction, a Berry-Esseen bound for the CLT and a moderate deviation result. In contrast the second order freeness  which implies the CLT for linear eigenvalue statistics of polynomials in random matrices, our study on ultra high order cumulants leads to the quantitative version of the CLT. {\color{red}[Here comes the highlight of the methodology]}
% \end{abstract}

% \maketitle

\begin{center}
\Large\bf
Ultra high order cumulants and quantitative CLT for polynomials in  Random Matrices
\end{center}

\vspace{0.5cm}
\renewcommand{\thefootnote}{\fnsymbol{footnote}}
\hspace{5ex}	
\begin{center}
 \begin{minipage}[t]{0.4\textwidth}
\begin{center}
Zhigang Bao\footnotemark[1]  \\
\footnotesize {University of Hong Kong}\\
{\it zgbao@hku.hk}
\end{center}
\end{minipage}
\begin{minipage}[t]{0.4\textwidth}
\begin{center}
Daniel Munoz George\footnotemark[2]  \\ 
\footnotesize {University of Hong Kong}\\
{\it dmunozgeorge@gmail.com}
\end{center} 
\end{minipage}
\end{center}

\footnotetext[1]{Supported by Hong Kong RGC Grant 16303922, NSFC12222121 and NSFC12271475}
\footnotetext[2]{Supported by  National Key R\&D Program of China (No. 2023YFA1010400)}

\vspace{4ex}
\begin{center}
 \begin{minipage}{0.8\textwidth} {
  Abstract: From the study of the high order freeness of random matrices, it is known that the order $r$ cumulant of the trace of a polynomial of $N$-dimensional GUE/GOE is of order $N^{2-r}$ if $r$ is fixed. In this work, we extend the study along three directions. First, we also consider generally distributed Wigner matrices with subexponential entries. Second, we include the deterministic matrices into discussion and consider arbitrary polynomials in random matrices and deterministic matrices.  Third, more importantly, we consider  the  ultra high order cumulants in the sense that $r$ is arbitrary, i.e., could be $N$ dependent. Our main results are the upper bounds of the ultra high order cumulants, for which not only the $N$-dependence but also the $r$-dependence become significant. These results are then used to derive three types of quantitative CLT for the trace of any given self-adjoint polynomial in these random matrix variables: a CLT with a Cram\'{e}r type correction, a Berry-Esseen bound, and a concentration inequality which captures both the Gaussian tail in the small deviation regime and $M$-dependent tail in the large deviation regime, where $M$ is the degree of the polynomial.  In contrast to the second order freeness  which implies the CLT for linear eigenvalue statistics of polynomials in random matrices, our study on the ultra high order cumulants leads to the quantitative versions of the CLT. 
 
  % {\color{red}[We shall reconsider the following methodology part.]} Inspired by traffic freeness and the combinatorial approach of free probability, our methodology consists in {\color{red}[what does  ``consist on" mean?]}the study of the mixed cumulants as the sum over partitions. These partitions can be indexed by graphs which turns our problem into a topological problem in graph theory. Given a graph, the order of the cumulants can be obtained from the number of two-edge connected components of the graph. In the GUE case we further get a genus expansion from which a tighter bound can be concluded. 
}
\end{minipage}
\end{center}

\vspace{4ex}
\section{Introduction}

Since Voiculescu discovered the deep connection between random matrices and free probability in his seminal work \cite{V}, a substantial amount of research has been devoted to understanding the spectral properties of general polynomials in random matrices and deterministic matrices, with the aid of free probability theory. In particular, using freely independent random variables to characterize the limiting spectral properties of polynomials in random matrices and deterministic matrices has become a general framework. Specifically, Voiculescu revealed that the limit of the expected empirical spectral distribution of a random matrix polynomial can be characterized by the distribution of the corresponding polynomial of free random variables. This is known as the first order freeness; see \cite{VDN92, NS, Dyk93} for more details.
Following this, further research such as \cite{MSpart1, MSS, CMSS} has explored the second-order fluctuation properties of linear eigenvalue statistics(LES) around the first order limit, by looking at the high order correlation functions.  It has been now well understood that for general self-adjoint random matrix polynomials, their LESs are asymptotically Gaussian, i.e., CLT holds. A notable feature of this type of CLT is that  for random matrix polynomials formed from $N$-dimensional random matrices, the CLT for their trace does not require additional normalization in terms of $N$ factor, unlike the classical iid sum which requires an $N^{-1/2}$ scaling. When the random matrices are from a Gaussian ensemble such as GUE/GOE or more generally unitarily/orthogonally invariant ensembles,  the variance of this CLT has been  precisely characterized by using the limiting free random variables; see \cite{MSpart1, MSS, CMSS, MP13, R14}. These results are referred to as the second order freeness.
When Gaussian matrices are replaced by Wigner matrices with general distributions, the variance depends on more parameters, but the CLT still holds in general cases; see \cite{MMPS,BCDM} for instance. Since the variance can also be viewed as the second-order cumulant, and the proof of the CLT requires demonstrating that higher-order cumulants can be neglected for any fixed order, research into higher-order cumulants has led to the introduction of the concept of high order freeness in \cite{MSpart1, MSS, CMSS}. In particular, it is revealed that for a polynomial of GUE matrices, the $r$-th cumulant of the trace of the polynomial is of order $N^{2-r}$ as long as $r$ is fixed. A precise expression of the leading order of this fixed $r$-order cumulant has also been given in \cite{MSpart1}. A recent  extension  of this result to generally distributed Wigner matrices but with Gaussian diagonal entries has been given in \cite{MM}. We emphasize that the studies in \cite{MSpart1} and \cite{MM} do not include deterministic matrices into the polynomials of Wigner matrices. We also refer to \cite{BS09, S06} for other studies related to high order freeness.

In this work, we extend this line of research along three directions. First, we also consider generally distributed Wigner matrices with subexponential entries. Second, we include the deterministic matrices into discussion and consider arbitrary polynomials in random matrices and deterministic matrices.  Third, more importantly, we consider  the  ultra high order cumulants in the sense that $r$ is arbitrary, i.e., could be $N$ dependent. Our main results are the upper bounds of the ultra high order cumulants, for which not only the $N$-dependence but also the $r$-dependence becomes significant. Including the $r$-dependence in the general $(r,N)$ setting is actually the major task of this work. But we must notice that even the $N$ order in case $r$ is fixed is not known in the literature when deterministic matrices are also involved. In particular, we will show that for generally distributed Wigner matrices, the $N$ factor is no longer $N^{2-r}$. Instead, one has $N^{1-r/2}$ in general. 

A primary application of the upper bounds on the ultra high order cumulants is to obtain various versions of quantitative CLTs for the LES of polynomials of random matrices. Apart from the aforementioned work on second order freeness, the CLT for the LES of random matrix models has been widely studied, and a vast amount of research has been devoted to this topic. For instance, we refer to \cite{KKP, LP1, BH, SW13, LS22, BY05, C01, Ch09, Sh11, HK17} for studies on a single Wigner matrix model and \cite{DF19, JL20, LSX} on deformed Wigner matrix which could be regarded as a rather simple polynomial of Wigner matrices and deterministic matrices. We also refer to the work \cite{CES23, CES22, Re23} for the CLT of the trace of random matrices of the form $f_1(X)D_1\cdots f_k(X)D_k$ for Wigner matrix $X$, deterministic matrices $D_i$'s and regular functions $f_i$'s. This model goes beyond the polynomial of Wigner matrix $X$ and deterministic matrices.    Despite the fact that the CLT for LES has been a classic topic in random matrix theory, the study of quantitative versions of this type of CLT is more recent, except for the results on circular ensembles \cite{DS94, Stein, Joh97, CJ21, JL20, DS11}. For Hermitian ensembles, the study on the quantitative CLT of LES emerged only very recently. In \cite{LLW19}, the authors obtained a convergence rate in the quadratic Kantorovich distance, for the CLT of the LES of the $\beta$-ensembles with one-cut potentials. In \cite{BB21}, the authors obtained a convergence rate of order $N^{-1/5}$ in Kolmogorov distance, for the CLT of LES of GUE/LUE/JUE. Both the works \cite{LLW19} and \cite{BB21} consider the invariant ensembles and the convergence rates do not seem to be optimal in general. For non-invariant ensembles such as the generally distributed Wigner matrices, in a recent work \cite{BH}, a nearly optimal convergence rate in Kolmogorov distance was obtained. Particularly, for all test functions $f\in C^5(\mathbb{R})$, it is shown that the convergence rate is either $N^{-1/2+\varepsilon}$ or $N^{-1+\varepsilon}$, depending on the first Chebyshev coefficient of $f$ and the third moment of the diagonal matrix entries.  We also refer to a recent work \cite{CES23}, a CLT for the generalized linear statistics $\text{Tr} f(D)X$ with deterministic matrix $D$ was established for $f\in H_0^2(\mathbb{R})$ on both macroscopic and mesoscopic scale. Especially, on macroscopic scale, the result in \cite{CES23} indicates a $N^{-1/2}$ convergence rate in the sense of moments.  When extending the study from classical single random matrix models, such as the Wigner matrix, to arbitrary self-adjoint polynomials of a collection of Wigner matrices and deterministic matrices, research has focused on a more specialized class of test functions, such as polynomials, which are more amenable to combinatorial approaches. As mentioned earlier, the research on the second order freeness can indicate the CLT for LES of polynomials of unitarily/orthogonally invariant matrices such as GUE/GOE \cite{MSS}. When one considers self-adjoint polynomials of generally distributed Wigner matrices and deterministic matrices, the second order freeness no longer holds in the sense of \cite{MSpart1} but CLT still applies to LES under mild conditions.   So far, no quantitative result on the CLT of LES for a general self-adjoint polynomial is known. In this work, applying the bound on ultra high order cumulants, we obtain three types of quantitative results in the spirit of the cumulant method for quantitative CLT \cite{DJ,SS12}: a CLT with a Cram\'{e}r type correction, a Berry-Esseen bound for the CLT and a concentration inequality which captures both the Gaussian tail in the small deviation regime and $M$-dependent tail in the large deviation regime, where $M$ is the degree of the polynomial. 

We start with some preliminary definitions in Section \ref{s.pre} and state our main results in Section \ref{Section: Main results}. 

\subsection{Preliminaries} \label{s.pre}

We first define the cumulants following \cite[Chapter 1]{MS}. For a collection of random variables $(Y_m)_m$ we define the $n$-mixed cumulant of $Y_1,\dots,Y_n$, denoted by $K_n(Y_1,\dots,Y_n)$, via the recursive equation,
\begin{equation}\label{Equation_Moment cumulant relation 1}
\E(Y_1\cdots Y_n)=\sum_{\pi \in \cP(n)}K_{\pi}(Y_1,\dots,Y_n).
\end{equation}
Equivalently, the M\"{o}bius inversion theorem says,
\begin{equation}\label{Equation_Moment cumulant relation 2}
K_n(Y_1,\dots,Y_n)=\sum_{\pi\in \cP(n)}(-1)^{\#(\pi)-1}(\#(\pi)-1)!\E_{\pi}(Y_1\cdots Y_n).
\end{equation}
Here in the above notations $\cP(n)$ represents the set of partitions of $[n] \vcentcolon = \{1,\dots,n\}$ and for a partition $\pi$, the number of blocks of $\pi$ is denoted by $\#(\pi)$ and
$$K_{\pi}(Y_1,\dots,Y_n) = \prod_{B=\{i_1,\dots,i_k\}\text{ block of }\pi} K_{|B|}(Y_{i_1},\dots,Y_{i_k}),$$
$$\E_{\pi}(Y_1\cdots Y_n) = \prod_{B=\{i_1,\dots,i_k\}\text{ block of }\pi} {{\E(Y_{i_1}\cdots Y_{i_k})}}.$$
The Equations (\ref{Equation_Moment cumulant relation 1}) and (\ref{Equation_Moment cumulant relation 2}) are usually called the moment-cumulant relations. When the variables $Y_n$ are all the same, say $Y$, then, $K_n(Y) \equiv K_n(Y,\dots,Y)$ recovers the definition of the $n$-cumulant of $Y$.

% \begin{example}\label{Example_Gaussian cumulants} 
% If $Z$ has standard normal distribution then its cumulants are given by $K_n(Z)=0$ for all $n$ except $K_2(Y)=1$. This can be verified by computing its moments, $\E (Z^n)=(n-1)!! $ if $n$ is even and $0$ otherwise. 
% % $$
% % \E(Z^n)= \left\{ \begin{array}{lcc} 
% % 0 & if & n\text{ is odd} \\ \\ 
% % (n-1)!! & if & n\text{ is even}\end{array} 
% % \right.
% % $$
% Then as the double factorial $(n-1)!!$ counts the number of partitions of $P(n)$ with all blocks of size $2$ then it follows that $K_2(Y)=1$ and $0$ otherwise so that the moment-cumulant relation is satisfied,
% $$\E(Z^n)=\sum_{\pi\in P(n)} K_{\pi}(Y)=\sum_{\pi\in P_2(n)}1 =|P_2(n)|.$$
% Here $P_2(n)$ denotes the partitions with only blocks of size $2$. If $Z\sim \mathbb{C}\mathcal{N}(0,1)$ has complex standard Gaussian distribution then $Z= a+\mathrm{i} b$ where $a$ and $b$ are independent $\mathcal{N}(0,1/2)$. Then for $m\in\mathbb{N}$,
% $$K_m(a+\mathrm{i} b,\dots,a+\mathrm{i} b,a-\mathrm{i} b,\dots,a-\mathrm{i} b)=K_m(a)+(-1)^u \mathrm{i}^m K_m(b),$$
% where $u$ is the number of times that $a-\mathrm{i} b$ appears in above expression. By the mentioned before $K_m(a)=K_m(b)=0$ for $m\neq 2$ and $K_2(a)=K_2(b)=1/2$, so that, $K_m(a+\mathrm{i}b,\dots,a+\mathrm{i}b,a-\mathrm{i}b,\dots,a-\mathrm{i}b)=0,$
% unless $m=2$ and $a+\mathrm{i}b$ and $a-\mathrm{i}b$ appears exactly once each, in which case,
% $K_2(Z,\overline{Z})=1.$
% \end{example}

\begin{example}\label{Example_Gaussian cumulants} 
If $Z\sim \mathcal{N}(0,1)$ has standard real normal distribution then its cumulants are given by $K_n(Z)=0$ for all $n$ except $K_2(Y)=1$. If $Z\sim \mathbb{C}\mathcal{N}(0,1)$ has  standard complex Gaussian distribution, i.e.,  $Z= a+\mathrm{i} b$ where $a$ and $b$ are independent $\mathcal{N}(0,1/2)$, then any cumulant of $Z$ is $0$ except $K_2(Z,\overline{Z})=1$. This can be easily derived from the multi-linearity of the cumulants and the cumulants of $a$ and $b$.
\end{example}

We will consider  real/complex $N\times N$ random Hermitian matrices $X$ for various models, $\mathcal{X}\sim GUE,GOE,Wigner$. For the sake of clarity, we make the following definition for the Wigner model considered in our paper, and GUE/GOE are special examples of it.

\begin{definition}\label{Definition: Wigner Matrix}
By a Wigner matrix $X\equiv X(N)=\frac{1}{\sqrt{N}}(x_{i,j})\in \mathbb{C}^{N\times N}$ we mean a self-adjoint random matrix, i.e., $x_{i,j}=\overline{x_{j,i}}$, with independent (up to symmetry) entries which satisfy the moment conditions $\mathbb{E}x_{i,j}=0$ for all $i,j$ and $\mathbb{E}|x_{i,j}|^2=1$ for $i\neq j$. We further make the following assumptions. 

\begin{itemize}

% \item[$\diamond$]
% the entries are complex random variables;

% \item[$\diamond$]
% the matrix is self-adjoint: $x_{i,j}=\overline{x_{j,i}}$;
 
% \item[$\diamond$] 
% all entries on and above the diagonal are independent: 
% $\{x_{i,j}\}_{i<j}\ab \cup \{x_{i,i}\}_i$ are independent;

\item[$\diamond$]
the entries above the diagonal, $\{x_{i,j}\}_{i<j}$, are
identically distributed as a complex random variable $x_o$ whose odd  cumulants are all $0$, that is, $\C_{2n+1}(x_o^{\epsilon_1},\dots,x_o^{\epsilon_{2n+1}})=0$ for any $\epsilon_i\in \{1,-1\}$ and $n\in\mathbb{N}$. Here $x_o^{1}=x_o$ while $x_o^{-1}=\overline{x_o}$ means its conjugate. We further assume $x_o\stackrel{d}= \overline{x_o}$. 

\item[$\diamond$] 
the diagonal entries, $\{x_{i,i}\}_{i}$, are identically
distributed as a real random variable $x_d$ whose odd cumulants are all $0$, that is, $\C_{2n+1}(x_d)=0$ for any $n\in\mathbb{N}$;

% \item[$\diamond$]
% $x_{i,j}\overset{d}{=}\overline{x_{i,j}}$ for any $i\neq j$;

% \item[$\diamond$] 
% $\E(x_{i,j})=0$ for all $i,j$, equivalently, $\C_1(x_{i,j})=0$;

\item[$\diamond$] For any $r$, any $i,j$ and $\epsilon_i\in \{1,-1\}$, 
$|\C_r(x_{i,j}^{\epsilon_1},\dots,x_{i,j}^{\epsilon_r})|\leq \mathsf{C}^rr!$ for some constant $\mathsf{C}>0$. 

\end{itemize}
We call a Wigner matrix a  GUE if its off-diagonal entries are $\mathbb{C}\mathcal{N}(0, N^{-1})$ and the diagonal entries are $\mathcal{N}(0, N^{-1})$. We call a Wigner matrix a GOE if its off-diagonal entries are $\mathcal{N}(0, N^{-1})$ and the diagonal entries are $\mathcal{N}(0, 2N^{-1})$.
\end{definition}

\begin{remark}
We remark that the condition that the odd cumulants are zero is satisfied by all symmetrically distributed random variables. Furthermore, it is known that the cumulant bound condition is equivalent to the entries being subexponential, that is, $\mathbb{E}(\exp(\delta |x_{i,j}|))<\infty$ for some $\delta>0$.   We refer to Lemma 2.8 of \cite{DJ} for this fact. The subexponential assumption is natural, as we will consider cumulants of arbitrary order. Nevertheless, it is possible to consider distributions with heavier tails of the form $\mathbb{P}(|x_{ij}|>t)\sim \exp(-t^{\gamma})$ with $\gamma\in (0,1)$. We leave this potential extension to interested readers. Further, note that the vanishing of all odd cumulants together with a subexponential tail implies symmetry of the distribution for a real random variable. But we prefer to state our conditions in terms of cumulants as they will be directly used in future discussions. We believe the vanishing odd cumulants condition can be weakened, but this is beyond our current techniques, as it will be crucial when we reduce our combinatorial argument for the Wigner case to the Gaussian case; see the proof of Theorem \ref{Theorem: Order of Wigner case for t()} for instance.
\end{remark}

Let $\mathcal{X}=(X_i)_{i\in I}$ be a collection of independent random matrices. We write
$$X_p= \frac{1}{\sqrt{N}}(x_{i,j}^{(p)})_{i,j=1}^N$$
for any $p\in I$. In the sequel, we say that $\mathcal{X}\sim GUE, GOE, Wigner$ if it is a collection of independent GUE, GOE, or Wigner matrices, respectively. In addition,  let $\mathcal{D}=(D_j)_{j\in J}$ be a collection of $N\times N$ deterministic matrices $D_j\equiv D_j(N)$ such that
$$\sup_N ||D_j||<\infty$$ for any $j\in J$, where $||\cdot||$ denotes the operator norm. We consider a polynomial in $\mathcal{X}$ and $\mathcal{D}$ variables as follows
\begin{align}
P(\mathcal{X},\mathcal{D}):=X_{i^{(1)}_1}D_{j^{(1)}_1}\cdots X_{i^{(1)}_{m_1}}D_{j^{(1)}_{m_1}}+\cdots +X_{i^{(t)}_1}D_{j^{(t)}_{1}}\cdots X_{i^{(t)}_{m_t}}D_{j^{(t)}_{m_t}}, \label{def of P}
\end{align}
where we allow repetition of the indices $i$ and $j$. As we will study the centered trace of a polynomial of $\mathcal{X}$ and $\mathcal{D}$ variables, we do not need to consider a monomial with deterministic matrices only as it will contribute nothing. Further, due to the cyclicity of the trace, we can also take into consideration the models where the first matrix of some monomial is deterministic. Finally, since the deterministic matrices might be the identity we can also take into consideration the models including powers of the random matrices, for instance. Hence, in terms of the quantity of interest in (\ref{object}) below, our assumption on the form of $P(\mathcal{X},\mathcal{D})$ is fully general. A key parameter in our discussion will be
\begin{align}\label{Definition: M}
M:=\max_{k=1,\ldots,t} m_{k},
\end{align}
which will be called the \textit{degree} of the polynomial. Throughout the paper, we always consider the setting that $M$ and $t$ are fixed.  We further denote 
\begin{align}\label{Definition: K}
K=\vcentcolon \max_{u\in \{j_1^{(1)},\dots,j_{m_t}^{(t)}\}}\sup_{N} ||D_u||.
\end{align}
 
 We study the (possibly complex) random variable,
\begin{align}
\mathbf{X}=\text{Tr} P(\mathcal{X},\mathcal{D}) - \E \text{Tr} P(\mathcal{X},\mathcal{D}), \label{object}
\end{align}
and its normalized version 
\begin{align}
\mathfrak{X}=\frac{\text{Tr} P(\mathcal{X},\mathcal{D}) - \E \text{Tr} P(\mathcal{X},\mathcal{D})}{\sqrt{\text{Var}(\text{Tr} P(\mathcal{X},\mathcal{D}))}}. \label{object_normalized}
\end{align}
 Here the variance of a complex random variable $z$ is given by $\mathbb{E}(|z-\mathbb{E}(z)|^2)$.

For our main result we investigate upper bounds for all cumulants of the random variable $\mathbf{X}$ for an arbitrary polynomial with fixed degree $M$. Later on, we use our results to provide quantitative CLTs and concentration results for $\mathfrak{X}$, for the case when the polynomial is self-adjoint (under cyclic permutation of matrices if necessary). We detail this in Section \ref{Section: Main results} where we state our results.

\subsection{Main results}\label{Section: Main results}

We might consider the following conditions. For each of our results we will state which of them are used.
\begin{enumerate}[label=$($\subscript{\mathbf{C}}{{\arabic*}}$)$]
    \item $\mathcal{D}=(D_j)_{j\in J}$ is a collection of deterministic matrices such that $\sup_N ||D_j||<\infty$ for any $j\in J$, where $||\cdot||$ denotes the operator norm; 

    \item {{$\text{Var}(\text{Tr}P(\mathcal{X},\mathcal{D}))\vcentcolon = \mathbb{E}(|\text{Tr}P(\mathcal{X},\mathcal{D})-\mathbb{E}\text{Tr}P(\mathcal{X},\mathcal{D})|^2)>\mathfrak{c}$ for some constant $\mathfrak{c}>0$.}}

    \item $P$ is a self-adjoint polynomial, that is, the random matrix $P(\mathcal{X},\mathcal{D})$ is self-adjoint.

\end{enumerate}

Instead of $(\mathbf{C}_1)$, if we have the following condition $(\mathbf{C}_1')$, we can derive sharper results in the Wigner case. 

\begin{enumerate}

    \item[$(\mathbf{C}_1')$] $\mathcal{D}=(D_j)_{j\in J}$ is a collection of deterministic matrices such that $\sup_N ||D^{abs}_j||<\infty$ for any $j\in J$, where  $D^{abs}=(|d_{i,j}|)_{i,j=1}^N$ is the matrix of absolute values of entries.
\end{enumerate}
Notice that in case $D_j$'s are diagonal, $\mathbf{C}_1$ and $\mathbf{C}_1'$ are equivalent. 

The following results provide an upper bound for the cumulants. Here we do not require the polynomial to be self-adjoint but instead only  condition $\mathbf{C}_1$ is needed. Thus, our results provide an upper bound for $\mathbf{X}$ which might be a complex-random variable. Recall the parameters defined in (\ref{Definition: M}), (\ref{Definition: K}) and those in Definition \ref{Definition: Wigner Matrix}  and condition $\mathbf{C}_2$.  We emphasize that, throughout the paper, all constants are conventionally independent of  $N$.

\begin{theorem}\label{Theorem: Upper bound of cumulants GUE}
Let $\mathcal{X}\sim $ GUE or GOE. Consider the polynomial $P(\mathcal{X},\mathcal{D})$ in (\ref{def of P}) and the random variable $\mathbf{X}$ defined in (\ref{object}). Suppose that $\mathbf{C}_1$ is satisfied. 
Then there exists a constant $\theta_1\equiv \theta_1 (K, M, t)\in (0, 1)$ such that 
\begin{align}|\C_r(\mathbf{X})|\leq \frac{r!^{\frac{M}{2}}}{(\theta_1 N)^{r-2}}, \label{Bound of GUE}
\end{align}
for any $N$ and $r\geq 3$. 
\end{theorem}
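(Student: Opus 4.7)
The plan is to bound the unnormalized cumulant $|K_r(\text{Tr} P(\mathcal{X},\mathcal{D}))|$ via a Gaussian Wick expansion, extract the $N$-power from the Euler characteristic of the resulting ribbon graphs, and then divide by $\text{Var}^{r/2}\geq c^{r/2}$ to pass to $\mathbf{X}$. First, by multilinearity of the cumulants and the fact that $t$ is fixed, it suffices (up to a harmless factor $t^r$) to control a single mixed cumulant $K_r(\text{Tr}(Q_1),\ldots,\text{Tr}(Q_r))$ where each $Q_j=X_{i_1}D_{j_1}\cdots X_{i_{m_j}}D_{j_{m_j}}$ is one of the monomials of $P$ with $m_j\leq M$.

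Since the entries of $\mathcal{X}$ are jointly mean-zero Gaussian for GUE or GOE, the linked-cluster form of Wick's theorem (obtained by iterating the identity $\mathbb{E}[x_{ij}F]=\frac{1}{N}\mathbb{E}[\partial_{ji}F]$) expresses this mixed cumulant as a sum over Wick pairings of the at most $rM$ Gaussian factors that \emph{connect} all $r$ traces into a single diagram. Each connected pairing $\pi$ produces a connected ribbon graph obtained by gluing the $r$ labelled polygons (one per $\text{Tr}(Q_j)$) along their $X$-edges. Each Wick pair contributes $N^{-1}$ from the covariance, summing over the $V$ free indices contributes $N^V$, and the Euler identity $V-E+F=2-2g$ with $F\leq r$ (and the analogous inequality for non-orientable surfaces in the GOE case) gives $V-E\leq 2-r$, so $N^{V-E}\leq N^{2-r}$. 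The remaining sum of products of $D_{\cdot}$-entries along the faces of the ribbon graph is bounded by applying $|\text{Tr}(A_1\cdots A_k)|\leq N\|A_1\|\cdots\|A_k\|$ face by face, yielding a total deterministic factor of at most $K^{rM}$ after the face $N$-factors are absorbed into the vertex count.

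The number of Wick pairings of at most $rM$ Gaussian factors is $(rM-1)!!$, and Stirling gives
$$(rM-1)!!\leq C_0^{rM}\,(rM/e)^{rM/2}\leq C_1^{\,r}\,(r!)^{M/2}$$
for a constant $C_1=C_1(M)$. Combining all of the above,
$$|K_r(\text{Tr} P(\mathcal{X},\mathcal{D}))|\leq C^r\,(r!)^{M/2}\,N^{2-r}$$
for some $C=C(K,M,t)$; dividing by $\text{Var}^{r/2}\geq c^{r/2}$ and absorbing the prefactor $(C/\sqrt c)^2$ into $\theta_1$ (using $r-2\geq 1$) yields the stated bound. The main obstacle is step two: rigorously verifying that every \emph{connected} Wick pairing, with deterministic matrices interleaved between the random ones, indeed satisfies $V-E\leq 2-r$, and that the associated deterministic contribution truly factorizes face by face so that the operator-norm bound $K^{rM}$ applies. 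For GOE one must additionally handle non-orientable crosscap pairings, but this only affects constants. A careful book-keeping along the lines of a mixed Harer--Zagier genus expansion is precisely what produces the clean exponent $(r!)^{M/2}$ rather than the much weaker $(rM)!$ that a brute-force moment bound would yield.
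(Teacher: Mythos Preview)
Your approach is correct but takes a genuinely different route from the paper. You invoke the classical topological genus expansion: a connected Wick pairing glues the $r$ trace-polygons into a connected surface, and Euler's formula $V-E+F=2-2g\le 2$ (with the $r$ polygons as faces and the $m/2$ Wick pairs as edges) forces at most $m/2+2-r$ index-cycles, each of which carries a trace of a cyclic product of $D$'s (or of $D$'s and $D^{\top}$'s in the non-orientable GOE case) bounded by $N$ times operator norms. The paper avoids surface topology entirely: it appeals to the Mingo--Speicher graph-sum bound (Lemma~\ref{Lemma: Order of sums associated to deterministic matrices}), which reduces the question to bounding $t(D^{\pi_\tau})$, and then proves $t(D^{\pi_\tau})\le m/2+2-r$ by an explicit double induction---on $m$ for the base case $r=1$ in Lemma~\ref{Lemma: Order of GUE case base case}, then on $r$ in Theorem~\ref{Theorem: Order of GUE case}---while GOE is handled by writing $X=(Z+Z^{\top})/\sqrt 2$ and rerunning the induction for the resulting $\epsilon$-cumulants. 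Your argument is shorter and more conceptual for the Gaussian case; the paper's purely combinatorial route, on the other hand, is exactly what survives the passage to general Wigner matrices in Section~\ref{Section: The order of Wigner}, where $\tau$ is no longer a pairing and no ribbon surface is available to apply Euler's formula to. Two small slips worth fixing: for $V-E\le 2-r$ to follow from Euler you need $F\ge r$ (in fact $F=r$), not $F\le r$; and in the standard ribbon-graph convention the $r$ traces sit at the \emph{vertices} while the deterministic traces run along the boundary \emph{faces}, so your labels $V$ and $F$ are interchanged relative to the usual usage.
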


\begin{remark} The optimality of the $N^{2-r}$ factors has been demonstrated in the pure GUE case in  \cite{MSpart1}, when $r$ is fixed. Further, the factor $r!^{\frac{M}{2}}$ would implies a tail of the form $\exp(-c_N x^{\frac{2}{M}})$ for the random variable $\mathbf{X}$, according to Lemma 2.8 of \cite{DJ}. The factor $x^{\frac{2}{M}}$ in the exponent is essential as we are considering a polynomial of degree $M$. A trivial example would be a single GUE matrix $X$, and $\text{Tr} X^2=\sum_{ij} x_{ij}^2$ is considered. Apparently, in this case, $\mathbf{X}$ should be subexponential. 
\end{remark}

\begin{theorem}\label{Theorem: Upper bound of cumulants Wigner}
Let $\mathcal{X}\sim Wigner$.  Consider the polynomial $P(\mathcal{X},\mathcal{D})$ in (\ref{def of P}) and the random variable $\mathbf{X}$ defined in (\ref{object}).  Suppose that $\mathbf{C}_1$ is satisfied. 
Then there exists a constant $\theta_2 \equiv \theta_2 (K, M, \mathsf{C}, t) \in (0, 1)$ such that 
\begin{align}|\C_r(\mathbf{X})|\leq \frac{r!^{3M}}{(\theta_2 \sqrt{N})^{r-2}}
\label{Bound og Wigner}
\end{align}
for any $N$ and $r\geq 3$. If $\mathbf{C}_1'$ is additionally assumed, we get the sharper bound
\begin{align}|\C_r(\mathbf{X})|\leq \frac{r!^{M}}{(\theta_3 \sqrt{N})^{r-2}}.
\label{Bound og Wigner Special case}
\end{align}
with some constant $\theta_3\equiv \theta_3(K,M,\mathsf{C}, t) \in (0,1)$. 
\end{theorem}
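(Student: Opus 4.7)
The plan is to adapt the combinatorial machinery used for Theorem \ref{Theorem: Upper bound of cumulants GUE} (GUE/GOE case) to the generally distributed Wigner setting, while systematically tracking the contributions of higher-order entry-cumulants which vanish in the Gaussian case. The starting point is the cumulant expansion of $K_r(\mathrm{Tr}\,P(\mathcal{X},\mathcal{D}))$ obtained by multilinearity: this expresses the quantity as a sum indexed by (i) a choice of monomial in each of the $r$ copies, (ii) matrix indices $\alpha \in [N]$ for every matrix position, and (iii) a partition $\pi$ on the set of at most $rM$ random-matrix positions, together with a product of deterministic matrix entries. Each block $B$ of $\pi$ contributes a Wigner entry-cumulant $K_{|B|}(x^{(p)}_{ab},\ldots)$; by independence of entries across different Wigner matrices and of distinct entries within one Wigner matrix, this vanishes unless all positions in $B$ refer to a common $X_p$ and a common unordered index pair $\{a,b\}$.

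From here I would carry out three bookkeeping steps. First, I would invoke the subexponential bound $|K_k(x^{\epsilon_1},\ldots,x^{\epsilon_k})|\le Ck!$ on each block, together with $|d^{(j)}_{ab}|\le K$ coming from $(C_1)$, to reduce the expansion to a weighted sum over coloured multigraphs whose vertices are the distinct matrix indices and whose edges are either deterministic entries or hyper-edges representing blocks of $\pi$. Second, I would extract the $N$-factor: the prefactor is $N^{-rM/2}$, while summing over free indices contributes $N^v$ with $v$ the number of distinct index values. A connectedness argument (forced by the Leonov--Shiryaev formula, since cumulants of independent variables vanish) combined with an Euler-type inequality will show that each block of size $k\ge 3$ costs a factor $N^{1-k/2}$ relative to the pair-case $N^{2-k}$; summing across the $r$ copies yields the $(\sqrt N)^{\,2-r}$ scaling claimed in \eqref{Bound og Wigner}. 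Third, I would collapse the sum over (monomials, partitions, graph shapes) and bound the combinatorial count by $r!^{cM}$ for an explicit $c$; combined with $\prod_B |B|!\le r!^{c'M}$, this will produce the $r!^{3M}$ prefactor.

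For the sharper bound \eqref{Bound og Wigner Special case} under $(C_1')$, the key improvement is that one may perform path-sums of deterministic-entry absolute values directly in $\ell^1$, using the finite norm $\|D_j^{\mathrm{abs}}\|<\infty$, rather than passing through operator-norm / Cauchy--Schwarz chains on $\ell^2$ sums. This removes a factor of roughly $r!^{2M}$ of combinatorial slack in the path-counting and delivers the $r!^M$ bound in place of $r!^{3M}$.

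The main obstacle will be the $r$-dependent $N$-scaling in the second step: when blocks of $\pi$ are arbitrarily large -- as they may be once $r$ is allowed to grow with $N$ -- the genus-type/Euler-characteristic argument must be sharpened well beyond what is needed at fixed $r$. Every factor of $r$ in the exponent matters, and the bookkeeping for higher-cumulant blocks interacting with long chains of deterministic matrix entries has to be performed without loss. A secondary difficulty is ensuring that the combinatorial count of partitions and multigraph shapes is genuinely $r!^{O(M)}$ rather than $(rM)!$: this will hinge on the fixed degree $M$ and on the cyclic structure of each monomial trace to limit how many admissible graph shapes are associated to a given partition.
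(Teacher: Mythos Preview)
Your outline correctly identifies the starting point (the cumulant expansion and the Leonov--Shiryaev connectivity constraint $\tau\vee\gamma=1_m$), but the decisive step --- obtaining the $N^{1-r/2}$ scaling --- is left as an unspecified ``Euler-type inequality,'' and this is precisely where the substance lies. A generic Euler-characteristic count on the quotient graph $D^\pi$ does not yield $t(D^\pi)\le m/2+1-r/2$ directly, because when a block of $\tau$ has size $2n>2$ the associated identification of vertices is not a simple cycle-merge, and your per-block heuristic ``each block of size $k\ge 3$ costs $N^{1-k/2}$'' does not survive the interaction between large blocks and the cycle structure of $\gamma$. The paper's mechanism is quite specific and is absent from your plan: first (Proposition~\ref{Proposition: the existence of the pairing sigma}) one constructs a \emph{pairing} $\sigma\le\tau$ with $\#(\sigma\vee\gamma)\le r/2$, refining the large-block partition down to a pair partition at the cost of breaking connectivity into at most $r/2$ pieces; on each piece the Gaussian estimate $t\le m_i/2+2-r_i$ (Theorem~\ref{Theorem: Order of GUE case for epsilon cumulants}) applies; the pieces are then reassembled via the ``quotient graph induced by a graph'' machinery of Section~\ref{SubSection: Graphs induced by graphs} (Lemma~\ref{Lemma: Order of exponent when inducing graph is a tree} and Corollary~\ref{Corollary: Order of exponent when inducing graph is connected}), which tracks exactly how merging graphs along shared vertices affects the two-edge-connected-component count. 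Without these ingredients the $N$-counting step is a hope rather than an argument.

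Your explanation of why $(C_1')$ improves $r!^{3M}$ to $r!^M$ is also off. The extra factorial factors under $(C_1)$ alone do not come from $\ell^2$-versus-$\ell^1$ path sums or Cauchy--Schwarz chains; both regimes use the same operator-norm bound of Lemma~\ref{Lemma: Order of sums associated to deterministic matrices}. The real source is the M\"obius inversion needed to pass from $\sum_{\ker\psi\ge\pi}\mathbf{D}(\psi)$ to $\sum_{\ker\psi=\pi}\mathbf{D}(\psi)$ in~\eqref{Equation: Cumulant expansion}: this inversion produces the factor $\#(\pi)!\,|p(\#(\pi))|$ in Theorem~\ref{Theorem: Order of Wigner case}, which cascades into the extra $m!\,B_m\,|p(m)|$ in Corollary~\ref{Corollary: Upper bound for cumulants of GOE} and hence $(Mr)!^3$ rather than $(Mr)!$. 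Under $(C_1')$ one may bound $|\mathbf{D}(\psi)|\le\mathbf{D}^{\mathrm{abs}}(\psi)$ termwise and sum over $\pi\ge\pi_\tau$ directly, bypassing the inversion entirely; that is what eliminates the two extra factorial factors.
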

\begin{remark} In the case when $\mathbf{C}_1'$ is assumed, instead of $r!^{\frac{M}{2}}$ in the GUE/GOE case, we have $r!^{M}$ here, due to the fact that we are considering general subexponential entries instead of sub-Gaussian ones. Further, the $(\sqrt{N})^{2-r}$ is also essential in general, as we will explain in Example \ref{Example: Order of Wigner case}.  
\end{remark}

If $\mathbf{C}_1-\mathbf{C}_3$ are all assumed, as the variance of $\mathbf{X}$ has a lower bound, the above cumulant bounds still apply to the normalized random  variable $\mathfrak{X}$, by slightly changing the parameters $\theta_1, \theta_2$ and $\theta_3$. Especially, it shows that for any given $r\geq 3$, the $r$-th cumulant of $\mathfrak{X}$ vanishes asymptotically as $N\to \infty$. Hence, we automatically recover the CLTs of $\mathfrak{X}$ in all cases. In addition, as our $r$ could even be  $N$-dependent, we can take a step further to derive the quantitative versions of these CLTs, in light of \cite{DJ}. For the following results we now assume $\mathbf{C}_1-\mathbf{C}_3$. We first recall the Kolmogorov distance of two real random variables $X$ and $Y$,
$$\Delta(X,Y)=\sup_{x\in \mathbb{R}}|P(X\leq x)-P(Y\leq x)|.$$
For the following theorem we may assume $M>1$ as the case $M=1$ is trivial. Let $Z$ be a $\mathcal{N}(0,1)$ variable in the sequel.

\begin{theorem}\label{Theorem: Berry-Esseen bound GUE/GOE Main result 1}
Let $\mathcal{X}\sim GUE$ or GOE. Consider the polynomial $P(\mathcal{X},\mathcal{D})$ in (\ref{def of P}) and the random variable $\mathfrak{X}$ defined in (\ref{object_normalized}). Suppose that $\mathbf{C}_1-\mathbf{C}_3$ are satisfied. Then we have the following three conclusions

(i) ({\it CLT with Cram\'{e}r correction}) For  any $x\in [0,\tilde{\theta}_1 N^{1/(M-1)})$ with sufficiently small constant $\tilde{\theta}_1\equiv \tilde{\theta}_1(K, M, \mathfrak{c},t)>0$,
we have 
\begin{align}
\mathbb{P}(\mathfrak{X}\geq x) = e^{{L(x)}}\mathbb{P}(Z\geq x)\left( 1+ O\left( \frac{x+1}{N^{1/(M-1)}}\right) \right).
\label{Cramer correction}
\end{align}
Here ${L(x)}$ satisfies $$|{L(x)}|= O(x^3/N^{1/(M-1)}). $$

(ii) ({\it Berry-Esseen bound}) There exists a constant $C_{1}\equiv C_1(K,M,\mathfrak{c},t)>0$, such that
$$\Delta(\mathfrak{X},Z)\leq \frac{C_{1}}{N^{1/(M-1)}}.$$

(iii) ({\it Concentration}) There exist constants $C_2\equiv C_2(K, M, \mathfrak{c},t)>0$, $\hat{\theta}_1\equiv \hat{\theta}_1(K,M,\mathfrak{c},t)>0$,  and a constant $A>0$ that does not depend on $K, M, \mathfrak{c}, t$, such that
\begin{align}
\mathbb{P}(\mathfrak{X}\geq x) \leq A \exp\left( -\frac{1}{2}\frac{x^2}{C_2+x^{2-\frac{2}{M}}(\hat{\theta}_1N)^{-\frac{2}{M}}}\right)
\label{Concentration inequality}
\end{align}
for all $x\geq 0$.
\end{theorem}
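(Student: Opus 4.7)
The strategy is to reduce all three conclusions to off-the-shelf consequences of the abstract cumulant method for quantitative CLTs (Saulis--Statulevi\v{c}ius type results, as packaged in \cite{DJ,SS12}), treating Theorem \ref{Theorem: Upper bound of cumulants GUE} as a black-box input. Under $C_1$--$C_4$, $\mathbf{X}$ is real-valued (by $C_3$), centered, and satisfies $\mathrm{Var}(\mathbf{X}) = 1$ uniformly in $N$ (by $C_2$ and $C_4$). Theorem \ref{Theorem: Upper bound of cumulants GUE} then rewrites as
\[|\C_r(\mathbf{X})| \leq \frac{(r!)^{1+\gamma}}{\Delta^{r-2}}, \qquad r \geq 3,\]
with $\gamma \vcentcolon = M/2 - 1 \geq 0$ (using $M > 1$) and $\Delta \vcentcolon = \theta_1 N$. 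This is precisely the input shape required by the Saulis--Statulevi\v{c}ius framework.

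I would then invoke, in sequence, the three classical consequences of cumulant bounds of this form. For (i), the cumulant generating function $L(x) = \sum_{r \geq 3} \C_r(\mathbf{X})\,x^r / r!$ converges absolutely on $|x| \leq c\,\Delta^{1/(1+\gamma)}$, and the Taylor bound gives $|L(x)| = O(x^3/\Delta^{1/(1+2\gamma)})$ on the sub-range $0 \leq x \leq c\,\Delta^{1/(1+2\gamma)}$; the standard exponential-tilting argument comparing the tilted law of $\mathbf{X}$ to $Z$ then delivers the Cram\'er expansion (\ref{Cramer correction}), with the range $[0,\tilde{\theta}_1 N^{1/(M-1)})$ obtained from the identity $1+2\gamma = M-1$. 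For (ii), the Berry--Esseen version of the cumulant method directly yields Kolmogorov-distance bound of order $\Delta^{-1/(1+2\gamma)} = N^{-1/(M-1)}$. For (iii), I would bound $\mathbb{E}e^{\lambda \mathbf{X}}$ via the cumulant bound, apply Markov's inequality, and optimize over $\lambda$; this yields a Bernstein-type tail whose denominator takes the form $C_2 + x^{2 - 1/(1+\gamma)}\Delta^{-1/(1+\gamma)}$, and substituting $1/(1+\gamma) = 2/M$ produces exactly (\ref{Concentration inequality}).

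I do not anticipate any substantive obstacle beyond parameter bookkeeping: the hard analytic content is fully absorbed into Theorem \ref{Theorem: Upper bound of cumulants GUE}, and the translations from cumulant bounds to quantitative CLTs are classical. The only minor points meriting care are (a) ensuring that the constants in $\tilde{\theta}_1, C_1, C_2$ depend only on $(K, M, c, t)$, which is inherited from the corresponding dependence of $\theta_1$ in Theorem \ref{Theorem: Upper bound of cumulants GUE}; and (b) verifying that the normalization $\mathrm{Var}(\mathbf{X}) = 1$ holds with an error negligible relative to the Cram\'er regime, so that the second-order term does not appear as an obstruction in the expansion (\ref{Cramer correction}). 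Both points are immediate from the hypotheses $C_2$ and $C_4$, since $\mathrm{Var}(\mathrm{Tr}\,P(\mathcal{X},\mathcal{D}))$ has a nonzero limit bounded away from $0$ and depending only on the polynomial data.
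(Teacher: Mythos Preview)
Your proposal is correct and matches the paper's proof essentially line for line: verify that Theorem~\ref{Theorem: Upper bound of cumulants GUE} gives the Statulevi\v{c}ius condition $(S_\gamma)$ with $\gamma = M/2 - 1$ and $\Delta = \theta_1 N$, then invoke the three off-the-shelf theorems from \cite{DJ} for parts (i)--(iii). Two inconsequential remarks: in (i) the function $L(x)$ in the Saulis--Statulevi\v{c}ius theory is the Cram\'er--Petrov series rather than the raw cumulant generating function you wrote, and your concern (b) is moot since $\mathrm{Var}(\mathbf{X}) = 1$ holds exactly by definition of $\mathbf{X}$, not merely asymptotically.
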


\begin{theorem}\label{Theorem: Berry-Esseen bound Wigner}
Let $\mathcal{X}\sim Wigner$. Consider the polynomial $P(\mathcal{X},\mathcal{D})$ in (\ref{def of P}) and the random variable $\mathfrak{X}$ defined in (\ref{object_normalized}). Suppose that $\mathbf{C}_1-\mathbf{C}_3$ are satisfied. Then we have the following three conclusions.

(i)({\it CLT with Cram\'{e}r correction}) For any $x\in [0,\tilde{\theta}_2 N^{1/(6M-1)})$ for sufficiently small constant $\tilde{\theta}_2\equiv \tilde{\theta}_2(K, M, \mathsf{C}, \mathfrak{c},t)>0$,
we have 
\begin{align}
\mathbb{P}(\mathfrak{X}\geq x) = e^{{L(x)}}\mathbb{P}(Z\geq x)\left( 1+ O\left( \frac{x+1}{(\sqrt{N})^{1/(6M-1)}}\right) \right).
\label{Cramer correction}
\end{align}
Here ${L(x)}$  satisfies $$|{L(x)}|= O(x^3/(\sqrt{N})^{1/(6M-1)}). $$ 
If $\mathbf{C}_1'$ is additionally assumed, the parameter $6M$ can be improved to $2M$ in all above formulas.

(ii) ({\it Berry-Esseen bound}) There exist a constant $C_1 \equiv C_1(K,M,\mathsf{C}, \mathfrak{c},t)$ such that,
$$\Delta(\mathfrak{X},Z)\leq \frac{C_{1}}{(\sqrt{N})^{1/(6M-1)}}.$$
If $\mathbf{C}_1'$ is additionally assumed, the parameter $6M$ can be improved to $2M$ in the above formula. 

(iii) ({\it Concentration}) There exist constants $C_2\equiv C_2(K, M, \mathsf{C}, \mathfrak{c}, t)>0$, $\hat{\theta}_2\equiv \hat{\theta}_2(K,M,\mathsf{C}, \mathfrak{c},t)>0$ and a constant $A>0$ that does not depend on $K, M, \mathsf{C}, \mathfrak{c}, t$ such that,
\begin{align}
\mathbb{P}(\mathfrak{X}\geq x) \leq A \exp\left( -\frac{1}{2}\frac{x^2}{C_2+x^{2-\frac{1}{3M}}(\hat{\theta}_2\sqrt{N})^{-\frac{1}{3M}}}\right)
\label{Concentration inequality}
\end{align}
for all $x\geq 0$. If $\mathbf{C}_1'$ is additionally assumed, the parameter $3M$ can be improved to $M$ in the above formula. 
\end{theorem}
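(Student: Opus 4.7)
The plan is to derive all three conclusions from the cumulant bounds in Theorem \ref{Theorem: Upper bound of cumulants Wigner} by invoking the abstract cumulant-to-CLT framework developed in \cite{DJ, SS12}. The general principle is that whenever a centered, unit-variance random variable satisfies $|K_r(\mathbf{X})| \leq (r!)^{1+\gamma}/\Delta^{r-2}$ for all $r\geq 3$, one automatically obtains (i) Cram\'er-type corrections on a range of size $\Delta^{1/(1+2\gamma)}$, (ii) a Kolmogorov distance to $\mathcal{N}(0,1)$ of order $\Delta^{-1/(1+2\gamma)}$, and (iii) a concentration inequality whose large-deviation tail behaves like $\exp(-c(x\Delta)^{1/(1+\gamma)})$. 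In the Wigner setting, (\ref{Bound og Wigner}) provides $\gamma = 3M - 1$ and $\Delta = \theta_2\sqrt{N}$, so $1 + 2\gamma = 6M - 1$ and $1+\gamma = 3M$, which accounts exactly for the exponents $1/(6M-1)$ and $1/(3M)$ appearing in the statement; under the additional assumption $C_1'$, (\ref{Bound og Wigner Special case}) gives $\gamma = M - 1$, and these exponents improve to $1/(2M-1)$ and $1/M$, respectively.

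For (i), I would implement the standard saddle-point / tilted-measure argument. Expand $\log\mathbb{E}[e^{s\mathbf{X}}] = s^2/2 + \sum_{r\geq 3}K_r(\mathbf{X})s^r/r!$ and truncate at an order $R \asymp (\Delta/s)^{1/\gamma}$ so that the tail of the series is controlled via Stirling's estimate on $(r!)^\gamma$. Then follow Cram\'er's classical third-order expansion to obtain the multiplicative correction $e^{L(x)}\mathbb{P}(Z\geq x)$, with the validity range $x \lesssim \Delta^{1/(1+2\gamma)}$ dictated by the requirement that the truncation removes only a negligible contribution, and with $|L(x)| = O(x^3/\Delta^{1/(1+2\gamma)})$ coming from the cubic correction of Cram\'er's series. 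Part (ii) is then an immediate corollary: within the Cram\'er regime the Gaussian approximation has error $O(\Delta^{-1/(1+2\gamma)})$, while beyond that threshold both $\mathbb{P}(Z\geq x)$ and $\mathbb{P}(\mathbf{X}\geq x)$ are separately of this order, the latter bound coming from (iii).

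For (iii), I would run an exponential Markov argument with the same truncation, writing
\[
\log\mathbb{E}[e^{s\mathbf{X}}] \leq \frac{s^2}{2} + \sum_{r=3}^{R}(r!)^\gamma \left(\frac{s}{\Delta}\right)^r \Delta^2 + (\text{negligible tail}),
\]
and optimizing $s>0$ in $\mathbb{P}(\mathbf{X}\geq x)\leq e^{-sx+\log\mathbb{E}[e^{s\mathbf{X}}]}$ to balance the linear term $sx$ against the truncated polynomial sum. In the small-deviation regime the quadratic term $s^2/2$ dominates and yields the Gaussian factor $\exp(-x^2/(2C_2))$; in the large-deviation regime the truncated polynomial correction dominates and produces the sub-exponential factor $x^{2-1/(1+\gamma)}\Delta^{-1/(1+\gamma)}$ in the denominator of (\ref{Concentration inequality}), which under $C_1'$ sharpens to $x^{2-1/M}\Delta^{-1/M}$ with $\Delta=\theta_3\sqrt{N}$.

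The main obstacle is bookkeeping rather than new probabilistic input: all three parts essentially plug Theorem \ref{Theorem: Upper bound of cumulants Wigner} into the Saulis--Statulevicius / D\"oring--Jentsch machinery. The delicate points are to ensure the constants $\tilde\theta_2, C_1, C_2$ depend only on $K, M, c, t$ (which follows because the cumulant bound is already uniform in $N$ with precisely these parameters), and to verify that the improved exponent under $C_1'$ propagates cleanly into each of the three conclusions rather than appearing in only one of them.
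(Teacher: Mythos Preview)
Your proposal is correct and follows essentially the same route as the paper: plug the cumulant bounds from Theorem~\ref{Theorem: Upper bound of cumulants Wigner} into the Statulevi\v{c}ius framework of \cite{DJ,SS12}, identifying $\gamma=3M-1$, $\Delta=\theta_2\sqrt{N}$ (and $\gamma=M-1$ under $C_1'$), which yields exactly the exponents $1/(6M-1)$ and $1/(3M)$ (resp.\ $1/(2M-1)$ and $1/M$). The only difference is that the paper simply invokes the conclusions of \cite{DJ} as black boxes, whereas you additionally sketch the saddle-point/tilted-measure and exponential-Markov arguments underlying those results; this extra detail is not needed but is not wrong.
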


\begin{remark} We notice that the Berry-Esseen bound can also be viewed as a simple consequence of the CLT with Cram\'{e}r correction. But we also single  it out as it is a widely used distance for quantitative CLT. It is known that the cumulant bounds generally do not provide optimal Berry-Esseen bounds, even if the cumulant bounds themselves are optimal; see \cite{DJ} for instance. However, it is noteworthy that such Berry-Esseen bounds have only been obtained for a single random matrix model in the previous literature. Furthermore, the true power of the cumulant bounds can be seen in the CLTs with Cram\'{e}r corrections and the concentration inequalities, which not only capture the Gaussian nature in the small deviation regime, but also the tail of the LES in the moderate and large deviation regimes. Such quantitative characterization of the law of 
$\mathfrak{X}$ is not available from a Berry-Esseen type bound.  

We would like to further compare our concentration result with the existing result on a single Wigner matrix $X$ whose entries satisfy log-Sobolev inequality in \cite{AW15}. In \cite{AW15}, a nearly optimal concentration inequality was obtained for $\text{Tr} f(X)$ when $f$ is not necessarily Lipschitz. Especially, Proposition 3.7 of \cite{AW15} implies that if $f(x)$ is a quadratic polynomial, the concentration tail  of $\text{Tr} f(X)$ is $A\exp(-\delta x^2\wedge Nx)$ for some constant $\delta>0$, which exactly matches our bound in (\ref{Concentration inequality}) when $M=2$. When $M>3$,   in \cite{AW15} it is claimed that a concentration tail of the form $A\exp (-\delta x^2\wedge Nx^{\frac{2}{M}})$ should be valid, while our bound in (\ref{Concentration inequality}) reads $A\exp (-\delta x^2\wedge N^{\frac2M}x^{\frac{2}{M}})$, and thus is weaker in terms of the $N$-factor, but our result is for arbitrary polynomial in random matrices.  We emphasize that similarly to the case of Berry-Esseen bound,  this suboptimality is due to the nature of the cumulant method in \cite{DJ},  even if the cumulant bounds are optimal. For the general Wigner case,  when $\mathbf{C}_1'$ is assumed, the concentration bound in Theorem \ref{Theorem: Berry-Esseen bound Wigner} essentially reads $A\exp(-\delta x^2\wedge N^{\frac{1}{2M}} x^{\frac{1}{M}})$. We believe that the factor $x^{\frac{1}{M}}$ is optimal as now we are considering subexponential entries rather than sub-Gaussian ones as in \cite{AW15}.  
\end{remark}

\begin{remark} Finally, we remark that the constant-order lower bound on $\text{Var}(\text{Tr}(P(\mathcal{X},\mathcal{D}))$ in condition $\mathbf{C}_2$ can be weakened to a lower bound of the form $\mathfrak{c}\equiv \mathfrak{c}_N=N^{-s}$ for some sufficiently small $s>0$. With the cumulant bounds for $\mathbf{X}$ in Theorems \ref{Theorem: Upper bound of cumulants GUE} and \ref{Theorem: Upper bound of cumulants Wigner}, one can easily verify that the mean-0, variance-1 normalized random variable $\mathfrak{X}$ still has negligible high-order cumulants when $r$ is fixed, as long as $s$ is sufficiently small. Hence, a CLT or quantitative CLT can still be derived. In this case, the convergence rates in the quantitative laws will also require modification. We leave such an extension to interested readers. 
\end{remark}

\subsection{Proof Strategy}\label{Subsection: Proof strategy}

We approach first the GUE/GOE case for which we study first the quantities
\begin{equation}\label{Aux: Initial quantities proof strategy}
\C_r(Tr(X_{i_1}D_{j_1}\cdots X_{i_{m_1}}D_{j_{m_1}}),\dots, Tr(X_{i_{m_1+\cdots+m_{r-1}+1}}D_{i_{m_1+\cdots+m_{r-1}+1}}\cdots X_{i_{m_1+\cdots+m_{r}}}D_{j_{m_1+\cdots+m_{r}}})),
\end{equation}
for an arbitrary choice of $r$ and deterministic and random matrices. The only assumption that we impose so far is $\mathbf{C}_1$. We will first prove that the last quantity can be rewritten as a double sum over partitions,
\begin{equation}\label{Equation: sumation over double set}
N^{-m/2}\sum_{\pi\in \cP(\pm[m])}\sum_{\substack{\tau\in \cP(m) \\ \tau\vee\gamma=1_m}}\left[\sum_{\substack{\psi: \pm[m]\rightarrow [N] \\ ker(\psi)=\pi}}\mathbf{D}(\psi)\right]\C_\tau(x_{\psi^\pi(1),\psi^\pi(-1)}^{(i_1)},\dots,x_{\psi^\pi(m),\psi^\pi(-m)}^{(i_m)}),
\end{equation}
where $m=\sum_i m_i$, $\pm[m]=\{-1,1,\dots,-m,m\}$, $[m]=\{1,\dots,m\}$ and $ker(\psi)$ is the partition of $\cP(\pm [m])$ determined by $u$ and $v$ being in the same block whenever $\psi(u)=\psi(v)$. Here $1_m$ is the partition with a single block and $\tau\vee\gamma$ is the smallest partition greater than or equal to $\tau$ and $\gamma$ with respect to the partial order $\leq$ defined in Section \ref{Section_Graph theory and combinatorics}. With this expression the  $\C_\tau(\cdot)$ term is the mixed cumulant of the random variables and the sum of $\mathbf{D}(\psi)$ terms determines the $N$-order of each summand in the sum over partitions. In the GUE case, thanks to the fact  that the cumulants vanish except for the second cumulant $\C_2(x_{i,j},x_{j,i})$, we are able to rewrite (\ref{Equation: sumation over double set}) as
\begin{equation}\label{Equation: sumation over single set}
N^{-m/2}\sum_{\substack{\tau\in \cP_2(\pm[m]) \\ \tau\vee\gamma=1_m}}\left[\sum_{\substack{\psi: \pm[m]\rightarrow [N] \\ ker(\psi)\geq \pi_\tau}}\mathbf{D}(\psi)\right],
\end{equation}
where $\pi_\tau$ is a pairing of $\pm [m]$ determined by $\tau$. This permits us to associate to each partition $\pi_\tau$ a graph $D^{\pi_\tau}$. The graph $D^{\pi_\tau}$ determines the $N$-order from its number of two-edge connected components (see \cite{MSBounds}). In the GUE case we observe that these graphs are all disjoint cycles and the $N$-order is given by its number of cycles. 
%To determine the $N$-order of any partition $\pi\geq \pi_\tau$ we show the fact that the $N$-order is reversed. That is, the $N$-order of $\pi$ is less than or equal to the $N$-order of $\pi_\tau$ (see Lemma \ref{Lemma: Order doesn't increase when doing quotient}). 
Thus we reduce the problem to determining the number of cycles of the graphs $D^{\pi_\tau}$ for each pairing $\tau$. We prove that the number of cycles of these graphs is at most $m/2+r-2$. To achieve this, we observe that these graphs can be seen as quotient graphs (see Definition \ref{Definition: Quotient graph}) of a graph consisting of $r$ cycles. Naturally, counting the number of cycles for an arbitrary $r$ might be a tough task. However, we observe that there is a relation between the number of cycles of the quotient graph $D^{\pi_\tau}$ and a quotient graph $(D^{\prime})^{\pi_\tau^\prime}$ where $D^\prime$ is a graph consisting of $r-1$ cycles. More precisely, we prove that both graphs have the same number of cycles. Therefore, to prove that the number of cycles of $D^{\pi_\tau}$ is at most $m/2+r-2$, we first attack the case $r=1$ and then use induction to obtain the upper bound of the number of cycles for any $r$. Let us remark that in order to prove the case $r=1$, we apply another induction on $m$. For the induction on $m$, our strategy is to define a graph $D^\prime$ consisting of a cycle with $2m-2$ edges that can be constructed from the graph $D$ which consists of a cycle with $2m$ edges. Then we prove that the number of cycles of $D^{\pi_\tau}$ is less than or equal to the number of cycles of $(D^\prime)^{\pi_\tau^\prime}$ minus $1$. From this observation our induction proof follows.  Once our {\it double induction process} is completed, we finally prove that whenever the term $\C_\tau(x_{\psi^\pi(1),\psi^\pi(-1)}^{(i_1)},\dots,x_{\psi^\pi(m),\psi^\pi(-m)}^{(i_m)})$ is not zero,
\begin{equation}\label{Equation: Order of inner summation}
N^{-m/2-r+2}\sum_{\substack{\psi: \pm[m]\rightarrow [N] \\ ker(\psi)\geq \pi_\tau}}\mathbf{D}(\psi)=O(1).
\end{equation} 

From the estimate (\ref{Equation: Order of inner summation}), we are able to provide upper bounds for the quantities of the form ($\ref{Aux: Initial quantities proof strategy}$). Under the GUE setting, we go one step further, to prepare the proof for the GOE case. Again, we estimate the upper bounds for the quantities of the form (\ref{Aux: Initial quantities proof strategy}), but  now we allow $X_{i_j}$ and its transpose $X_{i_j}^{\top}$ to be both in the polynomial. We call these type of quantities the \textit{$\epsilon$-cumulants}, where $\epsilon$ is a vector indicating whether or not the transpose is taken to each GUE variable. We show that the $\epsilon$-cumulants admit the same upper bound as the regular cumulants described in (\ref{Aux: Initial quantities proof strategy}). We are then able to provide upper bounds for the quantities (\ref{Aux: Initial quantities proof strategy}) when $X_{i_j}\sim GOE$ via the observation that a GOE matrix $X$ can be written as $X= (Z+Z^{\top})/\sqrt{2}$, where $Z\sim GUE$. Then by the multi-linearity of the cumulants, getting an upper bound for (\ref{Aux: Initial quantities proof strategy}) in the GOE case can be done by getting the corresponding upper bounds of the $\epsilon$-cumulants
\begin{equation}\label{Aux: Initial quantities proof strategy epsilon case}
\C_r(Tr(X_{i_1}^{\epsilon(1)}D_{j_1}\cdots X_{i_{m_1}}^{\epsilon(m_1)}D_{j_{m_1}}),\dots, Tr(X_{i_{m_1+\cdots+m_{r-1}+1}}^{\epsilon(m_1+\cdots +m_{r-1}+1)}D_{i_{m_1+\cdots+m_{r-1}+1}}\cdots X_{i_{m_1+\cdots+m_{r}}}^{\epsilon(m_1+\cdots +m_r)}D_{j_{m_1+\cdots+m_{r}}})),
\end{equation}
where $\mathcal{X}\sim GUE$ and $\epsilon_i\in \{1,{\top}\}$ describes whether we consider $X_i$ or $X_i^{\top}$.

Further, thanks to the multi-linearity of the cumulants and the vanishing of mixed cumulants property (\cite[Theorem 11.32]{NS}),  our bounds can be extended to any polynomial in $\mathcal{X}$ and $\mathcal{D}$, and thus the random variable $\mathbf{X}$.

When we turn to the general Wigner case, unlike the Gaussian case, the cumulants admit  distinct $N$-orders. We illustrate this fact with Example \ref{Example: Order of Wigner case}. We observe that the order of the quantity (\ref{Aux: Initial quantities proof strategy}) for a polynomial in general Wigner and deterministic matrices is $N^{1-r/2}$ in general. Due to the nature of the cumulants $\C_\tau(\cdot)$ that appear in (\ref{Equation: sumation over double set}) in the Wigner case, we are no longer able to simplify (\ref{Equation: sumation over double set}) to the form (\ref{Equation: sumation over single set}). Hence we turn our attention back to (\ref{Equation: sumation over double set}) and provide an upper  bound for the quantity
$$\left|\sum_{\substack{\psi: \pm [m]\rightarrow [N] \\ ker(\psi)=\pi}}\mathbf{D}(\psi)\right|$$
in terms of the quantity of the form (\ref{Equation: sumation over single set}) using the M\"{o}bius inversion theorem. As in the Gaussian case, the bound of the above quantity is ultimately determined by the number of two-edge connected components of the graph $D^\pi$. Our major idea to handle the general Wigner case is to reduce the more involved combinatoric structures into those simple structures existing in the  GUE case. Specifically, our aim is to prove that the number of two-edge connected components of the graph $D^{\pi}$ is at most $m/2+1-r/2$. Due to the nature of the cumulants of the Wigner case, the partitions $\pi$ are way more involved than those of the Gaussian case, and hence the graph $D^{\pi}$ has a more complicated structure. However, we prove that the graph $D^{\pi}$ can be regarded as the union of graphs associated with the Gaussian case. Here by union we specifically mean that the graphs are merged along their edges (by identifying the two pair of vertices of the edges). Therefore the number of two edge-connected components of the resulting graph can be deduced from those of the individual graphs associated with the Gaussian case. Thanks to the previous argument and results for the Gaussian case, we are then allowed to obtain an upper bound for the number of two-edge connected components of the graphs associated with the Wigner case. In order to formally define the meaning of two graphs being merged along their edges we introduce a new concept called  {\it quotient graphs induced by graphs} (see Definition \ref{Definition: Quotient graph induces by graph}). Roughly speaking, we introduce a new graph, $T$. This graph will encode the information of how the graphs are merged along their edges. This new notion permits us to describe the topological properties of the resulting graph in terms of the topological properties of the individual ones via the properties of the encoding graph $T$. A more precise definition with examples and proofs can be found in Section \ref{SubSection: Graphs induced by graphs}. We are then  able to provide upper bounds for any polynomial of Wigner and deterministic matrices under condition $\mathbf{C}_1$.

After establishing the upper bounds of $K_r(\mathbf{X})$ for all $r$ in Theorems \ref{Theorem: Upper bound of cumulants GUE}-\ref{Theorem: Upper bound of cumulants Wigner}, we then apply them to derive the quantitative laws in Theorems \ref{Theorem: Berry-Esseen bound GUE/GOE Main result 1}-\ref{Theorem: Berry-Esseen bound Wigner}, by the method of cumulants. We refer to \cite{DJ} for a rather comprehensive survey of this approach to the quantitative laws. Actually, all the bounds in Theorems \ref{Theorem: Upper bound of cumulants GUE}-\ref{Theorem: Upper bound of cumulants Wigner} can be viewed as the so-called Statulevi$\check{c}$ius type bound for cumulants. The quantitative laws in Theorems \ref{Theorem: Berry-Esseen bound GUE/GOE Main result 1}-\ref{Theorem: Berry-Esseen bound Wigner} then follow by using the conclusions in \cite{DJ}.

% {\color{red}[we need to decide if this part is still needed]\color{blue}[just took it away that part]}
%Finally, during our exploration of the order of the quantities  in (\ref{Aux: Initial quantities proof strategy}), we observe that in the GUE case we are able to describe the leading order for fixed $r$ in terms of the set of non-crossing pairings (see Section \ref{Section_Graph theory and combinatorics} for a definition). Thus, we are able to describe the large $N$-limit of these quantities for any fixed $r$ (see Theorem \ref{Theorem: Limit of cumulants for GUE and deterministic}). Our results shows that the large $N$-limit is described in terms of non-crossing pairings $\tau$ that contain the information of the cumulants of $x_{i,j}$ (the entries of the GUE model) and its Kreweras complement $Krew(\tau)$ that contain the information of the deterministic matrices. 

\subsection{Organization}

The rest of the paper is organized as follows. In Section \ref{Section_Graph theory and combinatorics}, we introduce the necessary notions and derive preliminary lemmas for the combinatorial tools. In Section \ref{Section_Genus expansion of the cumulants}, we introduce the general formula for the expansion of the cumulants.  In Section \ref{Upper bounds for t}, we derive the bounds for a function $t(\cdot)$ which will be the key input for the bounds of the mixed cumulants of the traces of matrix monomial proved in  Section \ref{Section: The order of GUE}.  In Section \ref{Section: The Statulevicius condition}, we prove our main theorems.

\section{Quotient graphs and partitions}\label{Section_Graph theory and combinatorics}

In this section, let us introduce some notions on combinatorics that will be used for the proof of our main results. First we introduce the set of partitions. Second, we recall some general graph theory. Inspired by \cite{MSBounds},   we then turn our attention to the concept of quotient graphs, which has also been used in some later works such as  \cite{M, MMPS}.  Finally in Subsection \ref{SubSection: Graphs induced by graphs},  we introduce a new concept called  {\it quotient graphs induced by graphs} that will be used to prove our main results in the Wigner case.

\subsection{The set of partitions}

By a partition of a set $A$, we mean a collection of disjoint subsets of $A$ whose union is $A$. We call the subsets the \textit{blocks} of the partition. We denote the set of partitions of the set $A$ by $\cP(A)$. If $B$ is a block of $\pi\in \cP(A)$, we use the notation $B\in \pi$. We denote the number of blocks of a partition $\pi$ by $\#(\pi)$. When all blocks of the partition have size $2$, we call $\pi$ a pairing. The set of pairings will be denoted by $\cP_2(A)$. For a partition $\pi\in\cP(A)$ and an element $v\in A$, we denote by $[v]_{\pi}$ the block of $\pi$ that contains $v$.

We consider the partial order $\leq$ in $\cP(A)$ given by $\pi\leq \sigma$ if any block of $\pi$ is contained in a block of $\sigma$. Equipped with this partial order $\cP(A)$ becomes a POSET (partially ordered set).  With this partial order, the largest element, denoted by $1_A$, is the partition that consists of a single block.  And the smallest element, denoted by $0_A$, is the partition that consists of $|A|$ blocks of singleton.

The \textit{join} of two partitions $\pi$ and $\sigma$, denoted by $\pi\vee\sigma$, is the smallest partition that is greater than or equal to $\pi$ and $\sigma$. When the set $A=[m]\vcentcolon =\{1,\dots,m\}$,  we simply use the notation $\cP(m)$ to denote $\cP([m])$.

The following proposition will be used in the derivation of the upper bound for the cumulants of $\mathbf{X}$. Despite the fact that it will be needed until Section \ref{s. Wigner case}, we prefer to state and prove it in this section, as it constitutes a purely theoretical result on partitions.

\begin{proposition}\label{Proposition: the existence of the pairing sigma}
Let $r\geq 2$ and $m_1,\dots,m_r\in\mathbb{N}$. Let $m=\sum_{i=1}^r m_i$. Let $\gamma\in \cP(m)$ be the partition whose blocks are
\begin{align*}\{m_1+\cdots+m_{j-1}+1,\dots,m_1+\cdots+m_j\},\qquad 1\leq j\leq r
\end{align*}
with the convention $m_0=0$. Let $\tau\in \cP(m)$ be a partition such that each block of $\tau$ has even size and $\gamma\vee\tau=1_m$. Then there exists a pairing $\sigma\leq \tau$ such that $\#(\sigma\vee\gamma)\leq r/2$.
\end{proposition}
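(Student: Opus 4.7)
The plan is to construct $\sigma$ by running an Eulerian circuit on an auxiliary bipartite graph. First, I would encode the data as the bipartite multigraph $G'$ on vertex set $[r] \sqcup \tau$: for every element $i \in [m]$, place an edge between the $\gamma$-block containing $i$ (viewed as a vertex in $[r]$) and the $\tau$-block $B$ containing $i$ (viewed as a vertex in $\tau$). A pairing $\sigma \leq \tau$ is then exactly the choice, at every $\tau$-vertex $u$, of a perfect matching of the $|B_u|$ edges of $G'$ incident to $u$. This data determines a multigraph $H_\sigma$ on $[r]$ whose edges are the pairs of $\sigma$ (each pair $\{a,b\}$ yields an edge between $[a]_\gamma$ and $[b]_\gamma$, a loop if they coincide), and one checks directly that $\#(\sigma \vee \gamma)$ equals the number of connected components of $H_\sigma$, which I denote $c(H_\sigma)$. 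Moreover, $\gamma \vee \tau = 1_m$ is equivalent to $G'$ being connected, every $\tau$-vertex has even degree by hypothesis, and every $[r]$-vertex $j$ has degree $m_j \geq 1$.

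Next, I would augment $G'$ by adding ``virtual'' edges between $[r]$-vertices so that every vertex has even degree. Since $\sum_j m_j = m$ is even, the number $o$ of $[r]$-vertices with odd $m_j$ is even, so one adds exactly $v_{\mathrm{virt}} := o/2$ virtual edges forming an arbitrary perfect matching on these vertices. The resulting multigraph $G^+$ is still connected and has all vertices of even degree, hence by Euler's theorem admits an Eulerian circuit $\mathcal{E}$. Because virtual edges never touch $\tau$-vertices, at each of the $|B_u|/2$ visits of $\mathcal{E}$ to a $\tau$-vertex $u$ both the incoming and outgoing edges are $G'$-edges; pairing them over these visits yields a perfect matching of $B_u$. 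I take $\sigma$ to be the union of these matchings over all $u \in \tau$, so by construction $\sigma$ is a pairing with $\sigma \leq \tau$.

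It remains to bound $c(H_\sigma)$. Read off the cyclic subsequence of $[r]$-visits appearing in $\mathcal{E}$. Between two consecutive $[r]$-visits, $\mathcal{E}$ either traverses one virtual edge (producing no $H$-edge) or traverses two $G'$-edges through a single $\tau$-vertex (producing one $H$-edge joining those two $[r]$-visits). Cutting the cyclic subsequence at the $v_{\mathrm{virt}}$ virtual transitions breaks it into $v_{\mathrm{virt}}$ sub-walks (or a single closed walk if $v_{\mathrm{virt}} = 0$). Each sub-walk induces a connected subgraph of $H_\sigma$ on the $[r]$-vertices it visits, and collectively they cover all of $[r]$ because every $m_j \geq 1$. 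Crucially, every $H$-edge arises from a single $\tau$-pass and therefore sits entirely inside one sub-walk, so two sub-walks lie in the same $H_\sigma$-component only by sharing an $[r]$-vertex. It follows that $c(H_\sigma) \leq \max(1, v_{\mathrm{virt}}) = \max(1, o/2) \leq r/2$, using $o \leq r$, which yields the desired inequality $\#(\sigma \vee \gamma) \leq r/2$.

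The step requiring the most care is this final component bound, which hinges on the observation that every $H$-edge remains within a single sub-walk, so that sub-walks interact in $H_\sigma$ only through shared $[r]$-vertices and the total number of sub-walks is exactly $v_{\mathrm{virt}}$. The argument uses only the block sizes of $\gamma$ (not its interval structure) and the evenness of $\tau$-blocks; the assumption $r \geq 2$ enters only to absorb the fallback case $v_{\mathrm{virt}} = 0$, where the whole $[r]$-subsequence is a single closed walk in $H_\sigma$ and $c(H_\sigma) \leq 1 \leq r/2$.
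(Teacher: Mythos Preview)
Your argument is correct and takes a genuinely different route from the paper. The paper proceeds by induction on $r$: it repeatedly splits a block of $\tau$ of size at least four into a pair $\{u,v\}$ and a remainder, continuing until either every block has size two or the modified $\tau'\vee\gamma$ breaks into two pieces $C,D$; it then applies the inductive hypothesis to the restrictions $\tau'_C,\gamma_C$ and $\tau'_D,\gamma_D$, and in the degenerate case where one side has a single $\gamma$-block it reconnects the two resulting pairings by hand using the original block $B$. Your Eulerian-circuit construction is non-inductive and avoids all of this case analysis: encoding the data as the bipartite graph $G'$, repairing parity with $o/2$ virtual edges, and reading $\sigma$ off the circuit at the $\tau$-vertices gives the pairing in one stroke, and the bound $\#(\sigma\vee\gamma)\leq\max(1,o/2)\leq r/2$ drops out from counting the virtual cuts. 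This makes the appearance of $r/2$ completely transparent (it is a parity defect), and the only external input is Euler's theorem; the paper's proof is more elementary in that it stays entirely within the partition language already set up, but it pays for this with a longer argument and several sub-cases.
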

\begin{proof}
We use induction on $r$. The case $r=2$ follows from the fact that $\tau$ must have a block that contains an element from $\{1,\dots,m_1\}$ and an element from $\{m_1+1,\dots,m_1+m_2\}$. Hence, we can choose the pairing $\sigma$ so that it contains a block with one element from each of these sets. It proves $\gamma\vee\sigma=1_m$. Let us assume that the statement is true for any integer less than or equal to $r$ and then  prove it for $r+1$. If any block of $\tau$ has size $2$ then $\sigma=\tau$ satisfies the requirement. So let $B$ be a block of $\tau$ of size at least $4$. Let $\{u,v\}\subset B$ and let $\tau^\prime$ be the partition whose blocks are $B\setminus \{u,v\}, \{u,v\}$ and any block of $\tau$ distinct from $B$. If $\{u,v\}$ and $B\setminus \{u,v\}$ are contained in the same block of $\tau^\prime\vee\gamma$ then $\tau^\prime\vee\gamma=\tau\vee\gamma=1_m$ otherwise $\tau^\prime\vee\gamma$ has two blocks, $C$ and $D$, such that $\{u,v\}\subset C$ and $B\setminus \{u,v\}\subset D$. If $\tau^\prime\vee\gamma=1_m$ and any block of $\tau^\prime$ has size $2$ then we are done, otherwise we repeat the same process until we either have $\#(\tau^\prime\vee\gamma)=2$ or $\tau^\prime$ has only blocks of size $2$. So we may assume $\#(\tau^\prime\vee\gamma)=2$. Let $\tau^\prime_C$ and $\gamma_C$ be the partitions whose blocks are the blocks of $\tau^\prime$ and $\gamma$ contained in $C$ respectively. We similarly define $\tau^\prime_D$ and $\gamma_D$. If $\#(\gamma_C)=\#(\gamma_D)=1$ then $r=2$ which we already proved. So we may assume at least one is greater than $1$. We are reduced to three possible scenarios. Either $\#(\gamma_C)>1$ and $\#(\gamma_D)>1$, or $\#(\gamma_C)=1$ and $\#(\gamma_D)>1$, or $\#(\gamma_C)>1$ and $\#(\gamma_D)=1$. Moreover in either case $\tau^\prime_C\vee\gamma_C=1_C$ and $\tau^\prime_D\vee\gamma_D=1_D$. In the first scenario by induction hypothesis there exist pairings $\sigma_C$ and $\sigma_D$ such that $\sigma_C\leq \tau^\prime_C$, $\sigma_D\leq \tau^\prime_D$ and $\#(\sigma_C\vee\gamma_C)\leq \#(\gamma_C)/2$ and $\#(\sigma_D\vee\gamma_D)\leq \#(\gamma_D)/2$. If $\sigma$ denotes the pairing whose blocks are the blocks of $\sigma_C$ and the blocks of $\sigma_D$ then $\sigma\leq \tau^\prime\leq \tau$ and
$$\#(\sigma\vee\gamma)=\#(\sigma_C\vee\gamma_C)+\#(\sigma_D\vee\gamma_D)\leq \#(\gamma_C)/2+\#(\gamma_D)/2=(r+1)/2.$$
In the second scenario $\gamma_C$ consists of a single block of $\gamma$ and hence every block $\tau^\prime_C$ is contained in a single block of $\gamma$. Let $\sigma_C$ be a pairing such that $\sigma_C\leq \tau^\prime_C$. Thus, all blocks of $\sigma_C$ are contained in the same block of $\gamma$,  in particular $\{u,v\}$. On the other hand, it is clear that $\#(\gamma_D)= r$, so, by induction hypothesis there exists $\sigma_D$ such that $\sigma_D\leq \tau^\prime_D$ and $\#(\sigma_D\vee\gamma_D)\leq r/2$. Let $\{a,b\}\in \sigma_D$ be a block such that $\{a,b\}\subset B\setminus \{u,v\}$. Let $\sigma$ be the pairing whose blocks are the blocks of $\sigma_C$ and the blocks of $\sigma_D$ except $\{u,v\}$ and $\{a,b\}$ for which instead we consider the blocks $\{u,a\}$ and $\{v,b\}$. In this way we have that $\#(\sigma\vee\gamma)=\#(\sigma_D\vee\gamma_D)\leq r/2<(r+1)/2$. Moreover, in either case $\sigma\leq \tau$ because $\{u,a\}$ and $\{v,b\}$ are contained in the block $B$ of $\tau$. Hence, the proof of this case is complete. The last scenario follows exactly as the second one.
\end{proof}

\subsection{Oriented, unoriented and quotient graphs}

By an { unoriented graph} or simply { graph} we mean a pair $(V,E)$ where $V$ is the set of vertices and $E\subset V\times V$ the set of edges where $\{u,v\}\in E$ means that there is an edge connecting $u$ and $v$. We allow multiple edges and loops so that the same pair of vertices might have more than one edge connecting them. An oriented graph is a pair $(V,E)$ where $V$ is the set of vertices and $E\subset V\times V$ is the set of edges where now the order of the vertices matters. We adopt the notation $(u,v)\in E$ which means there is an edge going from $u$ to $v$ but not necessarily there is an edge going from $v$ to $u$. We call $u$ the source of the edge $e$ which we denote by $src(e)$ and $v$ the target of $e$ which we denote by $trg(e)$. We show examples of these basic concepts in Example \ref{Example: Oriented and unoriented graphs}, which will also be used for the introduction of further concepts.

Given two vertices in an unoriented graph $u,v\in V$, a path from $u$ to $v$ is a collection of vertices $u_1,\dots,u_n$ such that $u=u_1,v=u_n$ and there is an edge $e_i=\{u_i,u_{i+1}\}$ of the graph for any $1\leq i\leq n-1$. For a path in our context we will consider all of the vertices $u_1,\dots,u_n$ distinct, as otherwise we will always consider another path whose vertices are all different by deleting the portion of the path in between the first arrival to a vertex and the second arrival to the same vertex. For brevity we use the notation
$$u_1 \overset{e_1}{-} u_2\overset{e_2}{-} u_3 \cdots u_{n-1}\overset{e_{n-1}}{-}u_n$$
to denote the path that connects $u_1$ with $u_n$ along the edges $e_i=\{u_i,u_{i+1}\}, 1\leq i\leq n-1$.

\begin{definition}\label{Definition: Quotient graph}
For an oriented graph $G=(V,E)$, we let $\underline{G}$ be the graph consisting of the same set of vertices and edges but forgetting the orientation. That is, if $(a,b)\in E$, then $\{a,b\}$ is an edge of $\underline{G}$. We call $\underline{G}$ the \textit{unorientation} of $G$. As the sets of vertices and edges remain the same,  we still use the notation $\underline{G}=(V,E)$. See Example \ref{Example: Oriented and unoriented graphs} for an example of an oriented graph and its unorientation.
\end{definition}

\begin{example}\label{Example: Oriented and unoriented graphs}
% We let $G=(V,E)$ be the oriented graph with vertices $V=\{v_1,\dots,v_6\}$ and edges $e_1=(v_1,v_4),e_2=(v_1,v_2),e_3=(v_2,v_3),e_4=(v_3,v_4),e_5=(v_4,v_5),e_6=(v_4,v_5),e_7=(v_3,v_6),e_8=(v_6,v_3),e_9=(v_6,v_6)$ and $e_{10}=(v_6,v_6)$.
% The unorientation of $G$ is the unoriented graph $\underline{G}$ with set of vertices $V$ and edges $e_1=\{v_1,v_4\},e_2=\{v_1,v_2\},e_3=\{v_2,v_3\},e_4=\{v_3,v_4\},e_5=\{v_4,v_5\},e_6=\{v_4,v_5\},e_7=\{v_3,v_6\},e_8=\{v_3,v_6\}, \ab e_9=\{v_6,v_6\}$ and $e_{10}=\{v_6,v_6\}$. The unorientation has the cycle $v_1 \overset{e_1}{-} v_4 \overset{e_4}{-} v_3 \overset{e_3}{-} v_2 \overset{e_2}{-} v_1$. The drawing of these graphs can be seen in Figures \ref{fig:image1} and {fig:image2}. 
In Figures \ref{fig:image1} and \ref{fig:image2}, we show an example of an oriented graph $G$ and its unorientation. We use arrows to represent the oriented edges. Notice that, in the unorientation, we keep the multiplicity of the edges.
\end{example}

\begin{figure}[h!]
    \centering
    \begin{minipage}{0.48\textwidth}
        \centering
        \includegraphics[width=\linewidth]{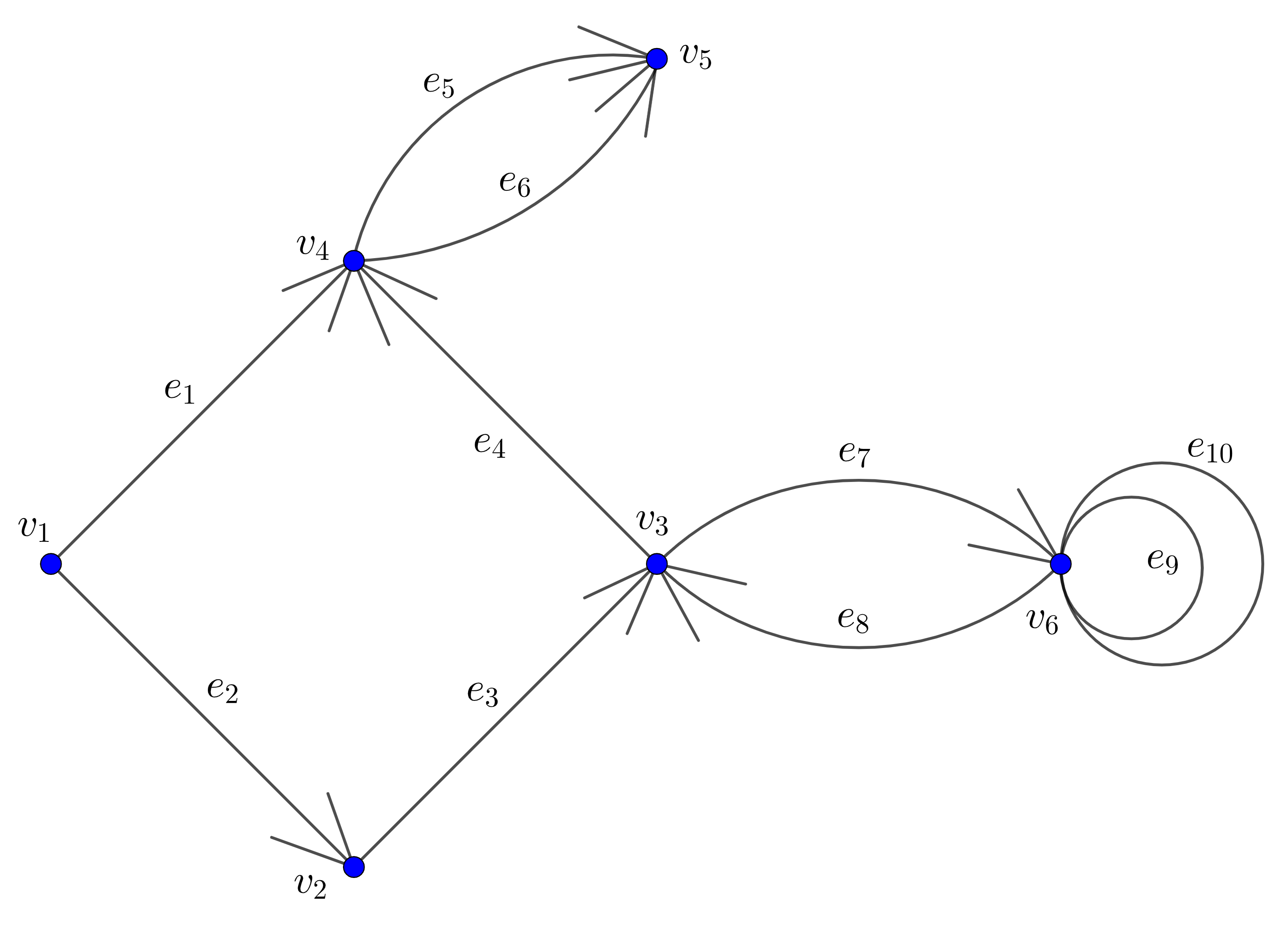} 
        \caption{\text{ Graph $G$}}
         \label{fig:image1}
    \end{minipage}
    \hfill % Adds space between the two minipages
    \begin{minipage}{0.48\textwidth}
        \centering
        \includegraphics[width=\linewidth]{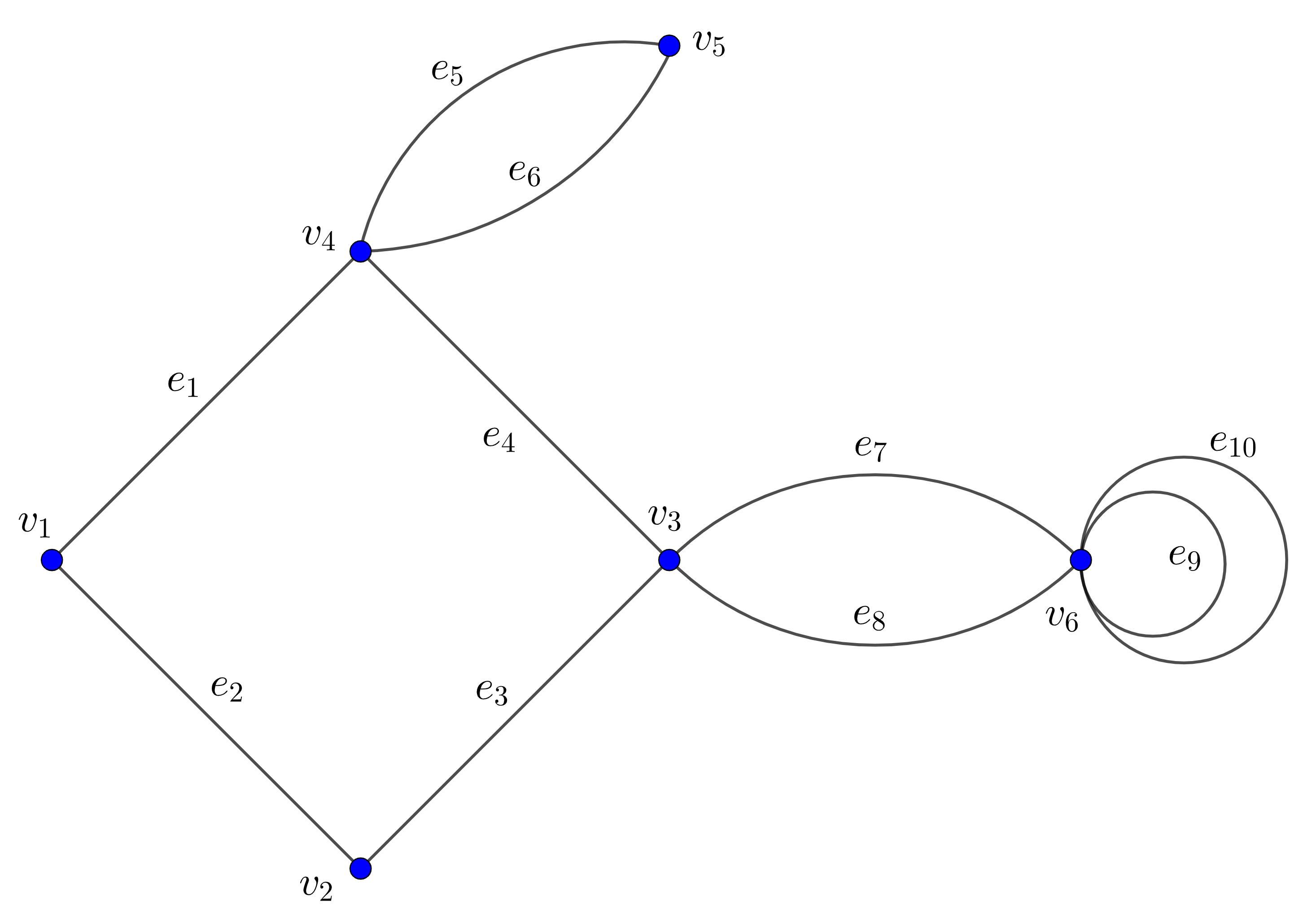}
        \caption{\text{ Unorientation of $G$}}
         \label{fig:image2}
    \end{minipage}
    % \label{Figure: Unoriented_graph}
\end{figure}

% \begin{figure}
%     \centering
%     \includegraphics[width=0.7\textwidth]{Figures/Oriented_graph.png}\\
%     \text{a) The graph $G$}\\
%     \centering
%     \includegraphics[width=0.7\textwidth]{Figures/Unoriented_graph.png}\\
%     \text{b) The unorientation of $G$}
%     \caption{The graphs of Example \ref{Example: Oriented and unoriented graphs}.}
%     \label{Figure: Unoriented_graph}
% \end{figure}

\begin{definition}
Given an oriented (respectively unoriented) graph $G=(V,E)$ and a partition $\pi\in \cP(V)$. We let $G^{\pi}=(V^{\pi},E^{\pi})$  be the oriented (respectively unoriented) graph obtained after identifying vertices of $G$ into the same block of $\pi$. The vertices of $G^{\pi}$ are the blocks of $V$ while its edges are given by $([u]_{\pi},[v]_{\pi})$(respectively $\{[u]_{\pi},[v]_{\pi}\}$) whenever $(u,v)$(respectively $\{u,v\}$) is an edge of $G$. If $e=(u,v)$ is an edge of $G$, we denote by $e^\pi=([u]_\pi,[v]_\pi)$  the corresponding edge of $G^\pi$. We call $G^{\pi}$ the \textit{quotient graph} of the graph $G$ with respect to $\pi$. See Example \ref{Example: Quotient graph} for an example of a quotient graph.
\end{definition}

\begin{example}\label{Example: Quotient graph}
Let $\underline{G}=(V,E)$ be the unoriented graph in Example \ref{Example: Oriented and unoriented graphs}. Let $\pi\in \cP(V)$ be the partition,
$$\{v_1\},\{v_2,v_4\},\{v_3\},\{v_5,v_6\}.$$
The quotient graph $\underline{G}^{\pi}$ is the unoriented graph with set of vertices $\{v_1\},\{v_2,v_4\},\{v_3\},\{v_5,v_6\}$ and edges $e_1^{\pi}=\{\{v_1\},\{v_2,v_4\}\}, e_2^{\pi} =\{\{v_1\},\{v_2,v_4\}\}, e_3^{\pi}=\{\{v_2,v_4\},\{v_3\}\}, e_4^{\pi}=\{\{v_2,v_4\},\{v_3\}\}, e_5^{\pi}=\{\{v_2,v_4\},\ab \{v_5,v_6\}\}, e_6^{\pi}=\{\{v_2,v_4\},\{v_5,v_6\}\}, e_7^{\pi}=\{\{v_3\},\{v_5,v_6\}\}, e_8^{\pi}=\{\{v_3\}, \{v_5,v_6\}\}, e_9^{\pi}=\{\{v_5,v_6\},\{v_5,v_6\}\}$ and $e_{10}^{\pi}=\{\{v_5,v_6\},\{v_5,v_6\}\}$. This quotient graph is presented in Figure \ref{Figure: Quotient graph}. Note that, again, we keep the multiplicity of the edges. 
\end{example}

\begin{figure}
    \centering
    \includegraphics[width=0.6\textwidth]{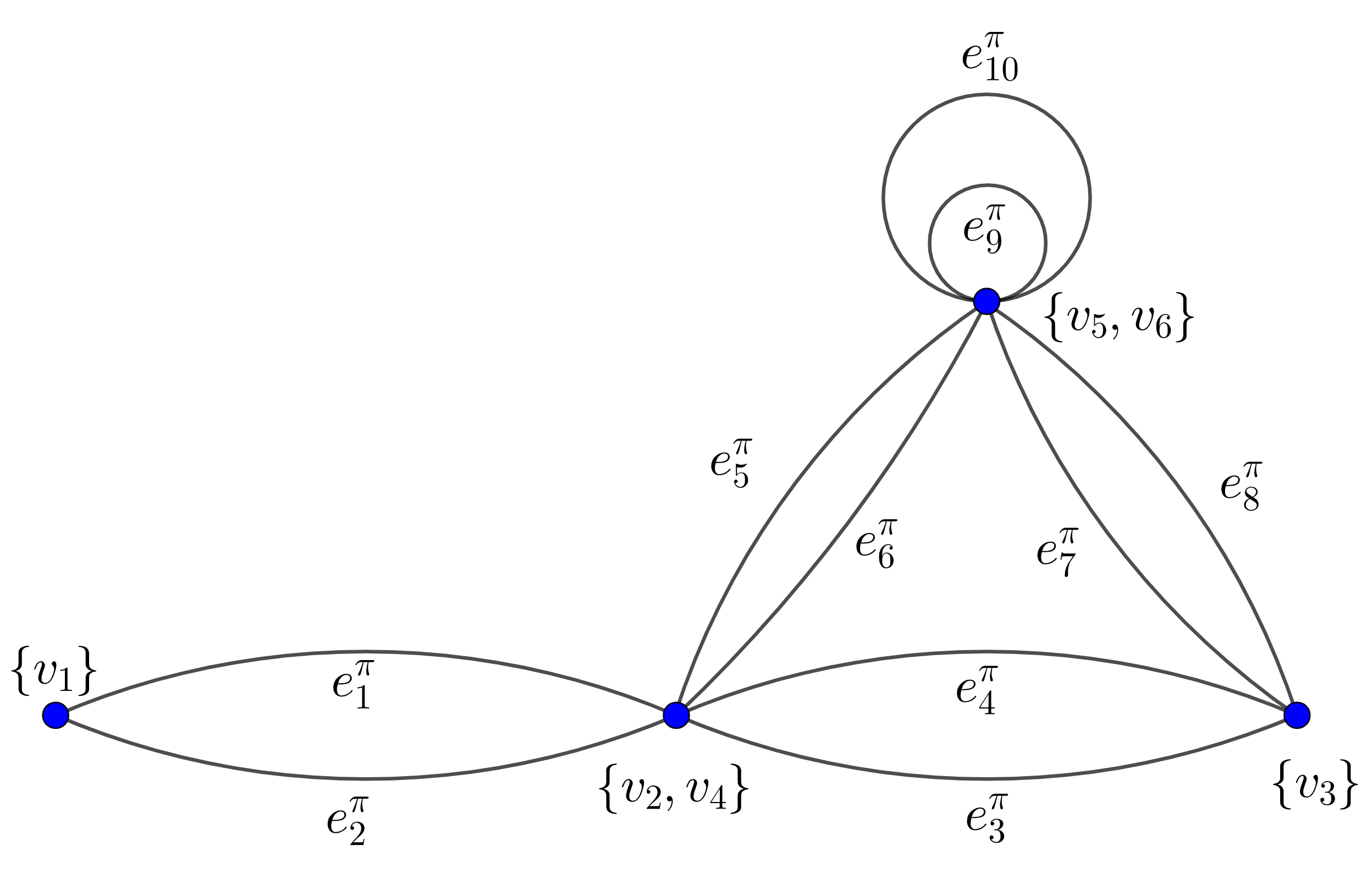}
    \caption{The quotient graph $\underline{G}^{\pi}$ of Example \ref{Example: Quotient graph}.}
    \label{Figure: Quotient graph}
\end{figure}

A \textit{cutting edge} of a graph (either oriented or unoriented) is an edge whose removal increases the number of connected components of the graph. A subgraph is \textit{two edge connected} if it has no cutting edges. A \textit{two edge connected component} of a graph is a subgraph which is two edge connected and it is not contained in a larger subgraph that  is also two edge connected. 

\begin{definition}
For a given graph $G$ (either oriented or unoriented), its graph of two edge connected components, denoted by $\mathcal{F}(G)$, is the unoriented graph whose vertices are the two edge connected components of $G$, say $U$ and $V$, and there is an edge from $U$ to $V$ whenever there is a cutting edge $\{u,v\}$ of $G$ with $u\in U$ and $v\in V$.
\end{definition}

\begin{remark}\label{Remark: The forest of two edge connected components doesnt care about orientation}
For an oriented graph $G$, observe that its cutting edges and two edge connected components are the same as the ones for its unorientation $\underline{G}$. In this sense, $\mathcal{F}(G)$ and $\mathcal{F}(\underline{G})$ are the same. So, from now on whenever we write $\mathcal{F}(G)$,  we can forget the orientation of the graph. 
\end{remark}

\begin{remark}\label{Remark: The graph of two edge connected components is a forest}
The graph $\mathcal{F}(G)$ is a forest as shrinking two edge connected components into a single vertex gets rid of any cycle. For this reason we will refer to $\mathcal{F}(G)$ as the forest of two edge connected components of $G$. 
\end{remark}

An example of a graph $G$ and its forest of two edge connected components can be seen in Figure \ref{Figure: forest of two edge connected components}.

\subsection{Sums associated with graphs}

Let $N,m\in\mathbb{N}$. For a collection of deterministic matrices $D_n=(d^{(n)}_{i,j})_{i,j=1}^N$ for $n=1,\dots,m$,  we will be interested in the $N$-order of the sums of the following form
\begin{equation}\label{Equation: Sum associated to product of deterministic matrices}
S_{\pi}(N) \vcentcolon= \sum_{\substack{j_1,\dots,j_{2m}=1 \\ ker(j)\geq \pi}}^N d_{j_1,j_2}^{(1)}\cdots d_{j_{2m-1},j_{2m}}^{(m)}.
\end{equation}
Here $\pi$ is a partition of $\cP(2m)$ and $ker(j)$ is the partition of $\cP(2m)$ given by $u$ and $v$ being in the same block whenever $j_u=j_v$. These sums appear naturally when considering the cumulants of traces of products of deterministic and permutation invariant random matrices, see for example \cite{MMPS}. The estimate of the quantity in (\ref{Equation: Sum associated to product of deterministic matrices}) has been studied before; see for example \cite[Section 2.1.2]{BS} and  \cite[Theorem 6]{MSBounds}. The method provided in \cite{MSBounds} is to associate a graph to each sum of the form in (\ref{Equation: Sum associated to product of deterministic matrices}). Let $G=(V,E)$ be the oriented graph with set of vertices $[2 m]\vcentcolon =\{1,\dots,2m\}$ and set of edges 
$e_1=(2,1),\dots,e_m=(2m,2m-1)$. 
Let $G^\pi=(V^\pi,E^\pi)$ be the quotient graph of $G$ with respect to $\pi$. We can then rewrite the sum in (\ref{Equation: Sum associated to product of deterministic matrices}) as follows,
\begin{equation}\label{Equation: Sum associated to product of deterministic matrices in terms of graph}
S_\pi(N) = \sum_{\substack{j:V^\pi\rightarrow [N]}}\prod_{i=1}^m d^{(i)}_{j(trg(e_i^\pi)),j(src(e_i^\pi))}.
\end{equation}

As proved in \cite[Theorem 6]{MSBounds}, we have the following Lemma.

\begin{lemma}\label{Lemma: Order of sums associated to deterministic matrices}
Let $\pi\in \cP(2m)$ and let $S_{\pi}(N)$ be the sum in (\ref{Equation: Sum associated to product of deterministic matrices in terms of graph}). Then,
$$|S_\pi(N)|\leq N^{t(G^\pi)}\prod_{i=1}^m ||D_i||.$$
Here  $G^\pi$ is the quotient graph of $G$ defined as before, and $t(G^\pi)$ is determined by the forest of two edge connected components $\mathcal{F}(G^\pi)$ as follows
$$t(G^\pi)=\sum_{L\text{ leaf of }\mathcal{F}(G^\pi)}t(L),$$
where
$$t(L)= \left\{ \begin{array}{lc} 1 & \text{if } L\text{ is an isolate vertex}, \\ \\ 
1/2 & \text{otherwise}. \end{array} \right.$$
\end{lemma}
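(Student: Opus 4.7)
The plan is an induction on the number of vertices of $\mathcal{F}(G^\pi)$, peeling off one leaf at a time and showing that each peel multiplies the bound by exactly $N^{t(L)}$. The analytic tools are the submultiplicativity of the operator norm and the elementary estimate that any column/row/diagonal vector $(A_{j,k})_{j=1}^N$ of an $N \times N$ matrix $A$ has $\ell^2$-norm at most $\sqrt{N}\,\|A\|$, together with Cauchy--Schwarz.

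I would first treat the base case in which $\mathcal{F}(G^\pi)$ consists of a single isolated vertex, i.e.\ $G^\pi$ is itself two-edge connected. Here I claim that $S_\pi(N)$ can be rewritten, after grouping the edges along an Eulerian-type traversal of $G^\pi$ (which exists because every two-edge connected graph admits such a traversal), as a generalized trace $\mathrm{Tr}\bigl(D_{i_1}^{\epsilon_1}\cdots D_{i_k}^{\epsilon_k}\bigr)$, where each $\epsilon_j \in \{\mathrm{Id}, \top\}$ is dictated by whether the corresponding edge $e_{i_j}$ of $G$ is traversed in the direction $src \to trg$ or in reverse. The bound $|\mathrm{Tr}(M)| \leq N\|M\|$ combined with submultiplicativity then gives $|S_\pi(N)| \leq N\prod_{i=1}^m \|D_i\|$, matching $t(G^\pi) = 1$.

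For the inductive step, assume $\mathcal{F}(G^\pi)$ has at least two vertices and pick a leaf $L$ attached to the rest via a unique cutting edge $e$ with endpoints $u \in L$ and $v \notin L$. Perform first the partial summation over all indices attached to vertices strictly inside $L$, keeping $j_v$ fixed; by the same trace/product identification as in the base case applied to the subgraph $L$, the inner sum evaluates to a scalar $a(j_v)$ that is either a single entry of some matrix $D_i$ (when $L$ is a singleton) or a diagonal entry of a product $\prod_{i \in L} D_i^{\epsilon_i}$ (when $L$ has multiple vertices), whose absolute value at each $j_v$ is therefore controlled by $\prod_{i \in L}\|D_i\|$. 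Writing $S_\pi(N) = \sum_{j_v} a(j_v)\,b(j_v)$ where $b(j_v)$ is the remaining sum, and applying Cauchy--Schwarz in $j_v$ together with the diagonal estimate $\|a(\cdot)\|_{\ell^2} \leq \sqrt{N}\prod_{i \in L}\|D_i\|$, yields $|S_\pi(N)| \leq \sqrt{N}\prod_{i\in L}\|D_i\| \cdot |S_{\pi'}(N)|^{1/?}$ for a reduced graph $G^{\pi'}$ whose forest equals $\mathcal{F}(G^\pi) \setminus \{L\}$. Because $t(G^\pi) = t(L) + t(G^{\pi'}) = \tfrac12 + t(G^{\pi'})$, the induction hypothesis applied to $G^{\pi'}$ closes the estimate.

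The main obstacle I expect is the sub-case in which the peeled leaf $L$ contains more than one vertex: one must (i) identify the internal sum over $L$ with boundary vertex $u$ fixed at $j_v$ as a single matrix entry of a concrete product of the $D_i$'s, respecting the orientations forced by the $src$/$trg$ structure of the original edges $e_i$, and (ii) verify that after this contraction the ambient sum has exactly the form $S_{\pi'}(N)$ for a well-defined partition $\pi'$ on a strictly smaller graph. Both points reduce to the Eulerian traversal analysis already used in the base case, now applied to $L$ alone. If instead one prefers to bypass these combinatorial details, one may simply invoke \cite[Theorem 6]{MSBounds}, which carries out precisely this induction in a slightly more general permutation-invariant framework.
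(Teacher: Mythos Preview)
The paper does not prove this lemma at all: immediately before stating it, the authors write ``As proved in \cite[Theorem 6]{MSBounds}, we have the following Lemma,'' and no further argument is given. Your final fallback --- simply invoking \cite[Theorem~6]{MSBounds} --- is therefore exactly what the paper does.

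Your attempted self-contained argument, however, has a real gap in the base case. You claim that a two-edge-connected $G^\pi$ admits an Eulerian traversal, so that $S_\pi(N)$ becomes a single trace $\mathrm{Tr}(D_{i_1}^{\epsilon_1}\cdots D_{i_k}^{\epsilon_k})$. This is false: two-edge-connectedness does not force even degrees. In the present setup the degree of a vertex $B\in\pi$ in $G^\pi$ equals $|B|$, and nothing prevents odd blocks. For a concrete instance take $m=3$ and $\pi=\{\{1,4,5\},\{2,3,6\}\}$; then $G^\pi$ is two vertices joined by three parallel edges, which is two-edge-connected but has both vertices of degree~$3$, so no Eulerian circuit exists and $S_\pi(N)=\sum_{a,b} d^{(1)}_{a,b}d^{(2)}_{b,a}d^{(3)}_{a,b}$ cannot be written as a trace of a product of the $D_i$'s and their transposes. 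The bound $|S_\pi(N)|\le N\|D_1\|\|D_2\|\|D_3\|$ still holds, but it requires a genuinely different mechanism (in \cite{MSBounds} this is handled via an ear decomposition / careful vector-norm bookkeeping rather than a single trace identity).

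Your inductive step is also not closed: the expression ``$|S_{\pi'}(N)|^{1/?}$'' signals exactly the difficulty. After Cauchy--Schwarz in $j_v$ you obtain $\|a\|_{\ell^2}\|b\|_{\ell^2}$, but $\|b\|_{\ell^2}$ is the $\ell^2$-norm of the residual sum \emph{as a function of $j_v$}, not $|S_{\pi'}(N)|$ for some reduced partition $\pi'$, so the induction hypothesis does not apply to it directly. The actual argument in \cite{MSBounds} handles cutting edges and leaves by a more careful vector/operator factorization, not by a bare Cauchy--Schwarz on the boundary vertex. If you want a self-contained proof, you should follow that paper's decomposition rather than the Eulerian/Cauchy--Schwarz sketch.
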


\begin{example}\label{Example: order of a product of deterministic matrices} 
Let $D_1,\dots,D_{14}$ be $N\times N$ deterministic matrices with entries $D_k=(d_{i,j}^{(k)})_{i,j=1}^N$. We want to find an upper bound of the form $N^{t}\prod_{i=1}^{14} ||D_i||$, 
for the absolute value of the sum
$$S(N)=\sum_{j_1,\dots,j_{28}=1}^N d_{j_1,j_2}^{(1)}\cdots d_{j_{27},j_{28}}^{(14)},$$
subject to the constrains
\begin{align*}
j_1=j_4, \quad j_2=j_3, \quad j_5=j_8, \quad j_7=j_{19}, \quad j_{16}=j_{20}, \quad j_{10}=j_{11}, \quad j_{18}=j_{22} \\
j_6=j_{15}=j_{26}, \quad j_9=j_{13}=j_{25}, \quad j_{17}=j_{21}=j_{27}, \quad j_{12}=j_{14}=j_{23}=j_{28}. 
\end{align*}
% \begin{equation*}
% \left\{ \begin{array}{l} 
% j_1=j_4 \\
% j_2=j_3 \\
% j_5=j_8 \\
% j_6=j_{15}=j_{26} \\
% j_7=j_{19} \\
% j_{16}=j_{20} \\
% j_9=j_{13}=j_{25} \\
% j_{10}=j_{11} \\
% j_{12}=j_{14}=j_{23}=j_{28} \\
% j_{17}=j_{21}=j_{27} \\
% j_{18}=j_{22}
% \end{array} \right.
% \end{equation*}
To find the optimal choice of $t$ we consider the graph $G=(V,E)$ with set of vertices $\{1, \ldots, 28\}$ and edges $e_i=(2i,2i-1)$ for $i=1,\dots,14$. Our constraints determine the partition $\pi$ with blocks,
\begin{eqnarray*}
&&\{1,4\},\{2,3\},\{5,8\},\{6,15,26\},\{7,19\},\{16,20\},\{9,13,25\},\{10,11\} \\
&&\{12,14,23,28\},\{17,21,27\},\{18,22\},\{24\}.
\end{eqnarray*}
Hence we can rewrite the sum as,
\begin{eqnarray*}
S(N) = S_{\pi}(N)=\sum_{\substack{j_1,\dots,j_{28}=1}}^N d_{j_1,j_2}^{(1)}\cdots d_{j_{27},j_{28}}^{(14)} 
= \sum_{\substack{j:V^\pi\rightarrow [N]}}\prod_{i=1}^{14} d^{(i)}_{j(trg(e_i^\pi)),j(src(e_i^\pi))},
\end{eqnarray*}
where $G^{\pi}=(V^{\pi},E^{\pi})$ is the quotient graph of $G$ under $\pi$. This graph has two connected components; one consisting of a two edge connected component and the other containing three two edge connected components. Its forest of two edge connected components is a forest with two trees, one with a single vertex and the other one with three leaves. These graphs can be seen in Figure \ref{Figure: forest of two edge connected components}. Therefore, from Lemma \ref{Lemma: Order of sums associated to deterministic matrices}, it follows that
$$|S_{\pi}(N)|\leq N^{5/2}\prod_{i=1}^{14}||D_i||.$$
\end{example}

\begin{figure}
    \centering
    \includegraphics[width=0.7\textwidth]{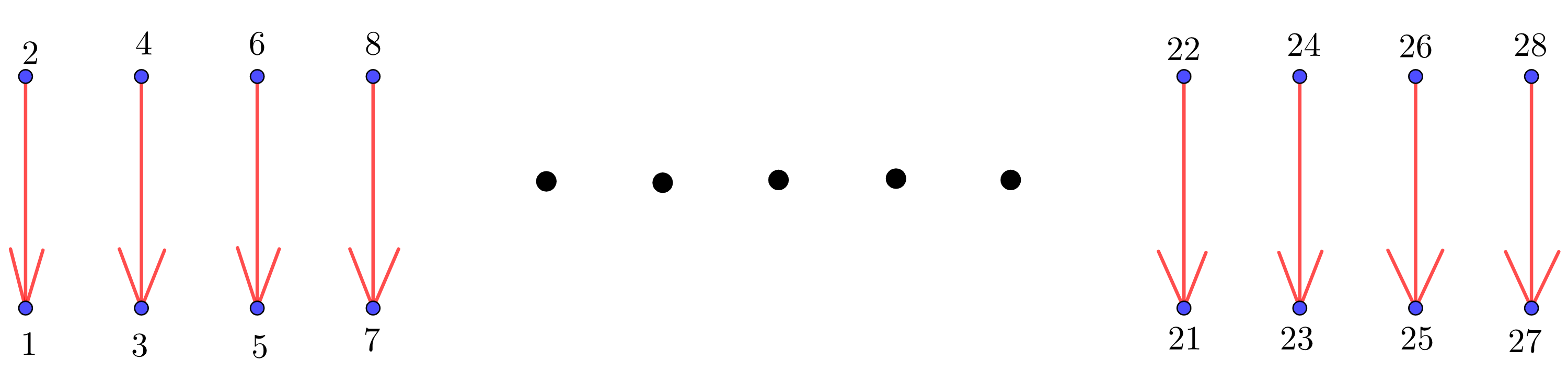}\\
    \text{a) The graph $G$.}\\
    \includegraphics[width=0.7\textwidth]{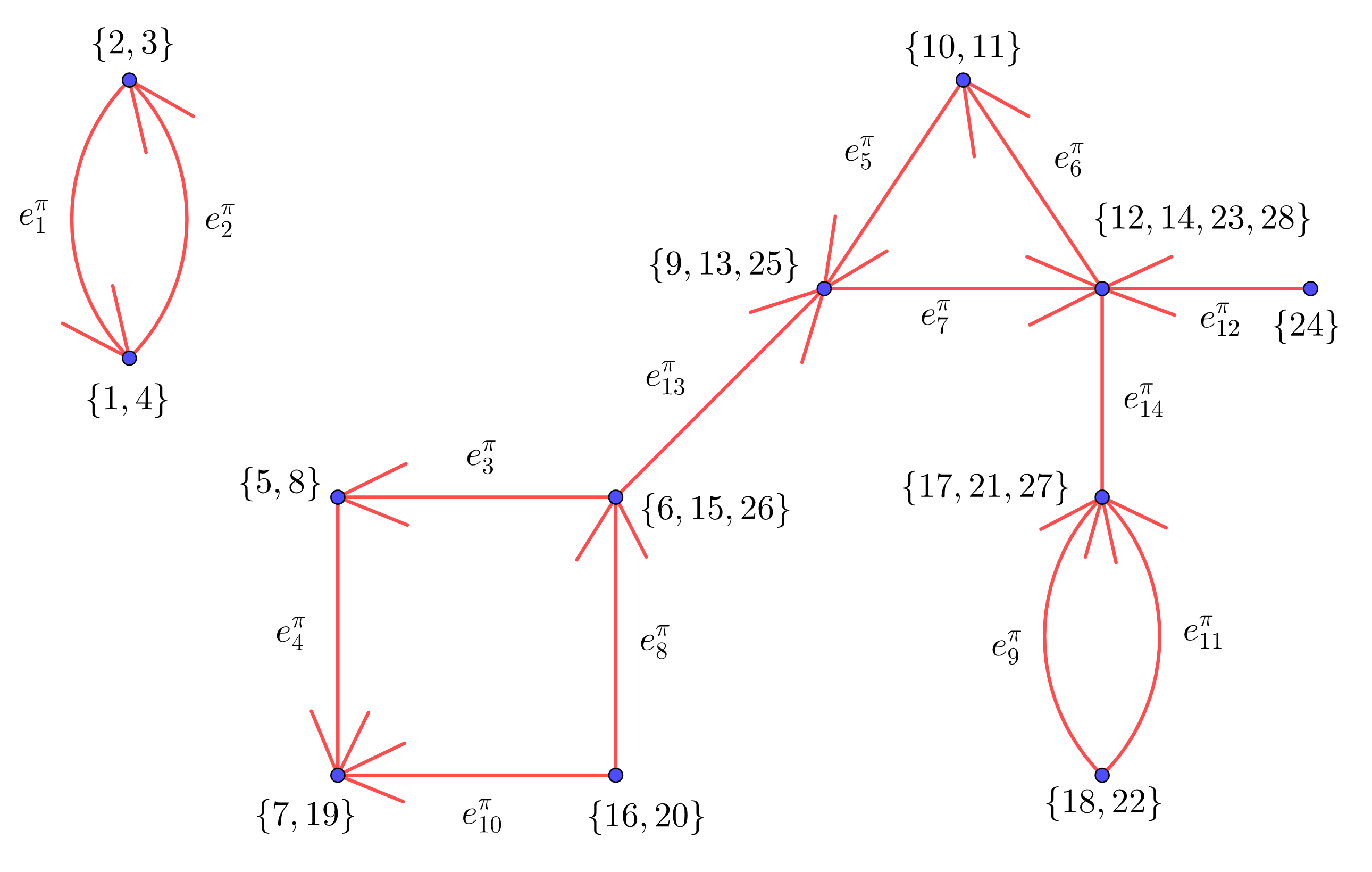}\\
    \text{b) The quotient graph $G^\pi$.} \\
    \includegraphics[width=0.6\textwidth]{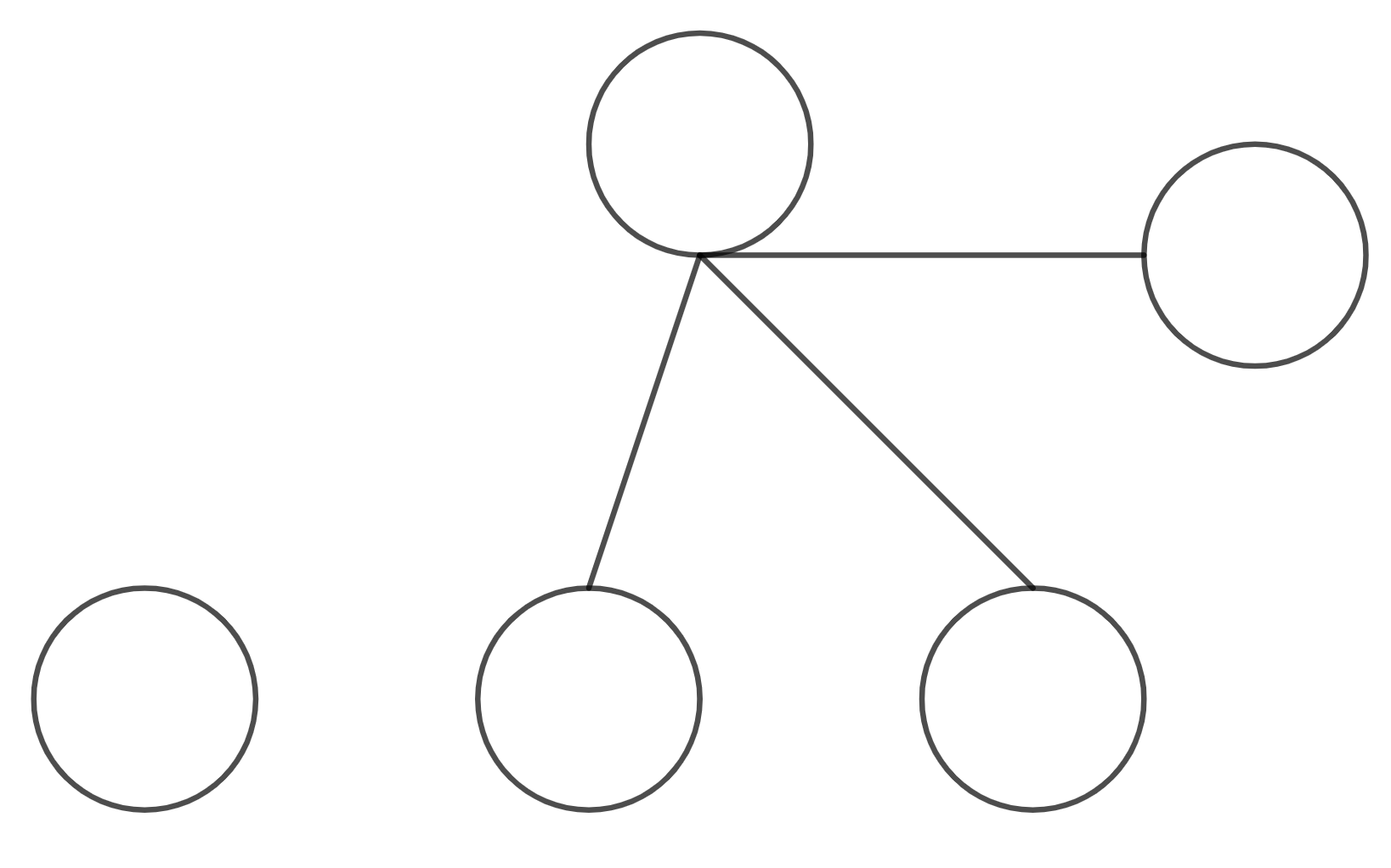}\\
    \text{c) The forest of two edge connected components $\mathcal{F}(G^\pi)$.} \\
    \caption{The graphs of Example \ref{Example: order of a product of deterministic matrices}.}
    \label{Figure: forest of two edge connected components}
\end{figure}

The following simple lemma can be seen from the proof of \cite[Lemma 17]{MMPS}. For the convenience of the reader, we reproduce it here. 

\begin{lemma}\label{Lemma: Order doesn't increase when doing quotient}

Let $\pi,\sigma\in \cP(2m)$ be such that $\pi\leq\sigma$, then,
$$t(G^\sigma)\leq t(G^\pi).$$
\end{lemma}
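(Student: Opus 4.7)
The plan is to reduce to the elementary case and then do a careful case analysis on how the forest of two-edge-connected components transforms under a single vertex identification.

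First I would reduce to an elementary move. Since $\pi \leq \sigma$, the partition $\sigma$ is obtained from $\pi$ by iteratively merging pairs of blocks, producing a chain $\pi = \pi_0 < \pi_1 < \cdots < \pi_n = \sigma$ where each $\pi_{i+1}$ merges exactly two blocks of $\pi_i$. Consequently $G^{\pi_{i+1}}$ is obtained from $G^{\pi_i}$ by identifying two of its vertices, say $u$ and $v$, into a single vertex. By transitivity it suffices to prove $t(G^{\pi_{i+1}}) \leq t(G^{\pi_i})$ for a single such identification.

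Next, split into the natural cases for the relative position of $u$ and $v$ in $G^{\pi_i}$. Case 1: $u$ and $v$ lie in different connected components. Joining two graphs at a single vertex never creates a cutting edge, so the TECCs of $u$ and $v$ merge into one TECC, while all other TECCs and cutting edges are preserved. One can then check directly that the two forest-vertices $T_u, T_v$ coalesce into one whose new degree is $\deg(T_u)+\deg(T_v)$; by examining the six subcases for these degrees (both $0$, one $0$ and one $\geq 1$, etc.) one sees that the total weight strictly decreases. Case 2a: $u$ and $v$ lie in the same TECC. Then no cutting edge is destroyed and no new TECC is formed, so $\mathcal{F}$ is unchanged and $t$ is unchanged. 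Case 2b is the main case: $u$ and $v$ are in the same component but in different TECCs. Let $T_u = U_0, U_1, \ldots, U_k = T_v$ be the (unique) path in $\mathcal{F}(G^{\pi_i})$. Identifying $u$ with $v$ creates a cycle through all the cutting edges on this path, so these cutting edges disappear and $U_0, \ldots, U_k$ fuse into a single TECC $U$. Combinatorially, $\mathcal{F}(G^{\pi_{i+1}})$ is obtained from $\mathcal{F}(G^{\pi_i})$ by contracting the path $U_0 \cdots U_k$ to a point $U$; all branches attached off-path survive unchanged.

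The heart of the argument is the bookkeeping in Case 2b. Writing $d_j$ for the degree of $U_j$ in $\mathcal{F}(G^{\pi_i})$, the new degree satisfies $D \vcentcolon= \deg(U) = \sum_{j=0}^{k} d_j - 2k$. The constraints $d_0,d_k \geq 1$ and $d_j \geq 2$ for $1 \leq j \leq k-1$ give $D \geq 0$, with $D=0$ precisely when the whole tree is the path itself. One then checks: leaves of $\mathcal{F}(G^{\pi_i})$ lying off the path remain leaves with the same weight; the only contributions from the path are $\mathbf{1}[d_0=1]/2 + \mathbf{1}[d_k=1]/2$ before the move, while after the move $U$ contributes $1$ if $D=0$, $1/2$ if $D=1$, and $0$ if $D\geq 2$. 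A short enumeration of these three possibilities shows the net change is always $\leq 0$: when $D=0$ both endpoints must have been leaves so the balance is $1-1=0$; when $D=1$ at most one extra leaf-half is lost; when $D\geq 2$ the contribution from the path can only go down.

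The step I expect to be the main obstacle is Case 2b, since it requires identifying precisely which edges of $\mathcal{F}(G^{\pi_i})$ are absorbed into the new TECC. The clean way to handle this is to argue first that a path $u = w_0, w_1, \ldots, w_\ell = v$ in $G^{\pi_i}$ becomes a cycle in $G^{\pi_{i+1}}$ and therefore every edge on it is in a cycle, hence no longer cutting; conversely any cutting edge off this path in $G^{\pi_i}$ remains cutting. Once this geometric fact is established, the reduction of $\mathcal{F}$ to a path-contraction follows, and the numerical inequality is then a finite enumeration. Combining the three cases, the elementary inequality $t(G^\sigma) \leq t(G^\pi)$ holds, and iterating completes the proof.
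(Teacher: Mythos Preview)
Your proof is correct and follows essentially the same reduction as the paper---viewing $G^\sigma$ as obtained from $G^\pi$ by successive vertex identifications and arguing that each identification can only merge two-edge-connected components. The difference is one of detail: the paper's argument is a two-sentence sketch that stops at ``the number of two-edge-connected components cannot increase'' and defers the remaining verification to \cite{MMPS}, whereas you actually carry out the leaf-weight bookkeeping on $\mathcal{F}$ case by case. Since $t$ is a weighted sum over leaves rather than the raw count of components, your explicit analysis of how $\mathcal{F}$ transforms (especially the path-contraction picture in Case~2b and the degree formula $D=\sum d_j-2k$) is genuinely more informative than what the paper writes down.

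One small inaccuracy: in Case~1 you assert the total weight \emph{strictly} decreases, but when both $T_u$ and $T_v$ have degree $\geq 2$ the contribution is $0$ before and $0$ after, so the change is only $\leq 0$. This does not affect the argument, since the lemma only asks for the non-strict inequality.
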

\begin{proof}
Observe that the graph $G^{\sigma}$ can be seen as a quotient graph of $G^\pi$ by joining vertices of $G^\pi$ that correspond to blocks of $\pi$ in the same block of $\sigma$. It is clear that when we identify vertices the number of two edge connected components of the graphs cannot increase as a two edge connected component in the original graph will still be part of a two edge connected component in the new graph. From this observation the proof follows directly.
\end{proof}

\subsection{Quotient graphs induced by graphs}\label{SubSection: Graphs induced by graphs}

Let us now recall a bit about the proof strategy. For the Gaussian case, we will use Lemmas \ref{Lemma: Order of sums associated to deterministic matrices} and \ref{Lemma: Order doesn't increase when doing quotient} to find the bound of the cumulants. The Wigner case is more involved. The main difficulty is that all even cumulants of $x_{i,j}$ for the Wigner case might be nonzero, unlike the Gaussian case where only the second one is nonzero. It would mean that we need to consider more partitions and hence more complicated graph structures in the Wigner case. Thankfully, we will be able to eventually reduce the Wigner case to the Gaussian case. The main idea is that any graph associated to the Wigner case can be seen as the union of many graphs, each associated to the Gaussian case. Here by union of graphs we mean graphs that are merged by joining vertices. The method that we introduce here permits us to find the number of two edge connected components of the new graph in terms of the number of two edge connected components of the graphs that are merged.

As discussed above, we would like to define a graph $G$ as the union of disjoint graphs $G_1,\dots,G_n$ by merging them through their vertices. Further, we will explore the relation between $t(G)$ and $\sum_i t(G_i)$, where $t(\cdot)$ is the exponent defined in Lemma \ref{Lemma: Order of sums associated to deterministic matrices}. The simplest way to define the graph $G$ will be using another graph $T$ whose vertices will be indexed by the graphs $G_i$ and an edge $\{G_i,G_j\}$ will determine how we glue the graphs $G_i$ and $G_j$. We will call this a quotient graph induced by the graph $T$. Let us give a more rigorous definition.

\begin{definition}\label{Definition: Quotient graph induces by graph}
Let $G_1,\dots,G_n$ be unoriented graphs with set of vertices and edges $(V_i,E_i)$ for $1\leq i\leq n$. Let $T=(V_T,E_T)$ be an unoriented graph with set of vertices $V_T=\{G_i:1\leq i\leq n\}$. For each edge $e=\{G_i,G_j\}\in E_T$ we also consider a pair of vertices $\{v_i,v_j\}$ where $v_i$ and $v_j$ are vertices of $G_i$ and $G_j$ respectively. We denote the vertices $\{v_i,v_j\}$ associated with the edge $e\in E_T$ by $V(e)$.  
We call the triple $T=(V_T,E_T,\{V(e)\}_{e\in E_T})$ a \textit{graph of the graphs} $G_1,\dots,G_n$. We let $\pi_T\in \cP(\cup_{i=1}^n V_i)$ be the partition given by $u$ and $v$ being in the same block of $\pi_T$ whenever there is an edge $e\in E_T$ such that $V(e)=\{u,v\}$. Let $G=\cup_{i=1}^n G_i$ be the union of the graphs, that is, the graph with set of vertices $\cup_{i=1}^n V_i$ and set of edges $\cup_{i=1}^n E_i$. The quotient graph of $G$ under $\pi_T$ denoted by $G^T$ will be called the \textit{ quotient graph induced by the graph} $T$. 
\end{definition}

\begin{remark}
In Definition \ref{Definition: Quotient graph induces by graph},  the graph $T$ is allowed to have multiple edges and loops,  although most of our interesting results will follow directly from the case where $T$ is a tree. For instance, see Example \ref{Example: Graph induced by graphs} for a quotient graph induced by a tree.
\end{remark}

\begin{example}\label{Example: Graph induced by graphs}
Let $G_1=(V_1,E_1),G_2=(V_2,E_2)$ and $G_3=(V_3,E_3)$ be the graphs with set of vertices
\begin{eqnarray*}
V_1 = \{1,2,3\}, \quad
V_2 = \{4,5,6\}, \quad
V_3 = \{7,8,9\},
\end{eqnarray*}
and edges
\begin{eqnarray*}
E_1 & = & \{e_1=\{1,2\},e_2=\{1,2\},e_3=\{2,3\},e_4=\{3,3\}\} \\
E_2 & = & \{e_5=\{4,5\},e_6=\{5,6\},e_7=\{4,6\}\} \\
E_3 & = & \{e_8=\{7,8\},e_9=\{7,8\},e_{10}=\{7,7\},e_{11}=\{9,9\}\}.
\end{eqnarray*}
We let $T=(V_T,E_T,\{V(e)\}_{e\in E_T})$ be the graph of the graphs $G_1,G_2,G_3$ with vertices $V_T=\{G_1,G_2,G_3\}$ and edges $E_T=\{e_1^T,e_2^T\}$ given by
\begin{align*}
e_1^T=\{G_1,G_2\},e_2^T=\{G_1,G_3\},\qquad
V(e_1^T)=\{3,4\},V(e_2^T)=\{2,8\}.
\end{align*}
Therefore $\pi_T$, which we simply denote by $\pi$,  is given by
$$\pi=\{1\},\{2,8\},\{3,4\},\{5\},\{6\},\{7\},\{9\}.$$
If $G=G_1\cup G_2\cup G_3$, then $G^T$ is a graph with vertices given by the blocks of $\pi$ and edges $e_1^\pi=\{\{1\},\{2,8\}\}, e_2^\pi=\{\{1\},\{2,8\}\}, e_3^\pi=\{\{2,8\},\{3,4\}\}, e_4^\pi=\{\{3,4\},\{3,4\}\}, e_5^\pi=\{\{3,4\},\{5\}\}, e_6^\pi=\{\{5\},\{6\}\}, e_7^\pi=\{\{3,4\},\{6\}\}, e_8^\pi=\{\{2,8\},\{7\}\}, e_9^\pi=\{\{2,8\},\{7\}\}, e_{10}^\pi=\{\{7\},\{7\}\}$ and $e_{11}^\pi=\{\{9\},\{9\}\}$. The figures of these graphs can be seen in Figure \ref{Figure: Graph induced by graphs}.
\end{example}

\begin{figure}
    \centering
    \includegraphics[width=0.8\textwidth]{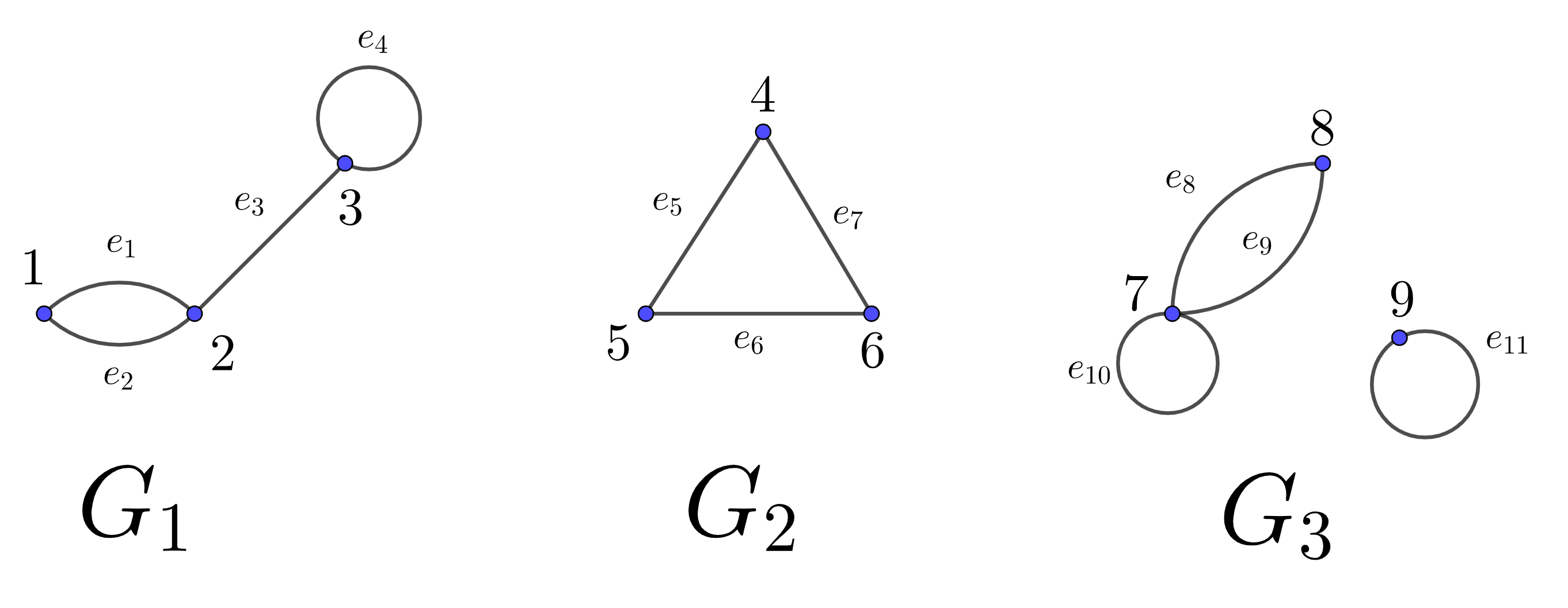}\\
    \text{a) The graphs $G_1,G_2,G_3$.}\\
    \includegraphics[width=0.7\textwidth]{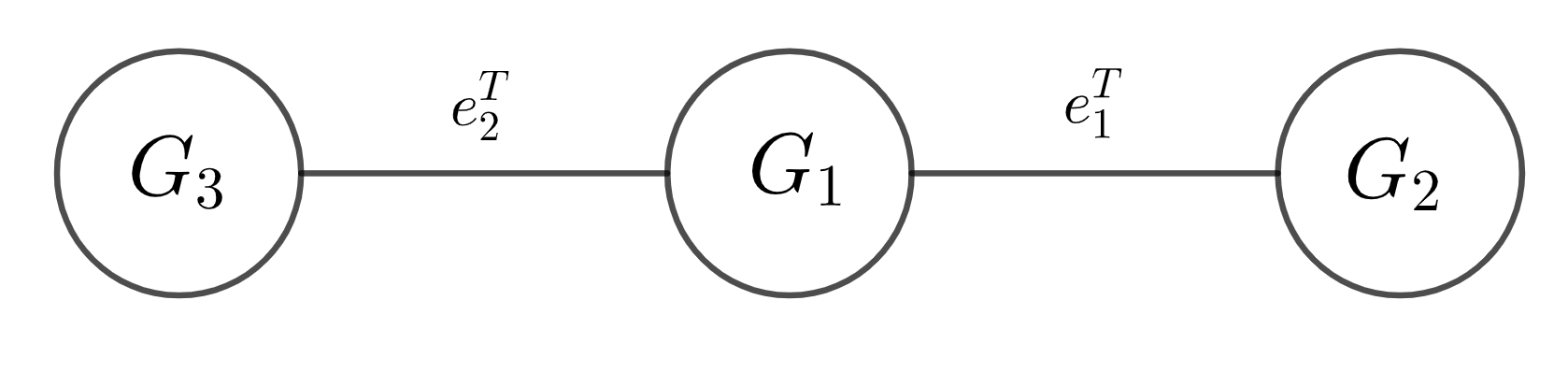}\\
    \text{b) The graph $T$.} \\
    \includegraphics[width=0.5\textwidth]{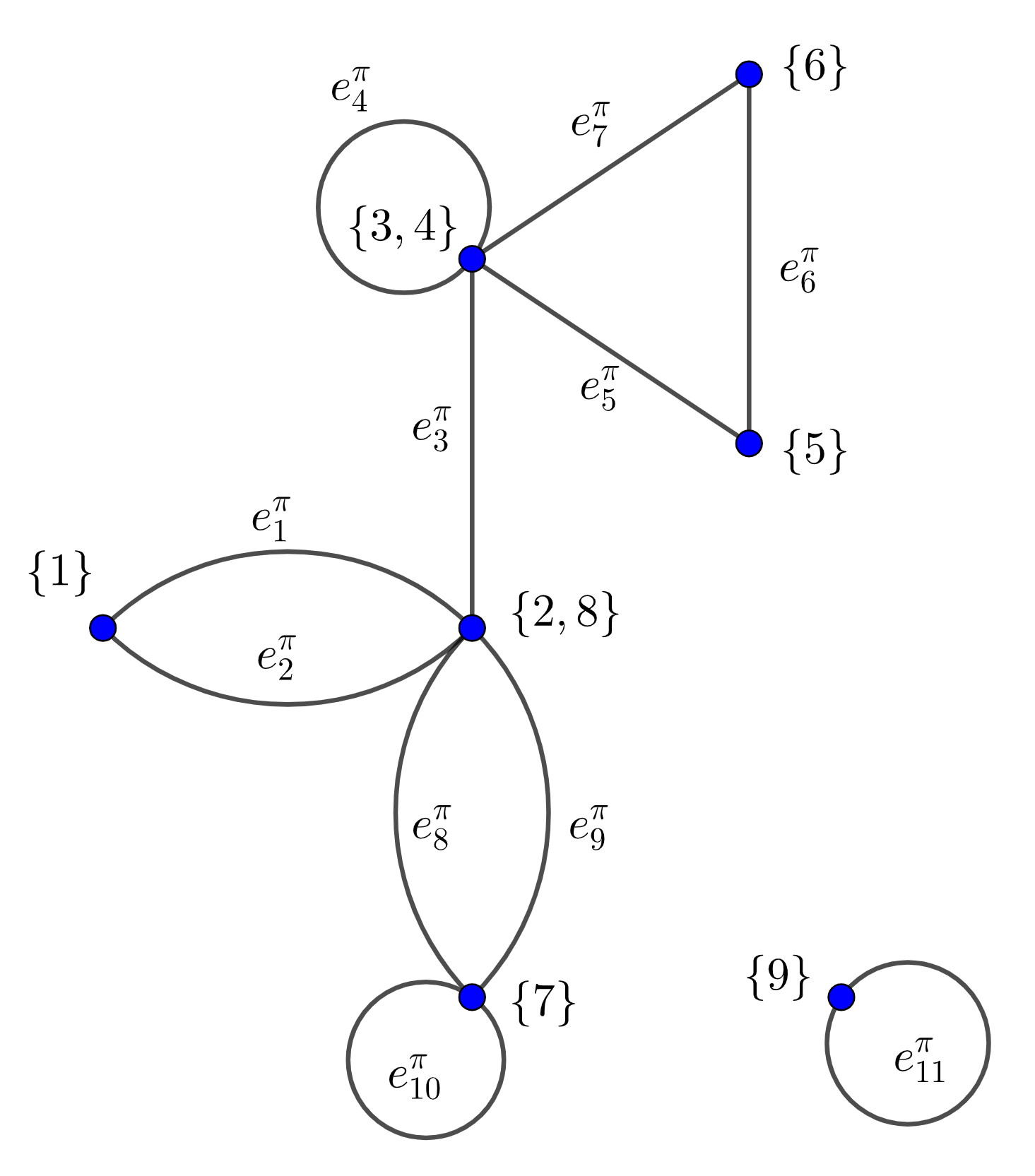}\\
    \text{c) The graph $G^T$.} \\
    \caption{The graphs of Example \ref{Example: Graph induced by graphs}.}
    \label{Figure: Graph induced by graphs}
\end{figure}

\begin{lemma}\label{Lemma: Order of exponent when inducing graph is a tree}
Let $G_1,\dots,G_n$ be unoriented graphs and let $T=(V_T,E_T,\ab\{V(e)\}_{e\in E_T})$ be a graph of the graphs $G_i$. Let $G=\cup_{i=1}^n G_i$ be the graph consisting of the union of the graphs $G_i$. If $T$ is a tree, then
$$t(G^T) = \sum_{i=1}^n t(G_i)-n+1.$$
\end{lemma}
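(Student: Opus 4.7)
The plan is to prove the identity by induction on $n$. For the base case $n=1$, the tree $T$ has no edges, so $\pi_T$ is the identity partition, $G^T = G_1$, and the formula reduces to the tautology $t(G_1) = t(G_1)$.

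For the inductive step with $n \geq 2$, I pick a leaf $G_n$ of the tree $T$ with its unique incident edge $e^\star \in E_T$ and associated pair $V(e^\star) = \{v_n, v_j\}$, where $v_n \in G_n$ and $v_j \in G_j$ for some $j < n$. Deleting $G_n$ and $e^\star$ from $T$ yields a tree $T'$ on the remaining graphs $G_1, \ldots, G_{n-1}$, and writing $G' = \bigcup_{i<n} G_i$, the inductive hypothesis gives $t((G')^{T'}) = \sum_{i=1}^{n-1} t(G_i) - (n-2)$. Because $v_n$, belonging to the leaf $G_n$, appears in no pair associated to an edge of $T'$, the graph $G^T$ is obtained from the disjoint union $(G')^{T'} \sqcup G_n$ by a single further identification: the class of $v_j$ in $(G')^{T'}$ is glued to $v_n \in G_n$. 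The induction then reduces to the single-vertex gluing identity
\[
t(H_1 \cup_{u \sim w} H_2) = t(H_1) + t(H_2) - 1,
\]
applied with $H_1 = (G')^{T'}$ and $H_2 = G_n$.

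The main obstacle is establishing this single-vertex gluing identity. To this end I would analyze how the forest $\mathcal{F}$ of two-edge-connected components changes under the identification $u \sim w$. Letting $C_u$ and $C_w$ denote the two-edge-connected components containing $u$ in $H_1$ and $w$ in $H_2$, a short topological argument shows that the two-edge-connected component of the glued graph containing the merged vertex is exactly $C_u \cup C_w$, obtained by concatenating edge-disjoint paths in $C_u$ with those in $C_w$, while every other two-edge-connected component and every cutting edge of $H_1$ and $H_2$ is preserved. Consequently $\mathcal{F}(H_1 \cup_{u \sim w} H_2)$ is exactly $\mathcal{F}(H_1) \sqcup \mathcal{F}(H_2)$ with the two vertices $C_u$ and $C_w$ merged into one. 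Rewriting $t(H) = \#V_0(\mathcal{F}(H)) + \tfrac{1}{2}\#V_1(\mathcal{F}(H))$, where $V_k(\mathcal{F})$ denotes the set of degree-$k$ vertices of the forest, the required decrement by $1$ follows from a case analysis on the degrees of $C_u$ and $C_w$ in the respective forests. Once this local identity is secured, the induction runs mechanically to give the claimed formula.
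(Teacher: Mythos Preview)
Your overall approach mirrors the paper's exactly: induct on $n$, peel off a leaf of $T$, and reduce everything to the single-vertex gluing identity $t(H_1 \cup_{u\sim w} H_2) = t(H_1) + t(H_2) - 1$. The paper simply declares this identity ``clear,'' whereas you go further and propose to verify it by a case analysis on the degrees of $C_u$ and $C_w$ in $\mathcal{F}(H_1)$ and $\mathcal{F}(H_2)$.

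That case analysis, carried out honestly, does \emph{not} yield a decrement of $1$ in every case. If $\deg C_u = 1$ and $\deg C_w \geq 2$, the merged vertex has degree $\geq 3$ and the contributions to $t$ change from $\tfrac12 + 0$ to $0$, a drop of only $\tfrac12$; if both degrees are at least $2$ there is no drop at all. Concretely, take $H_1 = H_2$ to be a path $a\text{--}b\text{--}c$ on three vertices (each edge a cutting edge, so $t(H_i)=1$) and glue at the two middle vertices: the resulting star has $\mathcal{F}$ a star with four degree-$1$ leaves, hence $t = 2 \neq 1+1-1$. So the lemma as stated for \emph{arbitrary} unoriented graphs $G_i$ is actually false, and the paper's proof carries the same gap behind the phrase ``it is clear.'' In all of the paper's applications, however, the graphs $G_i^{\pi_{\sigma_i}}$ fed into this lemma are disjoint unions of cycles (every vertex has degree $2$, hence no cutting edges); for such graphs $\mathcal{F}$ consists solely of isolated vertices, $t$ is simply the number of connected components, and then the single-vertex gluing identity is immediate. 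If you add the hypothesis that the $G_i$ have no cutting edges---or, equivalently, replace $t$ by the count of two-edge-connected components---both your argument and the paper's go through verbatim.
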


\begin{proof}
We prove this by induction on $n$. In the case $n=1$ we have a single graph $G_1$, and hence $G^T=G$, from which the conclusion follows. Let us assume that it is true for $n$ and  prove it for $n+1$. Since $T$ is a tree, it has a leaf consisting of a vertex, say $G_{n+1}$,  and an edge, say $e=\{G_n,G_{n+1}\}$,  with $V(e)=\{v_n,v_{n+1}\}$. Let $T^\prime$ be the graph $T$ without the vertex $G_{n+1}$ and the edge $e$. So $T^\prime$ is a tree with $n$ vertices. Let $G^\prime=\cup_{i=1}^n G_i$, then by induction hypothesis we know
$$t({G^\prime}^{T^\prime})= \sum_{i=1}^n t(G_i)-n+1.$$
Observe that since $(G_{n+1},e)$ is a leaf, the graph $G^{T}$ can be obtained from merging the vertex $[v_n]_{\pi_{T^\prime}}$ of ${G^\prime}^{T^\prime}$ and the vertex $v_{n+1}$ of $G_{n+1}$. If we merge two graphs by joining two vertices, one from each graph, it is clear that
$$t(G^T)=t({G^\prime}^{T^\prime})+t(G_{n+1})-1,$$
since the two edge connected component of ${G^\prime}^{T^\prime}$ that contains $[v_n]_{\pi_{T^\prime}}$ and the two edge connected component of $G_{n+1}$ that contains $v_{n+1}$ are joined into the same two edge connected component of the new graph. Hence,
\begin{eqnarray*}
t(G^T)&=&t({G^\prime}^{T^\prime})+t(G_{n+1})-1 \\
& = & \sum_{i=1}^n t(G_i)-n+1+t(G_{n+1})-1=\sum_{i=1}^{n+1} t(G_i)-n,
\end{eqnarray*}
where in last equality we use the induction hypothesis.
\end{proof}

\begin{corollary}\label{Corollary: Order of exponent when inducing graph is connected}
Let $G_1,\dots,G_n$ be unoriented graphs and let $T=(V_T,E_T,\ab\{V(e)\}_{e\in E_T})$ be a graph of the graphs $G_i$. Let $G=\cup_{i=1}^n G_i$ be the graph consisting of the union of the graphs $G_i$. If $T$ is connected, then
$$t(G^T) \leq \sum_{i=1}^n t(G_i)-n+1.$$
\end{corollary}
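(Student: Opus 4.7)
The plan is to reduce the connected case to the tree case already handled by Lemma \ref{Lemma: Order of exponent when inducing graph is tree} by passing to a spanning tree. Since $T$ is connected and has $n$ vertices $G_1,\dots,G_n$, it contains a spanning tree $T'=(V_T,E_{T'},\{V(e)\}_{e\in E_{T'}})$ on the same vertex set, where the pair of vertices $V(e)$ attached to each $e\in E_{T'}\subset E_T$ is inherited from $T$. By construction $T'$ has exactly $n-1$ edges, while $T$ may have more; hence each identification of vertices coming from an edge of $T'$ is also an identification coming from an edge of $T$.

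Translating this at the level of the induced partitions, we obtain $\pi_{T'}\leq \pi_T$ in $\mathcal{P}(\cup_i V_i)$, since each block of $\pi_{T'}$ is contained in a block of $\pi_T$ (the extra edges of $T\setminus T'$ can only further merge blocks). Applying Lemma \ref{Lemma: Order doesn't increase when doing quotient} to the graph $G=\cup_{i=1}^n G_i$ with the partitions $\pi_{T'}\leq \pi_T$ yields
\[
t(G^T)=t(G^{\pi_T})\leq t(G^{\pi_{T'}})=t(G^{T'}).
\]
Here I am using the fact that Lemma \ref{Lemma: Order doesn't increase when doing quotient} is really a statement about an arbitrary graph and two comparable partitions on its vertices, as its proof (identifying vertices cannot increase the count of two-edge connected components contributing to $t$) applies verbatim.

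Since $T'$ is a tree on $n$ vertices, Lemma \ref{Lemma: Order of exponent when inducing graph is tree} gives
\[
t(G^{T'})=\sum_{i=1}^n t(G_i)-n+1.
\]
Combining the two displayed inequalities yields the desired bound. The only mild subtlety to record clearly is the monotonicity step: one must observe that the conclusion of Lemma \ref{Lemma: Order doesn't increase when doing quotient} does not depend on the specific choice of the underlying oriented graph used there, so it applies to our $G=\cup_i G_i$ as well; otherwise the argument is a direct reduction with no further combinatorial content.
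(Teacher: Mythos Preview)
Your proof is correct and follows essentially the same approach as the paper: pass to a spanning tree $T'$ of $T$, observe that $\pi_{T'}\leq \pi_T$, and then combine the monotonicity of $t$ under coarsening (Lemma \ref{Lemma: Order doesn't increase when doing quotient}) with the exact formula for trees (Lemma \ref{Lemma: Order of exponent when inducing graph is a tree}). Your remark that the monotonicity lemma is really a statement about arbitrary graphs is a fair clarification, but otherwise the arguments coincide.
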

\begin{proof}
Since $T$ is connected it has a spanning tree $T_S$. It is clear by definition that $\pi_{T_S} \leq \pi_{T}$, as  (possibly) more vertices of $G$ are merged since  $T$ may have extra edges apart from those in $T_S$. Hence, from Lemmas \ref{Lemma: Order doesn't increase when doing quotient} and \ref{Lemma: Order of exponent when inducing graph is a tree}, it follows 
$$t(G^T)\leq t(G^{T_S}) = \sum_{i=1}^n t(G_i)-n+1.$$
\end{proof}

\section{Expansion of the cumulants} \label{Section_Genus expansion of the cumulants}

Let $r\geq 1$ and $m_1,\dots,m_r \in\mathbb{N}$. We let $M_i=\sum_{j=1}^im_j$ and $m=M_r=\sum_{j=1}^r m_j$. In order to provide an upper bound for the cumulants of $\text{Tr} P(\mathcal{X},\mathcal{D})$, we get started by providing an upper bound for cumulants of the form

\begin{equation}\label{Equation: Cumulants of monomials}
\C_r(\text{Tr}(X_{i_1}D_{j_1}\cdots X_{i_{m_1}}D_{j_{m_1}}),\dots,\text{Tr}(X_{i_{M_{r-1}+1}}D_{i_{M_{r-1}+1}}\cdots X_{i_m}D_{j_m})). 
\end{equation}

Then an upper bound for the cumulants of $\mathbf{X}$ can be obtained by the multi-linearity of the cumulants. For brevity,  we set the following notation  for the rest of the section.

\begin{notation}\label{Notation: Y_k}
For $1\leq k\leq r$, we let
$$Y_{k}\vcentcolon=X_{i_{M_{k-1}+1}}D_{j_{M_{k-1}+1}}\cdots X_{i_{M_k}}D_{j_{M_k}},$$
with the convention $M_0=0$. We also write $i=(i_1,\dots,i_m)$. Since we allow repetition of the indices,  we let $ker(i)\in \cP(m)$ be the partition given by $u$ and $v$ being in the same block of $ker(i)$ whenever $i_u=i_v$.
\end{notation}

\begin{notation} \label{def of gamma}
We denote by $\pm[m]\vcentcolon=\{-m,-m+1,\dots,-1,1,\dots,m\}$,  and for $1\leq k\leq r$,  we denote by $\pm[M_{k-1}+1,M_k]$ the set $\{M_{k-1}+1,\dots,M_k,-(M_{k-1}+1),\dots,-M_k\}.$
We further let $\gamma \in S_m$ be the permutation with cycle decomposition,
$$(1,\dots,m_1)(m_1+1,\dots,m_1+m_2)\cdots (m_1+\dots +m_{r-1}+1,\dots,m).$$
\end{notation}

Observe that for $1\leq k\leq r$,
\begin{eqnarray*}
\text{Tr}(Y_k)=N^{-m_k/2}\sum_{\psi: \pm[M_{k-1}+1,M_k]\rightarrow [N]}\prod_{l=M_{k-1}+1}^{M_k} x_{\psi(l),\psi(-l)}^{(i_l)}d_{\psi(-l),\psi(\gamma(l))}^{(j_l)}.
\end{eqnarray*}

Further, for $\psi: \pm[m]\rightarrow [N]$ and $1\leq k\leq r$, we denote
$$Z_k(\psi)\vcentcolon=\prod_{l=M_{k-1}+1}^{M_k} x_{\psi(l),\psi(-l)}^{(i_l)}d_{\psi(-l),\psi(\gamma(l))}^{(j_l)}.$$
With these notations, we have 
\begin{eqnarray*}
\C_r(\text{Tr}(Y_1),\dots,\text{Tr}(Y_r)) &=& N^{-m/2}\sum_{\psi: \pm[m]\rightarrow [N]}\C_r(Z_1(\psi),\dots,Z_r(\psi)).
\end{eqnarray*}
Now, using the multi-linearity of the cumulants and the fact that $d_{i,j}^{(k)}$ are all deterministic, we obtain

\begin{eqnarray*}
\C_r(\text{Tr}(Y_1),\dots,\text{Tr}(Y_r)) &=& N^{-m/2}\sum_{\psi: \pm[m]\rightarrow [N]}\mathbf{D}(\psi)\C_r(x_1(\psi),\dots,x_r(\psi)),
\end{eqnarray*}
with
\begin{align*} \mathbf{D}(\psi)=\prod_{l=1}^m d_{\psi(-l),\psi(\gamma(l))}^{(j_l)},\qquad x_k(\psi)=\prod_{l=M_{k-1}+1}^{M_k}x_{\psi(l),\psi(-l)}^{(i_l)}
\end{align*}
for any $1\leq k\leq r$. Since our matrices are permutation invariant according to Definition \ref{Definition: Wigner Matrix}, the quantity $\C_r(x_1(\psi),\dots,x_r(\psi))$ depends only on  $ker(\psi)$, which is the partition given by $u$ and $v$ being in the same block if and only if $\psi(u)=\psi(v)$. Thus,

\begin{eqnarray*}
\C_r(\text{Tr}(Y_1),\dots,\text{Tr}(Y_r)) = N^{-m/2}\sum_{\pi\in \cP(\pm[m])}\left[\sum_{\substack{\psi: \pm[m]\rightarrow [N] \\ ker(\psi)=\pi}}\mathbf{D}(\psi)\right]\C_r(x_1(\psi^\pi),\dots,x_r(\psi^\pi)),
\end{eqnarray*}
where $\psi^\pi$ is any function $\psi: \pm[m] \rightarrow [N]$ such that $ker(\psi)=\pi$.

Now, we invoke \cite[Theorem 11.30]{NS} to get the expression
\begin{eqnarray}\label{Equation: Cumulant expansion}
&&\C_r(\text{Tr}(Y_1),\dots,\text{Tr}(Y_r)) = \nonumber\\
&& N^{-m/2}\sum_{\pi\in \cP(\pm[m])}\left[\sum_{\substack{\psi: \pm[m]\rightarrow [N] \\ ker(\psi)=\pi}}\mathbf{D}(\psi)\right]\sum_{\substack{\tau\in \cP(m) \\ \tau\vee\gamma=1_m}}\C_\tau(x_{\psi^\pi(1),\psi^\pi(-1)}^{(i_1)},\dots,x_{\psi^\pi(m),\psi^\pi(-m)}^{(i_m)}) \nonumber\\
&=& N^{-m/2}\sum_{\pi\in \cP(\pm[m])}\sum_{\substack{\tau\in \cP(m) \\ \tau\vee\gamma=1_m}}\left[\sum_{\substack{\psi: \pm[m]\rightarrow [N] \\ ker(\psi)=\pi}}\mathbf{D}(\psi)\right]\C_\tau(x_{\psi^\pi(1),\psi^\pi(-1)}^{(i_1)},\dots,x_{\psi^\pi(m),\psi^\pi(-m)}^{(i_m)}).
\end{eqnarray}

For each $\pi\in \cP(\pm[m])$,  the term 
$$N^{-m/2}\sum_{\substack{\psi: \pm [m]\rightarrow [N] \\ ker(\psi)=\pi}}\mathbf{D}(\psi)$$ 
determines the $N$-order of each summand.  Hence, with the help of this expression, we do an expansion of the cumulant in terms of the order of $N$. Furthermore, for some cases (like when all deterministic matrices are the identity) this term is a polynomial in $1/N$ and hence $\C_r(\text{Tr}(Y_1),\dots,\text{Tr}(Y_r))$ can be expressed as a polynomial in $1/N$.  This expansion as a polynomial in $1/N$ is usually referred as a genus expansion of the cumulants. 
% In Subsection \ref{Subsection: Genus expansion of GUE},  we get the genus expansion of the GUE case. 
The leading term in this expansion is of particular interest in free probability; see \cite[Section 2.3]{CMSS}. Some research has been done in this direction, see for example \cite{MM} for the Wigner model, \cite{MN} for the Wishart model and \cite{SP} for the SYK model. We also refer to the recent work \cite{F} on a truncated (up to a fixed order) genus expansion of the expectation of the trace of smooth function of polynomial of GUE and deterministic matrices. 

\begin{remark}
It is important to observe that both $\mathbf{D}(\psi)$ and $\gamma$ depend on $r$. We drop the $r$ dependence to make the notation more compact.
\end{remark}

\section{Upper bounds for $t(\cdot)$}\label{Upper bounds for t}

Let us remind that one of our goals is to find an upper bound for the cumulants of the form (\ref{Equation: Cumulants of monomials}). Thanks to (\ref{Equation: Cumulant expansion}) this can be done by finding upper bounds for the quantities
\begin{equation}\label{Equation: Deterministic sum asociated to Gaussian case depending on tau when equality}
\sum_{\substack{\psi: \pm [m]\rightarrow [N] \\ ker(\psi)=\pi}}\mathbf{D}(\psi),
\end{equation}
for each $\pi\in \cP(\pm[m])$ for which the term $\C_\tau(x_{\psi^\pi(1),\psi^\pi(-1)}^{(i_1)},\dots,x_{\psi^\pi(m),\psi^\pi(-m)}^{(i_m)})$ in (\ref{Equation: Cumulant expansion}) is not zero. The structure of such a partition $\pi$ depends on the model. In the GUE case we will see that $\pi$ has a very simple expression,  while in the Wigner case it is more involved. Further, observe that (\ref{Equation: Deterministic sum asociated to Gaussian case depending on tau when equality}) has a form similar to (\ref{Equation: Sum associated to product of deterministic matrices}). However, here we index the vertices by $\pm [m]$ in order to distinguish $\mathcal{X}$ entries from $\mathcal{D}$ entries more conveniently, and instead of an inequality constraint for $\ker(\cdot)$, we have an equality constraint. Correspondingly, we introduce the graphs $D$ and $G$ below in Notation \ref{Notation: The graphs D and G}, where the former encodes the product of deterministic entries, and the latter encodes the product of both deterministic and random entries. We will also need $D^{\pi}$, i.e., the quotient graph of $D$.  Then, similarly to Lemma \ref{Lemma: Order of sums associated to deterministic matrices}, the upper bounds of (\ref{Equation: Deterministic sum asociated to Gaussian case depending on tau when equality}) and its variants boil down to the upper bounds for various $t(D^{\pi})$. In this section we aim to establish the necessary bounds for $t(D^{\pi})$ for the ensembles of  GUE, GOE and Wigner matrices. We will state the discussion for GUE/GOE in Section \ref{s.the gaussian case}, and state that for the Wigner case in Section \ref{s. Wigner case}. 

\subsection{The Gaussian case} \label{s.the gaussian case}
%In the GUE case, we do not have to estimate (\ref{Equation: Deterministic sum asociated to Gaussian case depending on tau when equality}) for each $\pi$ individually. Actually, when we sum over those $\pi$'s corresponding to nonzero $\C_\tau(x_{\psi^\pi(1),\psi^\pi(-1)}^{(i_1)},\dots,x_{\psi^\pi(m),\psi^\pi(-m)}^{(i_m)})$, we only need to consider the sum of the following form 
%\begin{equation}\label{Equation: Deterministic sum asociated to Gaussian case depending on tau}
%\sum_{\substack{\psi: \pm [m]\rightarrow [N] \\ ker(\psi)\geq \pi_\tau}}\mathbf{D}(\psi),
%\end{equation}
%where $\pi_\tau$ is a partition of $\pm [m]$ determined by a pairing $\tau\in \cP_2(m)$ (see Notation \ref{Notation: definition of pi_tau}). This will be further explained in Section \ref{Section: The order of GUE}. 
%
%Observe that (\ref{Equation: Deterministic sum asociated to Gaussian case depending on tau}) has exactly the same form as (\ref{Equation: Sum associated to product of deterministic matrices}). So, as we discussed before, it is enough to find an upper bound for $t(D^{\pi_\tau})$, in the spirit of Lemma \ref{Lemma: Order of sums associated to deterministic matrices}. Let us then formally introduce the notation needed.

Let us first introduce the graph notations needed. 

\begin{notation}\label{Notation: The graphs D and G}
We let $D=(V_D,E_D)$ be the oriented graph with set of vertices $\pm [m]$ and set of edges $e_k =(\gamma(k),-k)$ for $1\leq k\leq m$. Here $\gamma\in S_m$ is the permutation defined as in Section \ref{Section_Graph theory and combinatorics}. We let $G=(V_G,E_G)$ be the graph with set of vertices $\pm [m]$ and set of edges 
$$\{\gamma(k),-k\},\{k,-k\},$$
for $1\leq k\leq m$. We use the notation
\begin{align*}
e_k^D=\{\gamma(k),-k\},\qquad e_k^X=\{k,-k\},
\end{align*}
as the edges $e_k^D$ label the deterministic matrices while the edges $e_k^X$ label the random matrices.
\end{notation}

\begin{notation}\label{Notation: definition of pi_tau}
For an even number $m$ and a pairing $\tau\in \cP_2(m)$, we let $\pi_\tau$ be the pairing of $\cP_2(\pm [m])$ whose blocks are $\{u,-v\}$ and $\{v,-u\}$ whenever $\{u,v\}$ is a block of $\tau$.
\end{notation}

In the GUE case, we do not have to estimate (\ref{Equation: Deterministic sum asociated to Gaussian case depending on tau when equality}) for each $\pi$ individually. Actually, when we sum over those $\pi$'s corresponding to nonzero $\C_\tau(x_{\psi^\pi(1),\psi^\pi(-1)}^{(i_1)},\dots,x_{\psi^\pi(m),\psi^\pi(-m)}^{(i_m)})$, we only need to consider the sum of the following form 
\begin{equation}\label{Equation: Deterministic sum asociated to Gaussian case depending on tau}
\sum_{\substack{\psi: \pm [m]\rightarrow [N] \\ ker(\psi)\geq \pi_\tau}}\mathbf{D}(\psi).
\end{equation}
 This will be further explained in Section \ref{Section: The order of GUE}. 

Observe that (\ref{Equation: Deterministic sum asociated to Gaussian case depending on tau}) has exactly the same form as (\ref{Equation: Sum associated to product of deterministic matrices}). So, as we discussed before, it is enough to find an upper bound for $t(D^{\pi_\tau})$, in the spirit of Lemma \ref{Lemma: Order of sums associated to deterministic matrices}.

\begin{theorem}\label{Theorem: Order of GUE case for t()}
Let $m$ be even, $\tau\in \cP_2(m)$ be such that $\tau\vee\gamma=1_m$ and $\pi_\tau$ as in Notation \ref{Notation: definition of pi_tau}. Then,
$$t(D^{\pi_\tau})\leq m/2+2-r.$$
\end{theorem}

We will prove Theorem \ref{Theorem: Order of GUE case for t()} by induction on $r$. We start from the initial case $r=1$.

\begin{lemma}\label{Theorem: Order of GUE case for t() base case}
Let $m$ be even, $\tau\in \cP_2(m)$ and $\pi_\tau$ as in Notation \ref{Notation: definition of pi_tau}. Let $D$ be the graph of Notation \ref{Notation: The graphs D and G} with $r=1$ i.e., $\gamma=(1,\dots,m)$. Then,
$$t(D^{\pi_\tau})\leq m/2+1.$$
\end{lemma}

\begin{proof}
For brevity let us denote $\pi=\pi_\tau$ within this proof. Observe that any block $\{u,-v\}$ of $\pi$ which is a vertex of $D^\pi$ is adjacent to the two edges $e_v=(\gamma(v),-v)$ and $e_{\gamma^{-1}(u)}=(u,-\gamma^{-1}(u))$. So every vertex of $D^\pi$ has multiplicity $2$, and hence every connected component of $D^{\pi}$ is a cycle, which means that $t(D^\pi)$ is just the number of cycles of $D^\pi$ (or its number of connected components). As pointed out in Remark \ref{Remark: The forest of two edge connected components doesnt care about orientation}, we can forget the orientation of the edges of $D^\pi$, and instead work with the unoriented graph $\underline{D^\pi}$ which is still denoted by $D^\pi$. 

We will prove that the number of cycles of $D^\pi$ is at most $m/2+1$ by induction on $m$. The base case $m=2$ follows easily, as $D^\pi$ consists of two cycles, and hence $m/2+1=2/2+1=2$. Now let us assume $t(D^\pi)\leq n$ where $m=2(n-1)$. We aim to prove the result  for $m=2n$. Note that the edges $e_k^D$ of the graph $G$ are the same as the edges of the graph $D$. On the other hand, we may regard our pairing $\tau$ as a pairing of the edges $e_k^X$ of $G$. The latter means that, given a pairing $\tau$ of the edges $e_k^X$ of $G$,  the graph $D^\pi$ is the same as the graph $G^\pi$ restricted to its set of edges $(e_k^D)^\pi$ (for an example see Figure \ref{Figure: Quotient graph of G}). Let $B=\{u,v\}\in \tau$ be a block of $\tau$. We define the graph $G_B$ as the graph with vertices $\pm[m]\setminus \{u,-u,v,-v\}$, and set of edges the same as the edges of $G$ except for the edges $e_{u}^X,e_v^X,e_u^D$ and $e_v^D$, which are removed, and the edges $e_{\gamma^{-1}(u)}^D,e_{\gamma^{-1}(v)}^D$,  which are redefined as follows
\begin{eqnarray*}
e_{\gamma^{-1}(u)}^D = \{\gamma(u),-\gamma^{-1}(u)\}, \qquad e_{\gamma^{-1}(v)}^D = \{\gamma(v),-\gamma^{-1}(v)\}
\end{eqnarray*}
whenever $v\neq \gamma(u)$ and $u\neq \gamma(v)$. If $\gamma(u)=v$ then $G_B$ is defined as before except for $e_{\gamma^{-1}(u)}^D$, which is redefined as
\begin{eqnarray*}
e_{\gamma^{-1}(u)}^D &=& \{\gamma(v),-\gamma^{-1}(u)\}.
\end{eqnarray*}
Similarly if $\gamma(v)=u$ then only the edge $e_{\gamma^{-1}(v)}^D$ is redefined as
\begin{eqnarray*}
e_{\gamma^{-1}(v)}^D &=& \{\gamma(u),-\gamma^{-1}(v)\}.
\end{eqnarray*} 
The graph $G_B$ is a cycle with $2n-2$ edges. If $\tau^\prime$ is the pairing $\tau$ without the block $B$, then $\tau^\prime$ is a pairing of the edges $e_k^X$ of $G_B$. The pairing $\tau^\prime$ determines a partition $\pi^\prime$ which is the same as $\pi$ after we removed the blocks $\{u,-v\}$ and $\{v,-u\}$. If $D_B$ is the graph $G_B$ restricted to the set of edges $e_k^D$ then by induction hypothesis we know that the graph $D_B^{\pi^\prime}$ satisfies
$$t(D_B^{\pi^\prime})\leq n.$$
We claim that,
$$t(D^\pi)\leq t(D_B^{\pi^\prime})+1.$$
To prove this, observe that the graph $D^{\pi}$ and the graph $D_B^{\pi^\prime}$ have the same vertices and edges except for the vertices of $D^\pi$ that are removed and the edges that are redefined. This means that we can easily obtain the graph $D_B^{\pi^\prime}$ from the graph $D^\pi$ by removing these vertices and changing the edges that were redefined. If $v=\gamma(u)$, then the graph $D_B^{\pi^\prime}$ can be obtained from the graph $D^\pi$ by removing the vertices $\{u,-v\}$ and $\{v,-u\}$, and the edges $e_u^\pi$ and $e_v^\pi$. Further, the edge $e_{\gamma^{-1}(u)}^\pi$ which connects $\{-\gamma^{-1}(u),\tau(\gamma^{-1}(u))\}$ and $\{u,-v\}$ in $D^\pi$ now instead connects $\{-\gamma^{-1}(u),\tau(\gamma^{-1}(u))\}$ and $\{\gamma(v),-\tau(\gamma(v))\}$ in $D_B^{\pi^\prime}$. Observe that the cycle of $D^\pi$ that contains $e_{\gamma^{-1}(u)}^\pi$ contains the path
$$\{\gamma(v),-\tau(\gamma(v))\}\overset{e_v^\pi}{-} \{u,-v\} \overset{e_{\gamma^{-1}(u)}^\pi}{-} \{-\gamma^{-1}(u),\tau(\gamma^{-1}(u))\}.$$
But as we mentioned earlier, the edge $e_{\gamma^{-1}(u)}^\pi$ of $D_B^{\pi^\prime}$ connects $\{-\gamma^{-1}(u),\ab \tau(\gamma^{-1}(u))\}$ and $\{\gamma(v),-\tau(\gamma(v))\}$, which means that the path described above becomes the path
$$\{\gamma(v),-\tau(\gamma(v))\} \overset{e_{\gamma^{-1}(u)}^\pi}{-} \{-\gamma^{-1}(u),\tau(\gamma^{-1}(u))\}.$$
So the cycle that contains $e_{\gamma^{-1}(u)}^\pi$ in $D^\pi$ is the same as the cycle that contains $e_{\gamma^{-1}(u)}^\pi$ in $D_B^{\pi^\prime}$, with only one vertex and edge less. Finally, observe that $e_u^\pi$ is a loop of $D^\pi$ at the vertex $\{-u,v\}$. Therefore, $D^\pi$ has exactly one cycle more than $D_B^{\pi^\prime}$. The case $u=\gamma(v)$ is analogous, so we are reduced to check the case when $u\neq\gamma(v)$ and $v\neq \gamma(u)$. In this case, observe that the edges $e_v^\pi$ and $e_{\gamma^{-1}(u)}^\pi$ are in the same cycle of $D^\pi$, as they are adjacent to the same vertex $\{u,-v\}$. Moreover, this cycle also contains the path
$$\{\gamma(v),-\tau(\gamma(v))\} \overset{e_v^\pi}{-} \{u,-v\} \overset{e_{\gamma^{-1}(u)^\pi}}{-} \{-\gamma^{-1}(u),\tau(\gamma^{-1}(u))\}.$$
Similarly, $e_u^\pi$ and $e_{\gamma^{-1}(v)}^\pi$ are in the same cycle of $D^\pi$, as they are adjacent to the same vertex $\{v,-u\}$ and this cycle contains the path
$$\{\gamma(u),-\tau(\gamma(u))\} \overset{e_u^\pi}{-} \{v,-u\} \overset{e_{\gamma^{-1}(v)^\pi}}{-} \{-\gamma^{-1}(v),\tau(\gamma^{-1}(v))\}.$$
On the other hand, in the graph $D_B^{\pi^\prime}$, the edge $e_{\gamma^{-1}(u)}$ connects $\{-\gamma^{-1}(u),\ab \tau(\gamma^{-1}(u))\}$ and $\{\gamma(u),-\tau(\gamma(u))\}$. Similarly, the edge $e_{\gamma^{-1}(v)}$ connects $\{-\gamma^{-1}(v),\tau(\gamma^{-1}(v))\}$ and $\{\gamma(v),-\tau(\gamma(v))\}$. 
If the cycles of $D^\pi$ that contain $e_u^\pi$ and $e_v^\pi$ are distinct, then in the graph $D_B^{\pi^\prime}$ these cycles are merged into a single one and therefore $D_B^{\pi^\prime}$ has one cycle less than $D^\pi$. If the cycles are the same, then we have two possible scenarios. Either there is a path from $\{-\gamma^{-1}(u),\tau(\gamma^{-1}(u))\}$ to $\{\gamma(u),-\tau(\gamma(u))\}$, or there is a path from $\{-\gamma^{-1}(u),\tau(\gamma^{-1}(u))\}$ to $\{-\gamma^{-1}(v),\tau(\gamma^{-1}(v))\}$. In the former case $D_B^{\pi^\prime}$ has one cycle more than $D^\pi$ as the cycles that contains $e_{\gamma^{-1}(u)}^\pi$ and $e_{\gamma^{-1}(v)}^\pi$ of $D_B^{\pi^\prime}$ are distinct. In the latter case, $D_B^{\pi^\prime}$ has the same number of cycles than $D^\pi$ as the cycles that contains $e_{\gamma^{-1}(u)}^\pi$ and $e_{\gamma^{-1}(v)}^\pi$ of $D_B^{\pi^\prime}$ are the same. In either case $t(D^\pi)\leq t(D_B^{\pi^\prime})+1$. We conclude by using the induction hypothesis
$$t(D^\pi)\leq t(D_B^{\pi^\prime})+1\leq n+1.$$
\end{proof}
Next, we proceed with the proof of Theorem \ref{Theorem: Order of GUE case for t()} for general $r$. 
\begin{proof}[Proof of Theorem \ref{Theorem: Order of GUE case for t()}.] We proceed by induction on $r$. The case $r=1$ is completed thanks to Lemma \ref{Theorem: Order of GUE case for t() base case}. We assume it is true for $r-1$ and  prove it for $r$. Since $\tau\vee\gamma=1_m$,  there exists a block $B=\{u,v\}$ of $\tau$ such that $1\leq u\leq m_1$ and $m_1<v$. Let us assume without loss of generality that $1\leq u\leq m_1$ and $m_1+1\leq v\leq m_1+m_2$. We proceed similarly as in Lemma \ref{Theorem: Order of GUE case for t() base case}. Let $G_B$ be the graph with vertices $\pm[m]\setminus\{u,v,-u,-v\}$ and set of edges the same as the edges of $G$ except for $e_u^X,e_u^D,e_v^X$ and $e_v^D$ which are removed and the edges $e_{\gamma^{-1}(v)}^D,e_{\gamma^{-1}(u)}^D$ which are defined as
\begin{eqnarray*}
e_{\gamma^{-1}(v)}^D = \{\gamma(u),-\gamma^{-1}(v)\} \qquad
e_{\gamma^{-1}(u)}^D = \{\gamma(v),-\gamma^{-1}(u)\}.
\end{eqnarray*}
In this way, $G_B$ is now a graph with $r-1$ cycles and $2m-4$ edges. If $\tau^\prime$ is the pairing with blocks the same as $\tau$ except for $\{u,v\}$ then $\tau^\prime$ determines a partition $\pi^\prime$ with blocks the same as $\pi_\tau$(which we simply denote by $\pi$) except for the blocks $\{u,-v\}$ and $\{v,-u\}$. By induction hypothesis, we know that,
$$t(D_B^{\pi^\prime})\leq \frac{m-2}{2}+2-r+1=m/2+2-r.$$
To conclude, it is enough to observe that the graph $D_B^{\pi^\prime}$ can be obtained by removing the vertices $\{u,-v\}$ and $\{v,-u\}$ of the graph $D^\pi$, removing the edges $(e_u^D)^\pi$ and $(e_v^D)^\pi$ and changing the edges $(e_{\gamma^{-1}(u)}^D)^\pi$ and $(e_{\gamma^{-1}(v)}^D)^\pi$ which connect now the vertices $\{\gamma(v),-\tau(\gamma(v))\}$ to $\{-\gamma^{-1}(u),\tau(\gamma^{-1}(u))\}$ and the vertices $\{\gamma(u),-\tau(\gamma(u))\}$ to $\{-\gamma^{-1}(v),\tau(\gamma^{-1}(v))\}$ respectively. However in the graph $D^\pi$ these edges connect the vertices $\{u,-v\}$ to $\{-\gamma^{-1}(u),\tau(\gamma^{-1}(u))\}$ and the vertices $\{v,-u\}$ to $\{-\gamma^{-1}(v),\tau(\gamma^{-1}(v))\}$ respectively. The latter means that the cycle of $D^\pi$ that contains the edge $(e_v^D)^\pi$ has a path of the form
$$\{\gamma(v),-\tau(\gamma(v))\}\overset{(e_v^D)^\pi}{-} \{-v,u\} \overset{(e_{\gamma^{-1}(u)}^D)^\pi}{-} \{-\gamma^{-1}(u),\tau(\gamma^{-1}(u))\}.$$
While in the graph $D_B^{\pi^\prime}$ this path becomes
$$\{\gamma(v),-\tau(\gamma(v))\} \overset{(e_{\gamma^{-1}(u)}^D)^\pi}{-} \{-\gamma^{-1}(u),\tau(\gamma^{-1}(u))\}.$$
This proves that this cycle is unchanged, and we just decreased the number of vertices in the cycle by $1$. The same reasoning applies to the other cycle containing $(e_v^D)^\pi$ in $D^\pi$. So the number of cycles is unchanged which means $t(D_B^{\pi^\prime})=t(D^\pi)$, therefore,
$$t(D^\pi)=t(D_B^{\pi^\prime})\leq m/2+2-r.$$
\end{proof}

\begin{example}\label{Example: The quotient graph of D}
To illustrate the graphs that appear in the proof of Theorem \ref{Theorem: Order of GUE case for t()}, let us consider the following example. Let $r=2$ and $m_1=8$ and $m_2=6$ so that $m=14$. Let $\tau$ be the pairing
$$\{1,8\},\{2,13\},\{3,5\},\{4,7\},\{6,10\},\{9,14\},\{11,12\}.$$
Therefore,
\begin{eqnarray*}
\pi & \vcentcolon & = \pi_\tau=\{1,-8\},\{-1,8\},\{2,-13\},\{-2,13\},\{3,-5\},\{-3,5\},\{4,-7\}, \\
&& \{-4,7\},\{6,-10\},\{-6,10\},\{9,-14\},\{-9,14\},\{11,-12\},\{-11,12\}.
\end{eqnarray*}
The graph $G$ consists of two cycles while the graph $G^{\pi_\tau}$ has $14$ vertices indexed by the blocks of $\pi_\tau$. The graph $D^{\pi_\tau}$, which is the same as the graph $G^{\pi_\tau}$ restricted to the set of edges that label our deterministic matrices, has $5$ cycles given by
\begin{eqnarray*}
&& 1) \{1,-8\} \overset{e_8^{\pi_\tau}}{-} \{1,-8\}, \qquad  2) \{9,-14\} \overset{e_{14}^{\pi_\tau}}{-} \{9,-14\}, \qquad  3) \{-11,12\} \overset{e_{11}^{\pi_\tau}}{-} \{-11,12\}, \\
&& 4) \{-2,13\} \overset{e_{2}^{\pi_\tau}}{-} \{3,-5\} \overset{e_{5}^{\pi_\tau}}{-} \{6,-10\} \overset{e_{10}^{\pi_\tau}}{-} \{11,-12\} \overset{e_{12}^{\pi_\tau}}{-} \{-2,13\},\\
&& 5) \{-3,5\} \overset{e_{3}^{\pi_\tau}}{-} \{4,-7\} \overset{e_{7}^{\pi_\tau}}{-} \{-1,-8\} \overset{e_{1}^{\pi_\tau}}{-} \{2,-13\} \overset{e_{13}^{\pi_\tau}}{-} \{-9,14\} \overset{e_{9}^{\pi_\tau}}{-} \{-6,10\} \overset{e_{6}^{\pi_\tau}}{-} \{-4,7\} \overset{e_{4}^{\pi_\tau}}{-} \{-3,5\}.
\end{eqnarray*}
The graphs $G,D$ and $G^{\pi_\tau}$ and $D^{\pi_\tau}$ can be seen in Figure \ref{Figure: Quotient graph of G}.
\end{example}

\begin{figure}
    \centering
    \includegraphics[width=0.6\textwidth]{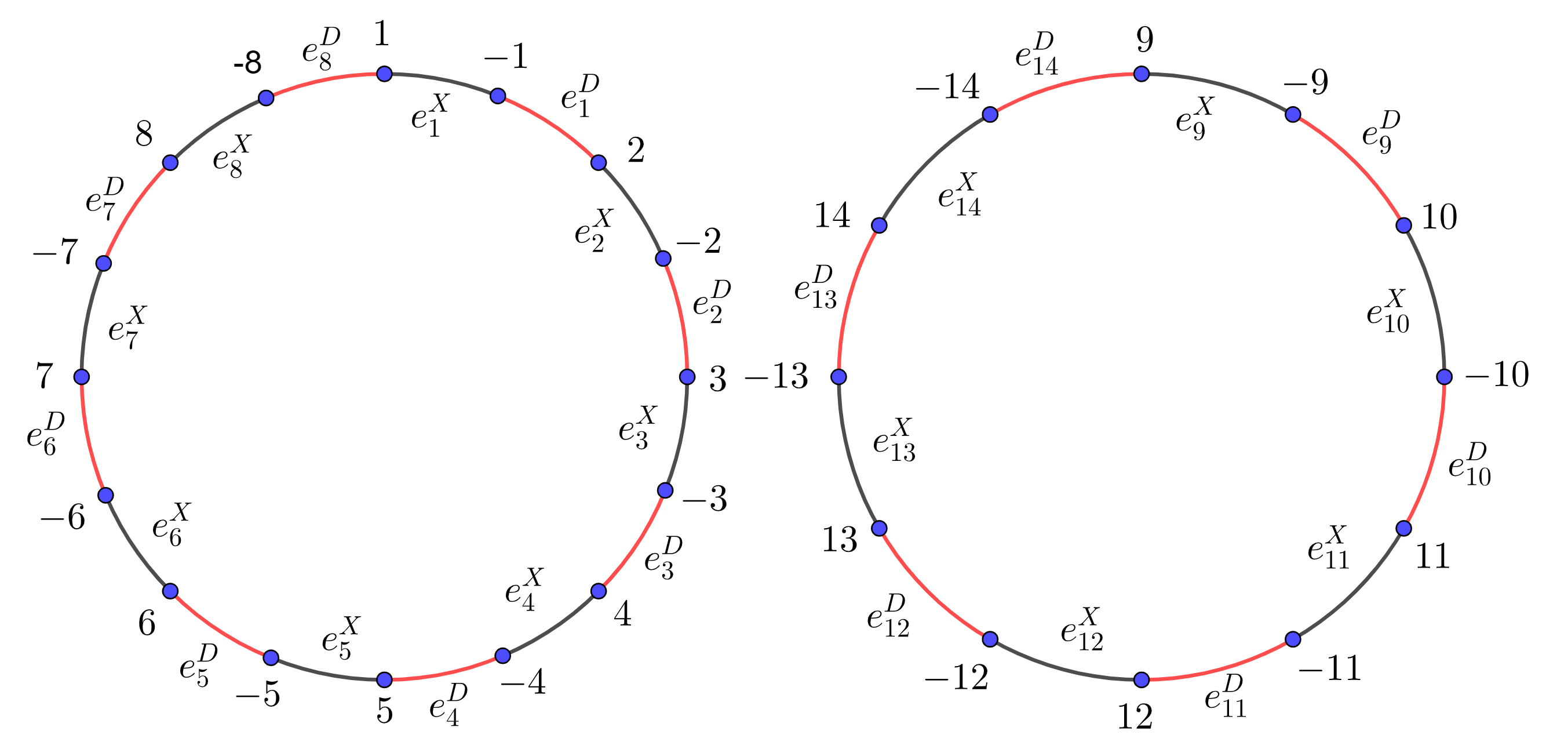}\\
    \text{a) The graph $G$.}\\
    \includegraphics[width=0.6\textwidth]{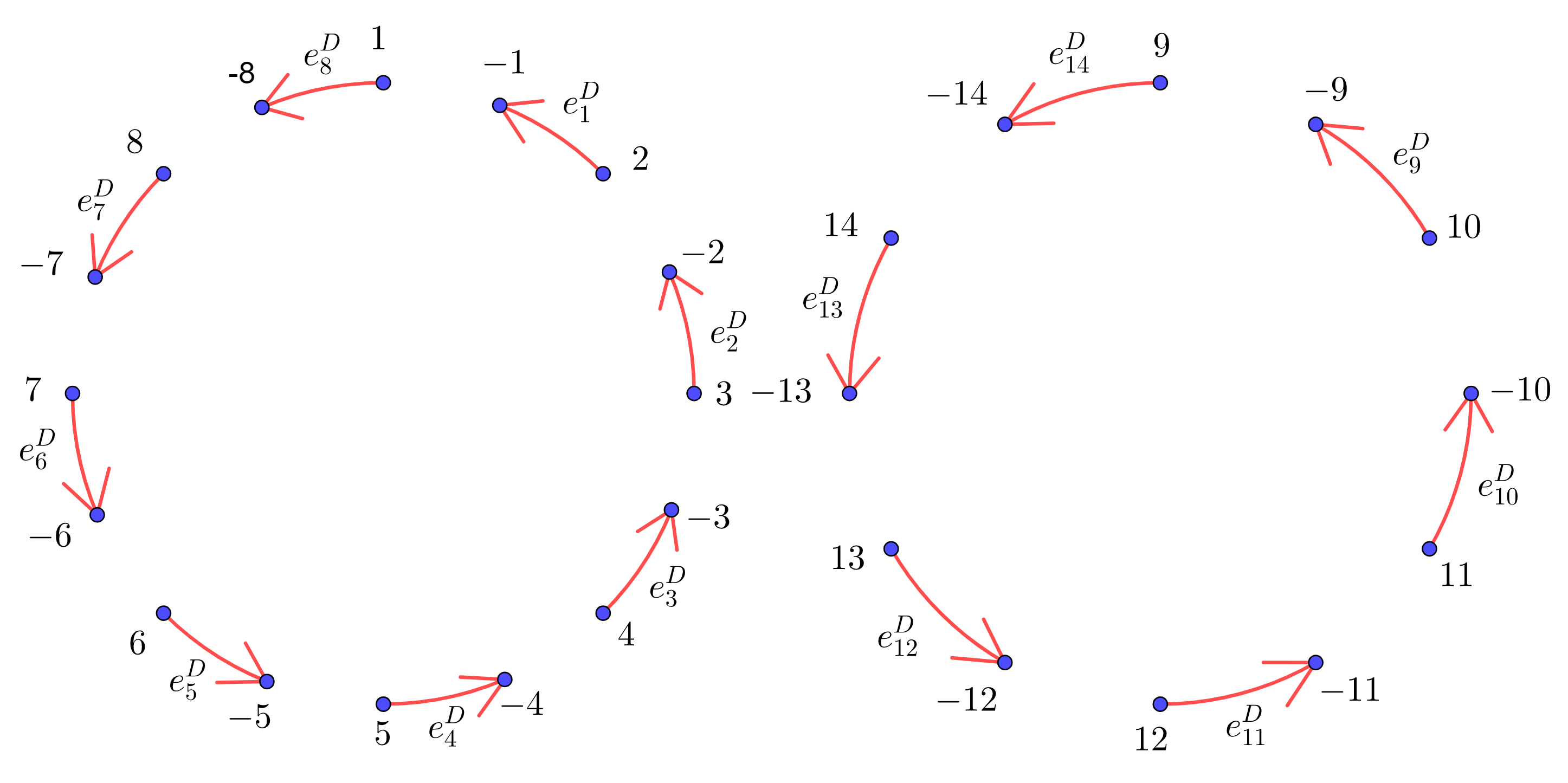}\\
    \text{b) The graph $D$.} \\
    \includegraphics[width=0.4\textwidth]{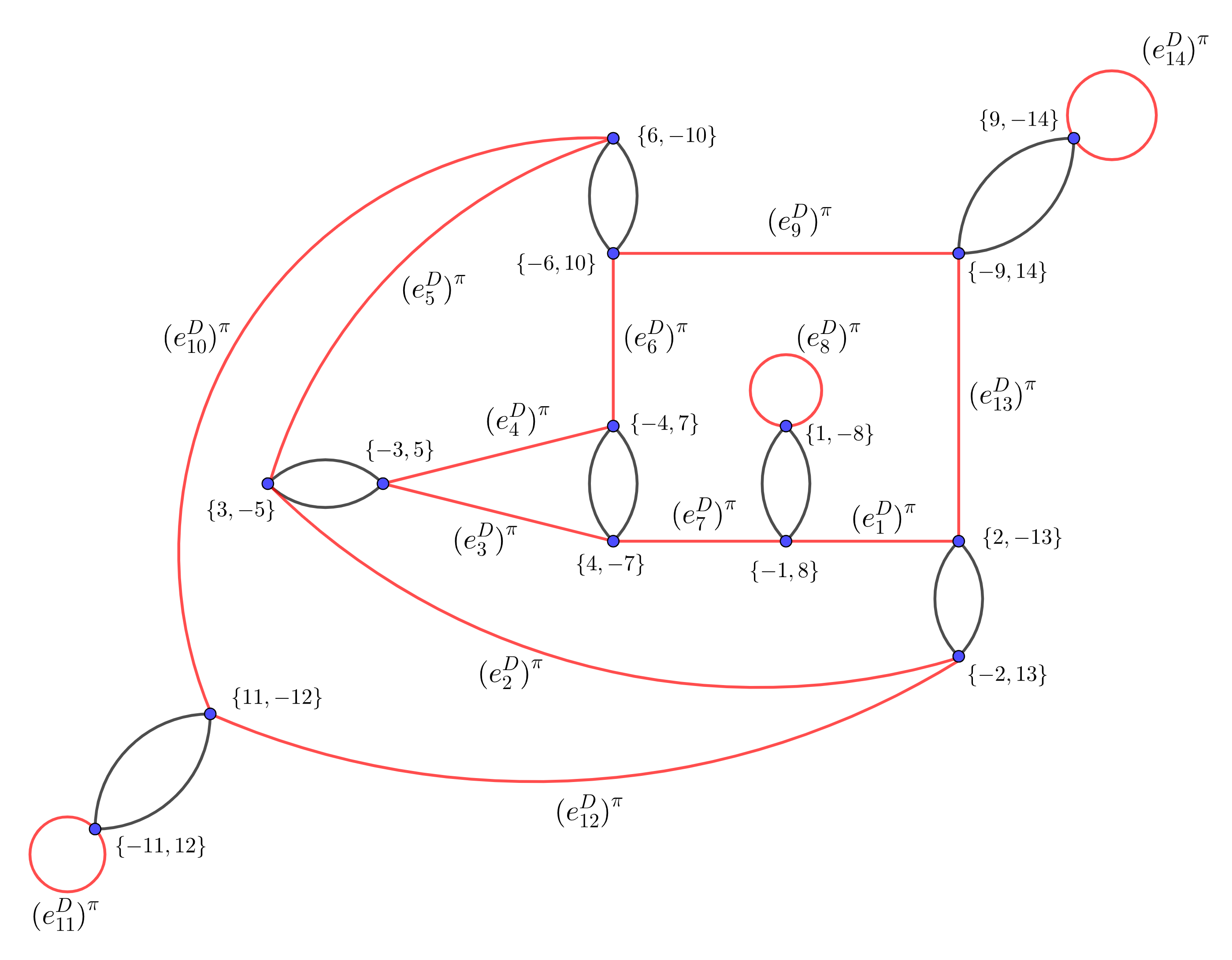}
    \includegraphics[width=0.4\textwidth]{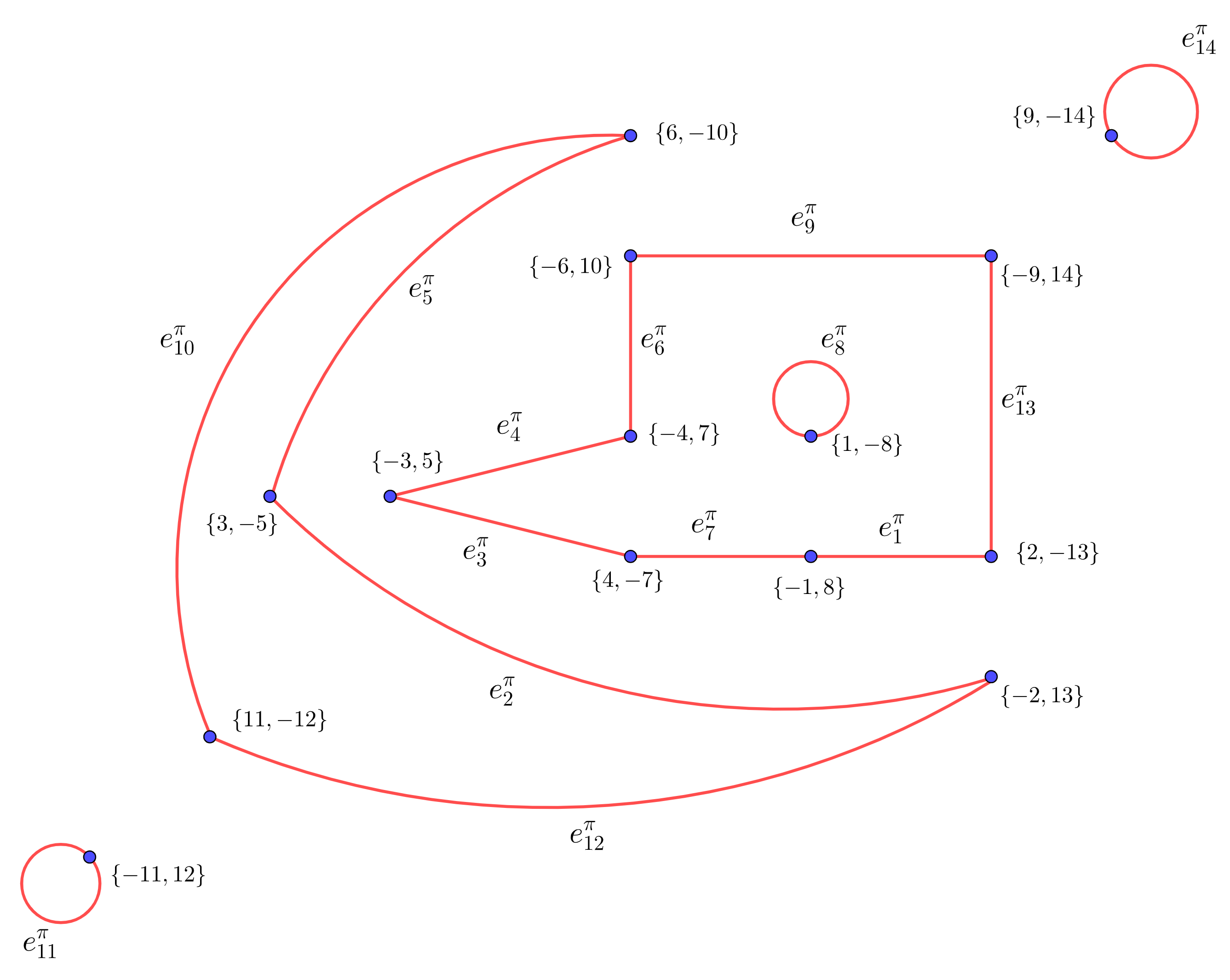}
    \text{c) The graphs $G^{\pi_\tau}$ and $D^{\pi_\tau}$.} \\
    \caption{The graphs of Example \ref{Example: The quotient graph of D}.}
    \label{Figure: Quotient graph of G}
\end{figure}

In the GOE case we use the fact that it can be written as $(Z_i+Z_i^{\top})/\sqrt{2}$ where $Z_i\sim GUE$. Because of this, we need to consider cumulants as in (\ref{Equation: Cumulants of monomials}) where we allow $X_i$ and $X_i^\top$. In the GUE case we are reduced to considering the partitions $\pi\geq \pi_\tau$, and therefore it is enough to bound (\ref{Equation: Deterministic sum asociated to Gaussian case depending on tau}). In this case, since we also have transposes, we need to consider partitions $\pi\geq \pi_\tau^\epsilon$ for some $\pi^\epsilon_\tau$ which will be defined below. Here $\epsilon$ is a vector parameter that describes whether we consider the transpose of a GUE. We call cumulants of the form (\ref{Equation: Cumulants of monomials}) but involving both $X_i$'s and the transposes of them the $\epsilon$-cumulants. We leave the detailed discussion on the reason of considering these partitions $\pi_\tau^\epsilon$ in the estimate of $\epsilon$-cumulants to Section \ref{Section: The order of GUE}. For this section we focus on the  combinatorics of the graphs. To this end, let us first formally introduce the definition of the partitions $\pi_\tau^\epsilon$.

\begin{notation}\label{Notation: definition of pi_tau for epsilon cumulants}
For an even number $m$, a pairing $\tau\in \cP_2(m)$ and $\epsilon\in \{1,\top\}^{m}$, we let $\pi_\tau^\epsilon$ be the pairing of $\cP_2(\pm [m])$ whose blocks are determined as follows: Whenever $\{u,v\}$ is a block of $\tau$ we let
\begin{enumerate}
    \item $\{-u,v\}$ and $\{v,-u\}$ be blocks of $\pi_\tau^\epsilon$ if either $\epsilon(u)=\epsilon(v)={\top}$ or $\epsilon(u)=\epsilon(v)=1$.
    \item $\{u,v\}$ and $\{-v,-u\}$ be blocks of $\pi_\tau^\epsilon$ if $\{\epsilon(u),\epsilon(v)\}=\{1,{\top}\}$.
\end{enumerate}
\end{notation}

The proof of Theorem \ref{Theorem: Order of GUE case for epsilon cumulants for t()} below will follow  the same ideas as the proof of Theorem \ref{Theorem: Order of GUE case for t()}, with the only difference that in this case our partition $\pi_\tau^\epsilon$ might be distinct to the partition $\pi_\tau$ and therefore further exploration is required for the quotient graph $D^{\pi_\tau^\epsilon}$. We will again use the double induction process.  For the induction on $r$, we again start from the case $r=1$. Then in order to prove the case $r=1$,  we proceed by induction on $m$. 

\begin{lemma}\label{Lemma: Order of GUE case base case for epsilon cumulants for t()}
Let $m$ be even, $\tau\in \cP_2(m)$, $\epsilon\in \{1,{\top}\}^m$ and $\pi_\tau^\epsilon$ as in Notation \ref{Notation: definition of pi_tau for epsilon cumulants}. Let $D$ be the graph of Notation \ref{Notation: The graphs D and G} with $r=1$ i.e., $\gamma=(1,\dots,m)$. Then,
$$t(D^{\pi_\tau^\epsilon})\leq m/2+1.$$
\end{lemma}
\begin{proof}
For brevity let us write $\pi=\pi_\tau^\epsilon$ within this proof. First, we apply induction on $m$. For a block $\{u,v\}\in \tau$ we know $\pi_\tau^\epsilon$ has blocks $\{u,-v\}$ and $\{v,-u\}$ or $\{u,v\}$ and $\{-u,-v\}$. The case $m=2$ means that either $\pi=\{1,-2\}\{2,-1\}$ or $\pi=\{1,2\}\{-1,-2\}$. In the former case $D^\pi$ has two cycles (each being a loop) while in the later case $D^\pi$ has one cycle. In either case $t(D^\pi)\leq 2=m/2+1$. Now we assume $t(D^\pi)\leq n$ where $m=2(n-1)$. We aim to prove it is true for $m=2n$. We proceed similarly as in Lemma \ref{Theorem: Order of GUE case for t() base case}. Let $B=\{u,v\}$ be a block of $\tau$. We define the graph $G_B$ as in Lemma \ref{Theorem: Order of GUE case for t() base case}. Note that the graph $G_B$ is a cycle with $2n-2$ edges. We let $\tau^\prime$ and $\pi^\prime$ be the pairing $\tau$ without the block $\{u,v\}$ and the pairing $\pi$ without the blocks that contains $\{u,v,-u,-v\}$, respectively. If $D_B$ is the graph $G_B$ restricted to the set of edges $e_k^D$, we know by induction hypothesis that
$$t(D_B^{\pi^\prime})\leq n.$$
As in Lemma \ref{Theorem: Order of GUE case for t() base case}, we claim $t(D^\pi)\leq t(D_B^{\pi^\prime})+1$. If $\{u,-v\}$ and $\{v,-u\}$ are blocks of $\pi$ then the proof follows exactly the same as in Lemma \ref{Theorem: Order of GUE case for t() base case}. So let us assume $\{u,v\}$ and $\{-u,-v\}$ are blocks of $\pi$. If $v=\gamma(u)$, observe that the cycle of $D^\pi$ that contains $e_{u}^\pi$ also contains $e_{\gamma^{-1}(u)}^\pi$ and $e_v^\pi$ and it has a path of the form
$$[-\gamma^{-1}(u)]_\pi\overset{e_{\gamma^{-1}(u)}^\pi}{-}\{u,v\}\overset{e_u^\pi}{-}\{-u,-v\}\overset{e_v^\pi}{-}[\gamma(v)]_\pi.$$
That is, $e_{\gamma^{-1}(u)}^\pi$ is adjacent to $e_u^\pi$ through the vertex $\{u,v\}$, and $e_u^\pi$ is adjacent to $e_v^\pi$ through the vertex $\{-u,-v\}$. Moreover, the edge $e_v^\pi$ is adjacent to the other vertex $[\gamma(v)]_{\pi}$. So when removing these vertices and the edges $e_u^\pi,e_v^\pi$ in $D_B^{\pi^\prime}$, we still have a cycle as the remaining edge $e_{\gamma^{-1}(u)}^\pi$ is by definition now connected to the vertex $[\gamma(v)]_{\pi}$. Thus $t(D_B^{\pi^\prime})=t(D^\pi)$. The case $u=\gamma(v)$ is analogous, so it remains to check the case $u\neq \gamma(v)$ and $v\neq \gamma(u)$. However, observe that this case proceeds exactly as in Lemma \ref{Theorem: Order of GUE case for t() base case}. Hence,
$$t(D^\pi)\leq t(D_B^{\pi^\prime})+1\leq n+1.$$
\end{proof}
Then we continue with the general $r$ case.
\begin{theorem}\label{Theorem: Order of GUE case for epsilon cumulants for t()}
Let $m$ be even, $\tau\in \cP_2(m)$ be such that $\tau\vee\gamma=1_m$, $\epsilon\in \{1,\top\}^m$ and $\pi_\tau^\epsilon$ as in Notation \ref{Notation: definition of pi_tau for epsilon cumulants}. Then,
$$t(D^{\pi_\tau^\epsilon})\leq m/2+2-r.$$
\end{theorem}
\begin{proof}
For brevity we write $\pi=\pi_\tau^\epsilon$ within this proof. We apply induction on $r$. The case $r=1$ is proved by Lemma \ref{Lemma: Order of GUE case base case for epsilon cumulants for t()} so we assume it is true for $r-1$ and prove it for $r$. Since $\tau\vee\gamma=1_m$, there exists a block of $B=\{u,v\}$ of $\tau$ such that $1\leq u\leq m_1$ and $v>m_1$. Let us assume that $m_1+1\leq v\leq m_1+m_2$. Again, given a block $\{u,v\}\in\tau$,  we know $\pi$ has blocks either $\{u,-v\}$ and $\{v,-u\}$ or $\{u,v\}$ and $\{-u,-v\}$. If $\{u,-v\}$ and $\{v,-u\}$ are blocks of $\pi$, we proceed as in Theorem \ref{Theorem: Order of GUE case for t()}. Then we are reduced to proving it for the case that $\{u,v\}$ and $\{-u,-v\}$ are blocks of $\pi$. In this case, we let $G_B$ be the graph with vertices $\pm[m]\setminus \{u,v,-u,-v\}$ and set of edges the same as $G$ except $e_u^X,e_v^X,e_v^D$ and $e_{\gamma^{-1}(u)}^D$ which are removed and the edges $e_u^D$ and $e_{\gamma^{-1}(v)}^D$ which are redefined as
\begin{eqnarray*}
e_u^D = \{\gamma(u),\gamma(v)\}, \qquad
e_{\gamma^{-1}(v)}^D = \{-\gamma^{-1}(u),-(v-1)\}.
\end{eqnarray*}
$G_B$ is now a graph with $r-1$ cycles and $2m-4$ edges. If $\tau^\prime$ is the pairing with blocks the same as $\tau$ except for $\{u,v\}$ then $\tau^\prime$ determines a partition $\pi^\prime$ whose blocks are the blocks of $\pi$ except for the blocks $\{u,v\}$ and $\{-u,-v\}$. By induction hypothesis we have,
$$t(D_B^{\pi^\prime})\leq \frac{m-2}{2}+2-r+1=m/2+2-r.$$
Now we conclude as in Theorem  \ref{Theorem: Order of GUE case for t()}. To get the graph $G_B^{\pi^\prime}$, we remove the vertices $\{u,v\}$ and $\{-u,-v\}$ of $G_B^\pi$ and the edges $e_{\gamma^{-1}(u)}^\pi$ and $e_v^\pi$. And we also change the edges $e_u^\pi$ and $e_{\gamma^{-1}(v)}^\pi$ according to how they were redefined. In $G^{\pi}$ the edge $e_u^\pi$ connects the vertices $[\gamma(u)]_{\pi}$ and $\{-u,-v\}$, however in $G_B^{\pi^\prime}$ it connects $[\gamma(u)]_{\pi}$ and $[\gamma(v)]_{\pi}$ instead. The cycle that contains $e_u^\pi$ in $G^\pi$ also contains $e_v^\pi$, and these are consecutive edges in the cycle both connected to the same $\{-u,-v\}$ vertex, while the other adjacent vertex of $e_v^\pi$ is precisely $[\gamma(v)]_{\pi}$. Hence this cycle is still a cycle in $G_B^{\pi^\prime}$. The same applies to the cycle that contains $e_{\gamma^{-1}(u)}^\pi$ and $e_{\gamma^{-1}(v)}^\pi$. Therefore $t(D_B^{\pi^\prime})=t(D^{\pi})$ which implies
$$t(D^\pi)=t(D_B^{\pi^\prime})\leq m/2+2-r.$$
\end{proof}

\subsection{The Wigner case} \label{s. Wigner case}

Unlike the Gaussian case for which it is enough to bound quantities of the form (\ref{Equation: Deterministic sum asociated to Gaussian case depending on tau}),  in the Wigner case,  we need to look into quantities like (\ref{Equation: Deterministic sum asociated to Gaussian case depending on tau when equality}) for each individual $\pi$. In order to find upper bounds for (\ref{Equation: Deterministic sum asociated to Gaussian case depending on tau when equality}) we appeal to Mobius inversion theorem and Lemma \ref{Lemma: Order of sums associated to deterministic matrices}. We explain the details in Section \ref{Section: The order of Wigner}, for this subsection we focus in finding upper bounds for $t(D^{\pi})$ which will be needed in the bound of (\ref{Equation: Deterministic sum asociated to Gaussian case depending on tau when equality}).

\begin{theorem}\label{Theorem: Order of Wigner case for t()}
Assume $r\geq 2$. Let $\pi\in \cP(\pm[m])$ and $\tau\in \cP(m)$ be such that $\tau\vee\gamma=1_m$ and
$$\C_\tau(x_{\psi^\pi(1),\psi^\pi(-1)}^{(i_1)},\dots,x_{\psi^\pi(m),\psi^\pi(-m)}^{(i_m)})\neq 0.$$
Let $D$ be the graph of Notation \ref{Notation: The graphs D and G}. Then,
$$t(D^{\pi})\leq m/2+1-r/2.$$
\end{theorem}

\begin{proof}
Let $G$ be the graph defined in Notation \ref{Notation: The graphs D and G}. Let $V=\{j_1,\dots,j_{2n}\}$ be a block of $\tau$. Since $\C_\tau(x_{\psi^\pi(1),\psi^\pi(-1)}^{(i_1)},\dots,\ab x_{\psi^\pi(m),\psi^\pi(-m)}^{(i_m)})\neq 0$, the variables $x^{i_{j_1}}_{\psi^\pi(j_1),\psi^\pi(-j_1)},\dots,x^{i_{j_{2n}}}_{\psi^\pi(j_{2n}),\psi^\pi(-j_{2n})}$ must  be all the same or the complex conjugate of each other, otherwise by independence of the entries and vanishing of mixed cumulants the cumulant is zero. The latter means that either $\pi$ has a block that contains $j_1,\dots,j_{2n},-j_1,\dots,-j_{2n}$, or $\pi$ has two blocks $A_V$ and $B_V$ such that for each $1\leq u\leq 2n$ either $j_u\in A_V$ and $-j_u\in B_V$ or $j_u\in B_V$ and $-j_u\in A_V$. In either case we want to prove that $t(D^\pi)\leq m/2+1-r/2$. We may assume that we are in the latter case, as the former case will follow from Lemma \ref{Lemma: Order doesn't increase when doing quotient}. Let us assume that for any block $V\in \tau$, $A_V$ always contains the positive values $j_1,\dots,j_{2n}$ while $B_V$ contains the negative ones $-j_1,\dots,-j_{2n}$. We will explain at the end why the same proof applies in other cases. By Proposition \ref{Proposition: the existence of the pairing sigma}, there exists a pairing $\sigma$ such that $\sigma\leq \tau$ and $s\leq r/2$, where $s=\#(\sigma\vee\gamma)$. Let $\pi_\sigma$ be the partition defined by the following: if $\{u,v\}\in \sigma$ is a block of $\sigma$ then we let $\{u,v\}$ and $\{-u,-v\}$ be blocks of $\pi_\sigma$. It is clear $\pi_\sigma \leq \pi$. 

Let $B_1,\dots,B_s$ be the blocks of $\sigma\vee\gamma$. For $1\leq i\leq s$, let $D_i$ be the graph $D$ restricted to the set of vertices $\pm B_i=:\{a,-a: a\in B_i\}$ and set of edges $e_k$ with $k\in B_i$. Each block $B_i$ is a union of blocks of $\sigma$ and $\gamma$, so we let $\sigma_i$ and $\gamma_i$ be $\sigma$ and $\gamma$ restricted to $B_i$ respectively. For each block $B_i$, we have $\sigma_i \vee \gamma_i =1_{|B_i|}$. The pairing $\sigma_i$ is a pairing of the matrix edges $e_k^X$ of $G_i$, where $G_i$ is the restriction of $G$ to $\pm B_i$. For each block $\{u,v\}$ of $\sigma_i$, the corresponding blocks of $\pi_{\sigma_i}$ are $\{u,v\}$ and $\{-u,-v\}$. Hence, thanks to Theorem \ref{Theorem: Order of GUE case for epsilon cumulants for t()}, we know 
\begin{equation}\label{Aux: Equation 1}
t(D_i^{\pi_{\sigma_i}})\leq m_i/2+2-r_i,
\end{equation}
where $r_i$ is the number of  cycles of $B_i$ and $m_i=|B_i|$. Let $T=(V_T,E_T,\{V(e)\}_{e\in E_T})$ be the graph of the graphs $D_1^{\pi_{\sigma_1}},\dots,D_s^{\pi_{\sigma_s}}$ given as follows:
\begin{enumerate}
    \item If $\{u_1,v_1\}$ and $\{u_2,v_2\}$ are blocks of $\sigma$ in the same block of $\tau$ with $\{u_i,v_i\}\in B_{j_i}$ for $i=1,2$, we let $e_1$ and $e_2$ be edges of $T$ connecting the vertices $D_{j_1}^{\pi_{\sigma_{j_1}}}$ and $D_{j_2}^{\pi_{\sigma_{j_2}}}$ of $T$.
    \item $V(e_1)=\{\{u_1,v_1\},\{u_2,v_2\}\}$, which is well defined  as $\{u_1,v_1\}\in \pi_{\sigma_{j_1}}$ and hence it is a vertex of $D_{j_1}^{\pi_{\sigma_{j_1}}}$. Similarly $\{u_2,v_2\}$ must be a vertex of $D_{j_2}^{\pi_{\sigma_{j_2}}}$.
    \item $V(e_2)=\{\{-u_1,-v_1\},\{-u_2,-v_2\}\}$,  which is well defined as the previous item. 
\end{enumerate}
Let $D=\cup_{i=1}^s D_i^{\pi_{\sigma_i}}$. Observe that for blocks $\{u_1,v_1\},\dots ,\{u_n,v_n\}$ of $\sigma$ in the same block of $\tau$ the partition $\pi_T$ has blocks $\{u_1,v_1,\cdots ,u_n,v_n\}$ and $\{-u_1,-v_1,\cdots ,-u_n,-v_n\}$, hence $\pi_T\leq \pi$. From Lemma \ref{Lemma: Order doesn't increase when doing quotient} it follows $t(D^\pi)\leq t(D^{\pi_T})=t(D^T)$. Moreover, since $\tau\vee\gamma=1_m$, then $T$ must be connected. Hence it follows from Lemma \ref{Corollary: Order of exponent when inducing graph is connected} that
$$t(D^{\pi})\leq t(D^{T})\leq \sum_{i=1}^s t(D_i^{\pi_{\sigma_i}})-s+1.$$
From (\ref{Aux: Equation 1}) we get
\begin{eqnarray*}
t(D^{\pi}) &\leq &  \sum_{i=1}^s [m_i/2+2-r_i]-s+1 \\
& = & m/2+2s-r-s+1\leq  m/2+1-r/2. 
\end{eqnarray*}
To finish the proof,  we observe that,  if $A_V$ and $B_V$ are more general in the sense that they do not consist of purely negative or positive values,  we only need to define $\pi_\sigma$ accordingly so that $\pi_\sigma\leq \pi$. Let us remind that in such a general setting of $A_V$ and $B_V$,   if $\{u,v\}$ is a block of $\sigma$ then $\pi_\sigma$ may have blocks $\{u,v\},\{-u,-v\}$ or $\{u,-v\},\{v,-u\}$. In both cases,  inequality (\ref{Aux: Equation 1}) is true. Then the remaining proof is the same as the special case of $A_V$ and $B_V$ discussed earlier.
\end{proof}

\section{Upper bounds for the cumulants}\label{Section: The order of GUE}

 With the aid of the upper bounds derived for $t(\cdot)$ in the last section, we can then prove our upper  bounds for the cumulants of the form (\ref{Equation: Cumulants of monomials}). This section is organized as follows. In Section \ref{Subsection: Nth order of GUE}, we derive a general upper bound for (\ref{Equation: Cumulants of monomials}) in the GUE case. In Section \ref{Subsection: uppper bound for epsilon cumulants}, we go one step further to derive the same upper bound, but allowing both GUE and its transpose in (\ref{Equation: Cumulants of monomials}). We then use this result to find upper bounds for the GOE case in Section \ref{Section: The order of GOE}. Finally, in Section \ref{Section: The order of Wigner}, we derive a general upper bound for the cumulants in the Wigner case. 
 
\subsection{General bound of the cumulants of the GUE case}\label{Subsection: Nth order of GUE}

From the definition of the high order moments and cumulants of a random matrix (cf. \cite[Section 2.3]{CMSS}),  we expect $\C_r(\text{Tr}(Y_1),\dots,\text{Tr}(Y_r))$ to be  of order $N^{2-r}$ when $r$ is fixed. This is known for a single GUE matrix. In this subsection we prove that the result still holds for an arbitrary polynomial of GUE's and deterministic matrices and further capture more precise $r$-dependence when $r$ could be $N$-dependent. 

Recall the definition of $\gamma$ from Notation \ref{def of gamma}, we have the following theorem.

\begin{lemma}\label{Corollary: Upper bound for cumulants of GUE}
Let $\mathcal{X}\sim GUE$. For any $r\geq 1$ and $m$ even,
\begin{equation}
|N^{r-2}\C_r(\text{Tr}(Y_1),\dots,\text{Tr}(Y_r))|\leq|\cP_2^i(m)| \prod_{k=1}^m ||D_{j_k}||,
\end{equation}
where
$$\cP_2^i(m)=\{\tau\in \cP_2(m): \tau\vee\gamma=1_m\text{ and }\tau\leq ker(i)\}.$$
In particular,
\begin{equation}
|N^{r-2}\C_r(\text{Tr}(Y_1),\dots,\text{Tr}(Y_r))|\leq m!! \prod_{k=1}^m ||D_{j_k}||.
\end{equation}
% Here $m!!=1\cdot 3\cdots m-1 = \frac{m!}{(m/2)!2^{m/2}}$.
\end{lemma}
\begin{proof}
From (\ref{Equation: Cumulant expansion}), we have
\begin{eqnarray*}
&&N^{r-2}\C_r(\text{Tr}(Y_1),\dots,\text{Tr}(Y_r)) = \\
&& \sum_{\substack{\tau\in \cP_2(m) \\ \tau\vee\gamma=1_m \\ \tau \leq ker(i)}}\sum_{\substack{\pi\in \cP(\pm[m]) \\ \pi\geq \pi_\tau}}N^{r-2-m/2} \left[\sum_{\substack{\psi: \pm[m]\rightarrow [N] \\ ker(\psi)=\pi}}\mathbf{D}(\psi)\right]\C_\tau(x_{\psi^\pi(1),\psi^\pi(-1)}^{(i_1)},\dots,x_{\psi^\pi(m),\psi^\pi(-m)}^{(i_m)}),
\end{eqnarray*}
because if $\C_\tau(\cdot)\neq 0$ we must have $\tau \leq ker(i)$, $\tau\in \cP_2(m)$,  and $x_{\psi^\pi(u),\psi^\pi(-u)}^{(i_u)}=x_{\psi^\pi(-v),\psi^\pi(v)}^{(i_v)}$ for $\{u,v\}\in \tau$. Hence, $[u]_\pi=[-v]_\pi$ and $[v]_\pi=[-u]_\pi$, which implies $\pi\geq \pi_\tau$ with $\pi_\tau$ as in Notation \ref{Notation: definition of pi_tau}. Further, if $\C_\tau(\cdot)\neq 0$ we have $\C_\tau(\cdot)=1$.  Therefore, 
\begin{eqnarray*}
N^{r-2}\C_r(\text{Tr}(Y_1),\dots,\text{Tr}(Y_r)) & = & \sum_{\substack{\tau\in \cP_2(m) \\ \tau\vee\gamma=1_m \\ \tau\leq ker(i)}}\sum_{\substack{\pi\in \cP(\pm[m]) \\ \pi\geq \pi_\tau}}N^{r-2-m/2} \sum_{\substack{\psi: \pm[m]\rightarrow [N] \\ ker(\psi)=\pi}}\mathbf{D}(\psi) \\
& = & \sum_{\substack{\tau\in \cP_2(m) \\ \tau\vee\gamma=1_m \\ \tau\leq ker(i)}}N^{r-2-m/2} \sum_{\substack{\psi: \pm[m]\rightarrow [N] \\ ker(\psi)\geq \pi_\tau}}\mathbf{D}(\psi).
\end{eqnarray*}
By Theorem \ref{Theorem: Order of GUE case for t()} and Lemma \ref{Lemma: Order of sums associated to deterministic matrices}, we know 
$$\left|N^{r-2-m/2} \sum_{\substack{\psi: \pm[m]\rightarrow [N] \\ ker(\psi)\geq\pi_\tau}}\mathbf{D}(\psi)\right| \leq N^{r-2-m/2}N^{m/2+2-r}\prod_{k=1}^m ||D_{j_k}||=\prod_{k=1}^m ||D_{j_k}||.$$ 
Hence, we can derive the upper bound
\begin{eqnarray*}
|N^{r-2}\C_r(\text{Tr}(Y_1),\dots,\text{Tr}(Y_r))| & \leq & 
\sum_{\substack{\tau\in \cP_2(m) \\ \tau\vee\gamma=1_m \\ \tau\leq ker(i)}}\left|N^{r-2-m/2} \sum_{\substack{\psi: \pm[m]\rightarrow [N] \\ ker(\psi)\geq \pi_\tau}}\mathbf{D}(\psi)\right| \\
& \leq & \sum_{\substack{\tau\in \cP_2(m) \\ \tau\vee\gamma=1_m \\ \tau\leq ker(i)}} \prod_{k=1}^m ||D_{j_k}|| = |\cP_2^i(m)|\prod_{k=1}^m ||D_{j_k}||.
\end{eqnarray*}
This concludes the proof. 
\end{proof}

\subsection{General bound of the $\epsilon$-cumulants}\label{Subsection: uppper bound for epsilon cumulants}

In Notation \ref{Notation: Y_k}, we consider the alternating product of GUE and deterministic matrices. In this subsection we go one step further. We now consider the alternating product of GUE, or its transpose and deterministic matrices. As we mentioned earlier, we will call cumulants of the form (\ref{Equation: Cumulants of monomials}) but involving both $X_i$'s and the transposes of them the $\epsilon$-cumulants.  The motivation to consider the $\epsilon$-cumulants is to provide upper bounds as in Lemma \ref{Corollary: Upper bound for cumulants of GUE} for the GOE case. This type of cumulants will appear naturally in Section \ref{Section: The order of GOE}. Let us give a formal definition of the $\epsilon$-cumulants. 

\begin{definition}\label{Definition: epsilon cumulants}
For $1\leq k\leq r$ and $\epsilon\in \{1,{\top}\}^m$, we let
$$Y_{k}^\epsilon \vcentcolon=X_{i_{M_{k-1}+1}}^{\epsilon(M_{k-1})}D_{j_{M_{k-1}+1}}\cdots X_{i_{M_k}}^{\epsilon(M_{k})}D_{j_{M_k}},$$
with the convention $M_0=0$. Here $\epsilon=(\epsilon(1),\dots,\epsilon(m))\in \{1,{\top}\}^m$. We call $\C_r(Tr(Y_1^{\epsilon}),\dots, Tr(Y_r^\epsilon))$ an $\epsilon$-cumulant. 
\end{definition}

The following lemma generalizes (\ref{Equation: Cumulant expansion}).

\begin{lemma}\label{Lemma: Cumulant expansion for epsilon cumulants}
Let $\mathcal{X}\sim GUE$. For any $r\geq 1$ the following holds
\begin{eqnarray}\label{Equation: Cumulant expansion for epsilon cumulants}
&&\C_r(Tr(Y_1^{\epsilon}),\dots, Tr(Y_r^\epsilon)) = \nonumber\\
&& N^{-m/2}\sum_{\pi\in \cP(\pm[m])}\sum_{\substack{\tau\in \cP(m) \\ \tau\vee\gamma=1_m}}\left[\sum_{\substack{\psi: \pm[m]\rightarrow [N] \\ ker(\psi)=\pi}}\mathbf{D}(\psi)\right]\C_\tau(x_{\psi^\pi(1),\psi^\pi(-1)}^{(i_1)}(\epsilon(1)),\dots,x_{\psi^\pi(m),\psi^\pi(-m)}^{(i_m)}(\epsilon(m))),
\end{eqnarray}
where $x_{i,j}^{(k)}(1)=x_{i,j}^{(k)}$ and $x_{i,j}^{(k)}({\top})=x_{j,i}^{(k)}$.
\end{lemma}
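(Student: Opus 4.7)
The plan is to repeat the derivation that produced formula (\ref{Equation: Cumulant expansion}) at the start of Section \ref{Section_Genus expansion of the cumulants}, modifying only the index bookkeeping needed to accommodate the possible transposes recorded by $\epsilon$. Expanding $\text{Tr}(Y_k^\epsilon)$ by the definition of the trace of a matrix product, the $l$-th random factor is either $X_{i_l}$ or $X_{i_l}^\top$; since $(X^\top)_{a,b}=X_{b,a}$, the entry contributed is $x^{(i_l)}_{\psi(l),\psi(-l)}$ when $\epsilon(l)=1$ and $x^{(i_l)}_{\psi(-l),\psi(l)}$ when $\epsilon(l)=\top$, which the statement of the lemma abbreviates as $x^{(i_l)}_{\psi(l),\psi(-l)}(\epsilon(l))$. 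Crucially, the deterministic factors $D_{j_l}$ are unaffected by $\epsilon$, so they still contribute $d^{(j_l)}_{\psi(-l),\psi(\gamma(l))}$, and the cyclic permutation $\gamma$ determined by $(m_1,\ldots,m_r)$ is the same one that appears in the non-transposed derivation.

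Consequently, summing over all $\psi:\pm[m]\rightarrow[N]$ and pulling the deterministic product $\mathbf{D}(\psi)=\prod_{l=1}^m d^{(j_l)}_{\psi(-l),\psi(\gamma(l))}$ out of the joint cumulant by multi-linearity yields
$$\C_r(\text{Tr}(Y_1^\epsilon),\ldots,\text{Tr}(Y_r^\epsilon)) = N^{-m/2}\sum_{\psi:\pm[m]\rightarrow[N]} \mathbf{D}(\psi)\, \C_r\bigl(x_1^\epsilon(\psi),\ldots,x_r^\epsilon(\psi)\bigr),$$
where $x_k^\epsilon(\psi)=\prod_{l=M_{k-1}+1}^{M_k} x^{(i_l)}_{\psi(l),\psi(-l)}(\epsilon(l))$. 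The joint permutation invariance of the GUE entries (a common relabeling of rows and columns of each $X_i$ by a permutation of $[N]$ preserves the joint law, and this continues to hold after replacing any subset of the $X_i$ by their transposes, since transposition commutes with such relabeling) then implies that $\C_r(x_1^\epsilon(\psi),\ldots,x_r^\epsilon(\psi))$ depends on $\psi$ only through $ker(\psi)\in\cP(\pm[m])$. Grouping the $\psi$'s by kernel produces
$$\C_r(\text{Tr}(Y_1^\epsilon),\ldots,\text{Tr}(Y_r^\epsilon)) = N^{-m/2}\sum_{\pi\in\cP(\pm[m])}\Bigl[\sum_{\substack{\psi:\pm[m]\rightarrow[N]\\ ker(\psi)=\pi}}\mathbf{D}(\psi)\Bigr]\C_r\bigl(x_1^\epsilon(\psi^\pi),\ldots,x_r^\epsilon(\psi^\pi)\bigr),$$
for any fixed representative $\psi^\pi$ with $ker(\psi^\pi)=\pi$.

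To finish, I would apply Theorem 11.30 of \cite{NS}, which expresses a joint cumulant of products as a sum of mixed cumulants over the partitions $\tau\in\cP(m)$ satisfying $\tau\vee\gamma=1_m$, namely
$$\C_r\bigl(x_1^\epsilon(\psi^\pi),\ldots,x_r^\epsilon(\psi^\pi)\bigr) = \sum_{\substack{\tau\in\cP(m)\\ \tau\vee\gamma=1_m}} \C_\tau\bigl(x^{(i_1)}_{\psi^\pi(1),\psi^\pi(-1)}(\epsilon(1)),\ldots,x^{(i_m)}_{\psi^\pi(m),\psi^\pi(-m)}(\epsilon(m))\bigr).$$
Substituting this expansion into the previous display and interchanging the two finite sums gives formula (\ref{Equation: Cumulant expansion for epsilon cumulants}) verbatim. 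There is no substantive obstacle; the only point that deserves care is the verification that permutation invariance survives transposition, which is immediate from the identity $(X^\top)_{a,b}=X_{b,a}$ together with the fact that a simultaneous row-column relabeling acts on $X$ and on $X^\top$ in exactly the same way.
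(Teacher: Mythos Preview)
Your proof is correct and follows essentially the same approach as the paper: the paper's own proof is a single sentence stating that the derivation of (\ref{Equation: Cumulant expansion}) carries over verbatim, the only change being that when $\epsilon(l)=\top$ one picks up the $(j,i)$-entry of $X_{i_l}$ instead of the $(i,j)$-entry. You have simply written out those details explicitly, including the permutation-invariance check, which is exactly what the paper leaves implicit.
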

\begin{proof}
The proof is nearly the same as (\ref{Equation: Cumulant expansion}). The only difference is that if there is a transpose then we take the $(i,j)$-entry of $X^{\top}$ which is the same as taking the $(j,i)$-entry of $X$.
\end{proof}

Our goal is to find an upper bound for the absolute value of the $\epsilon$-cumulants. The proof strategy to follow will be the same as the regular cumulants.

The following corollary is a direct consequence of Theorem \ref{Theorem: Order of GUE case for epsilon cumulants for t()} and (\ref{Equation: Cumulant expansion for epsilon cumulants}). Its proof is the same as that for Lemma \ref{Corollary: Upper bound for cumulants of GUE}, and is therefore omitted.

\begin{corollary}\label{Corollary: Upper bound for cumulants of GUE for epsilon cumulants}
Let $\mathcal{X}\sim GUE$. For any $r\geq 1$, $m$ even and $\epsilon\in \{1,{\top}\}^m$,
\begin{equation}
|N^{r-2}\C_r(\text{Tr}(Y_1^\epsilon),\dots,\text{Tr}(Y_r^\epsilon))|\leq|\cP_2^i(m)| \prod_{k=1}^m ||D_{j_k}||,
\end{equation}
where
$$\cP_2^i(m)=\{\tau\in \cP_2(m): \tau\vee\gamma=1_m\text{ and }\tau\leq ker(i)\}.$$
In particular,
\begin{equation}
|N^{r-2}\C_r(\text{Tr}(Y_1^\epsilon),\dots,\text{Tr}(Y_r^\epsilon))|\leq m!! \prod_{k=1}^m ||D_{j_k}||.
\end{equation}
\end{corollary}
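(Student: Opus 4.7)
The plan is to mimic the proof of Corollary \ref{Corollary: Upper bound for cumulants of GUE} line by line, with the only differences being that (i) the cumulant expansion is the $\epsilon$-version from Lemma \ref{Lemma: Cumulant expansion for epsilon cumulants}, and (ii) the relevant pairing of $\pm[m]$ is $\pi_\tau^\epsilon$ from Notation \ref{Notation: definition of pi_tau for epsilon cumulants} rather than $\pi_\tau$. The Gaussian structure of GUE entries provides the same two crucial reductions as before: mixed cumulants of Gaussians vanish unless $\tau$ is a pairing, and independence between distinct $X_i$ matrices forces $\tau\leq ker(i)$. Hence only $\tau\in\cP_2^i(m)$ with $\tau\vee\gamma=1_m$ survive.

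First I would start from the expansion \eqref{Equation: Cumulant expansion for epsilon cumulants} and record that, whenever
\[
\C_\tau(x_{\psi^\pi(1),\psi^\pi(-1)}^{(i_1)}(\epsilon(1)),\dots,x_{\psi^\pi(m),\psi^\pi(-m)}^{(i_m)}(\epsilon(m)))\neq 0,
\]
the partition $\tau$ must be a pairing refining $ker(i)$, the corresponding value of $\C_\tau$ equals $1$ (a product of unit second cumulants of standard Gaussians, possibly combined with their conjugates/transposes), and Theorem \ref{Theorem: Order of GUE case for epsilon cumulants} guarantees $\pi\geq \pi_\tau^\epsilon$. Therefore the double sum over $\pi$ and $\tau$ collapses to
\[
N^{r-2}\C_r(\text{Tr}(Y_1^\epsilon),\dots,\text{Tr}(Y_r^\epsilon))=\sum_{\substack{\tau\in\cP_2(m)\\ \tau\vee\gamma=1_m\\ \tau\leq ker(i)}} N^{r-2-m/2}\sum_{\substack{\psi:\pm[m]\to[N]\\ ker(\psi)\geq \pi_\tau^\epsilon}}\mathbf{D}(\psi),
\]
where the inner sum over $\pi\geq\pi_\tau^\epsilon$ has been absorbed into the constraint on $ker(\psi)$.

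Next I would invoke the uniform bound provided by Theorem \ref{Theorem: Order of GUE case for epsilon cumulants}: for each surviving $\tau$,
\[
\Big| N^{r-2-m/2}\sum_{\substack{\psi:\pm[m]\to[N]\\ ker(\psi)\geq \pi_\tau^\epsilon}}\mathbf{D}(\psi)\Big|\leq \prod_{k=1}^m\|D_{j_k}\|.
\]
Summing over the admissible $\tau$'s and using the triangle inequality yields the first displayed inequality with factor $|\cP_2^i(m)|$. The second inequality is immediate from the trivial upper bound $|\cP_2^i(m)|\leq |\cP_2(m)|=(m-1)!!\leq m!!$.

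There is no genuine obstacle here: all of the combinatorial and graph-theoretic work has already been done in Theorem \ref{Theorem: Order of GUE case for epsilon cumulants}, which absorbed the extra case analysis caused by the transpose marks $\epsilon$. The only point that requires minor care is bookkeeping between the two shapes of blocks appearing in $\pi_\tau^\epsilon$ (namely $\{u,-v\},\{v,-u\}$ versus $\{u,v\},\{-u,-v\}$), but this is already encapsulated in the bound on $t(D^{\pi_\tau^\epsilon})$ provided by Theorem \ref{Theorem: Order of GUE case for epsilon cumulants} via Lemma \ref{Lemma: Order of sums associated to deterministic matrices}.
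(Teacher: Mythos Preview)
Your proposal is correct and follows exactly the route the paper takes: the paper's own proof simply states that the result is a direct consequence of Theorem \ref{Theorem: Order of GUE case for epsilon cumulants} together with the expansion \eqref{Equation: Cumulant expansion for epsilon cumulants}, with the argument proceeding verbatim as in Corollary \ref{Corollary: Upper bound for cumulants of GUE}. Your write-up spells out those steps accurately, including the reduction to pairings $\tau\in\cP_2^i(m)$, the identification $\C_\tau(\cdot)=1$, and the collapse of the sum over $\pi\geq\pi_\tau^\epsilon$ into a single constraint on $ker(\psi)$.
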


\subsection{General bounds of the cumulants of the GOE case}\label{Section: The order of GOE}

In order to get upper bounds for the quantities (\ref{Equation: Cumulants of monomials}) where $\mathcal{X}\sim GOE$,  we write each GOE random matrix as $(Z_i+Z_i^{\top})/\sqrt{2}$,  where $Z_i\sim GUE$. Then we are reduced to the upper bounds obtained in Section \ref{Subsection: uppper bound for epsilon cumulants}, by multi-linearity of cumulants. More precisely, we get the following bounds.

\begin{lemma}\label{Corollary: Upper bound for cumulants of GOE}
Suppose $\mathcal{X}\sim GOE$. For any $r\geq 1$ and $m$ even, we have
\begin{equation}
|N^{r-2}\C_r(\text{Tr}(Y_1),\dots,\text{Tr}(Y_r))|\leq 2^{m/2}|\cP_2^i(m)| \prod_{k=1}^m ||D_{j_k}||.
\end{equation}
Here $\cP_2^i(m)$ is defined as in Lemma \ref{Corollary: Upper bound for cumulants of GUE}. In particular,
\begin{equation}
|N^{r-2}\C_r(\text{Tr}(Y_1),\dots,\text{Tr}(Y_r))|\leq 2^{m/2}m!! \prod_{k=1}^m ||D_{j_k}||.
\end{equation}
\end{lemma}
\begin{proof}
For each $i$ we write $X_i=\frac{1}{\sqrt{2}}(Z_i+Z_i^{\top})$ where $Z_i\sim GUE$. Hence for $1\leq k\leq r$,
\begin{eqnarray*}
Y_k & = & \frac{1}{2^{m_r/2}}(Z_{i_{M_{k-1}+1}}+Z_{i_{M_{k-1}+1}}^{\top})D_{j_{M_{k-1}+1}}\cdots (Z_{i_{M_k}}+Z_{i_{M_k}}^{\top})D_{j_{M_k}} \\
& = & \frac{1}{2^{m_r/2}}\sum_{\epsilon(M_{k-1}+1),\dots,\epsilon(M_k)\in \{1,{\top}\}}Z_{i_{M_{k-1}+1}}^{\epsilon(M_{k-1})}D_{j_{M_{k-1}+1}}\cdots Z_{i_{M_k}}^{\epsilon(M_{k})}D_{j_{M_k}}.
\end{eqnarray*}
Therefore
\begin{eqnarray*}
N^{r-2}\C_r(\text{Tr}(Y_1),\dots,\text{Tr}(Y_r)) & = & \frac{1}{2^{m/2}}\sum_{\epsilon\in \{1,{\top}\}^m} \C_r(\text{Tr}(Y_1^\epsilon(Z)),\dots,\text{Tr}(Y_r^\epsilon(Z))),
\end{eqnarray*}
where $Y_k^\epsilon(Z)$ is the same as $Y_k^\epsilon$ defined in Definition \ref{Definition: epsilon cumulants} with the $X_i$ matrices being distributed as $GUE$. From Corollary \ref{Corollary: Upper bound for cumulants of GUE for epsilon cumulants} it follows,
\begin{eqnarray*}
|N^{r-2}\C_r(\text{Tr}(Y_1),\dots,\text{Tr}(Y_r))| & \leq & \frac{1}{2^{m/2}}\sum_{\epsilon\in \{1,{\top}\}^m} |\C_r(\text{Tr}(Y_1^\epsilon(Z)),\dots,\text{Tr}(Y_r^\epsilon(Z)))| \\
& \leq & \frac{1}{2^{m/2}}\sum_{\epsilon\in \{1,{\top}\}^m}|\cP_2^i(m)| \prod_{k=1}^m ||D_{j_k}|| \\
& = & 2^{m/2}|\cP_2^i(m)| \prod_{k=1}^m ||D_{j_k}||.
\end{eqnarray*}
This concludes the proof. 
\end{proof}

\subsection{General bounds of the cumulants of the Wigner case}\label{Section: The order of Wigner}

In this section, we consider general Wigner matrices with subexponential entries. As shown in  Example \ref{Example: Order of Wigner case} at the end of the section, in the Wigner case the $N$-order of the cumulants is different from the Gaussian case in general. More specifically, we show that  the order of the $N$-factor in the cumulant bound is $N^{1-r/2}$, which is square root of the order in the Gaussian case.

\begin{theorem}\label{Theorem: Order of Wigner case}
Let $\mathcal{X}\sim Wigner$. Assume $r\geq 2$. Let $\pi\in \cP(\pm[m])$ and $\tau\in \cP(m)$ be such that $\tau\vee\gamma=1_m$ and
$$\C_\tau(x_{\psi^\pi(1),\psi^\pi(-1)}^{(i_1)},\dots,x_{\psi^\pi(m),\psi^\pi(-m)}^{(i_m)})\neq 0.$$
Then,
$$\left| \sum_{\substack{\psi: \pm[m]\rightarrow [N] \\ ker(\psi)=\pi}}\mathbf{D}(\psi)\right| \leq N^{m/2+1-r/2}\prod_{k=1}^m ||D_{j_k}|| \#(\pi)! |p(\#(\pi))|,$$
where $p(m)$ is the set of integer partitions of $m$. 
\end{theorem}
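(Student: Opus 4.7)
My plan has two stages: first, reduce the theorem to a bound on the graph-theoretic exponent $t(D^{\pi})$, where $D$ is the oriented graph from Notation \ref{Notation: The graphs D and G}; second, establish the bound
$$t(D^{\pi})\leq m/2+1-r/2 \qquad(\ast)$$
by reducing to the Gaussian analysis of Section \ref{Section: The order of GUE}. For Stage 1, I would apply M\"{o}bius inversion on the partition lattice $\cP(\pm[m])$ to write
$$\sum_{ker(\psi)=\pi}\mathbf{D}(\psi)=\sum_{\sigma\geq\pi}\mu(\pi,\sigma)\sum_{ker(\psi)\geq\sigma}\mathbf{D}(\psi).$$
For each $\sigma\geq\pi$, Lemma \ref{Lemma: Order of sums associated to deterministic matrices} bounds the inner sum by $N^{t(D^{\sigma})}\prod_k\|D_{j_k}\|$, while Lemma \ref{Lemma: Order doesn't increase when doing quotient} gives $t(D^{\sigma})\leq t(D^{\pi})$. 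The classical identity $|\mu(\pi,\sigma)|=\prod_{C\in\sigma}(\#(\pi|_C)-1)!$ on the partition lattice, combined with the count of set partitions of $\#(\pi)$ elements with a given cycle type, yields $\sum_{\sigma\geq\pi}|\mu(\pi,\sigma)|\leq \#(\pi)!\,|p(\#(\pi))|$, so modulo $(\ast)$ the full estimate of the theorem follows.

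For Stage 2, the hypothesis $\C_\tau\neq 0$, together with the vanishing of odd cumulants and the self-adjointness $x_{i,j}=\overline{x_{j,i}}$, forces every block of $\tau$ to have even size and moreover forces that within each block $B=\{l_1,\dots,l_{2s}\}\in\tau$ all ordered pairs $(\psi^{\pi}(l_j),\psi^{\pi}(-l_j))$ lie in a common two-element index set. Consequently, for any pairing $\sigma\leq\tau$ compatible with these index patterns, the restriction of $\pi$ to $\pm B$ is at least as coarse as the Gaussian-type partition $\pi_{\sigma|_B}$ from Notation \ref{Notation: definition of pi_tau}, so $D^{\pi}$ can be realized as a quotient graph induced (in the sense of Definition \ref{Definition: Quotient graph induces by graph}) by a graph $T$ whose vertices are the per-class Gaussian pieces $D^{\pi_{\sigma|_C}}$, and whose edge data $\{V(e)\}$ encode the cross-block identifications of $\pi$ beyond $\pi_\sigma$; any within-class extra identifications only decrease $t$ by Lemma \ref{Lemma: Order doesn't increase when doing quotient}. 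Now apply Proposition \ref{Proposition: the existence of the pairing sigma} to pick $\sigma\leq\tau$ with $\ell\vcentcolon=\#(\sigma\vee\gamma)\leq r/2$. Each block $C$ of $\sigma\vee\gamma$ carries $r_C$ cycles of $\gamma$ and $m_C$ paired indices with $\sum_C r_C=r$, $\sum_C m_C=m$, and $\sigma|_C\vee\gamma|_C=1_C$, so Theorem \ref{Theorem: Order of GUE case} applied to each class yields $t(D^{\pi_{\sigma|_C}})\leq m_C/2+2-r_C$.

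Since $\tau\vee\gamma=1_m$, the cross-block identifications encoded in $T$ must merge all $\ell$ pieces, so $T$ is connected and Corollary \ref{Corollary: Order of exponent when inducing graph is connected} applies:
$$t(D^{\pi})\leq\sum_C t(D^{\pi_{\sigma|_C}})-\ell+1\leq\sum_C\bigl(m_C/2+2-r_C\bigr)-\ell+1=m/2+\ell-r+1\leq m/2+1-r/2,$$
which is exactly $(\ast)$. The main obstacle of the argument is the structural identification of $D^{\pi}$ as a quotient graph induced by a connected gluing graph $T$ with Gaussian-type vertices: one has to verify that a pairing $\sigma\leq\tau$ compatible with the fixed two-value index patterns inside each block of $\tau$ exists, that $T$ inherits connectedness from $\tau\vee\gamma=1_m$, and that any further identifications of $\pi$ only help. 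The role of Proposition \ref{Proposition: the existence of the pairing sigma} is decisive here: it furnishes the bound $\ell\leq r/2$, which is precisely what upgrades the Gaussian exponent $N^{2-r}$ to the Wigner exponent $N^{1-r/2}$.
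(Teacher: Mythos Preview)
Your two-stage plan is exactly the paper's argument: M\"{o}bius inversion over $[\pi,1]\cong\cP(\#(\pi))$ to reduce to the graph exponent bound $t(D^\pi)\leq m/2+1-r/2$, then Proposition~\ref{Proposition: the existence of the pairing sigma} to pick $\sigma\leq\tau$ with $\#(\sigma\vee\gamma)\leq r/2$, a per-class Gaussian bound on the blocks of $\sigma\vee\gamma$, and gluing via Corollary~\ref{Corollary: Order of exponent when inducing graph is connected}. The Stage~1 bound $\sum_{\sigma\geq\pi}|\mu(\pi,\sigma)|\leq \#(\pi)!\,|p(\#(\pi))|$ matches the paper's computation line for line.

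There is one genuine inaccuracy in Stage~2. You invoke Notation~\ref{Notation: definition of pi_tau} and Theorem~\ref{Theorem: Order of GUE case}, which encode the pattern $\{u,-v\},\{v,-u\}$ for a pair $\{u,v\}\in\sigma$. But the hypothesis $\C_\tau\neq 0$ only forces, for each $\{u,v\}$ in a block of $\tau$, that \emph{either} $[u]_\pi=[-v]_\pi,\ [v]_\pi=[-u]_\pi$ \emph{or} $[u]_\pi=[v]_\pi,\ [-u]_\pi=[-v]_\pi$; which alternative occurs can vary pair by pair. In the second alternative your claimed inequality $\pi_{\sigma|_B}\leq\pi$ (with $\pi_{\sigma|_B}$ from Notation~\ref{Notation: definition of pi_tau}) is simply false. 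The paper handles this by working with the $\epsilon$-pairing $\pi_\sigma^\epsilon$ of Notation~\ref{Notation: definition of pi_tau for epsilon cumulants} (so that whichever alternative occurs one has $\pi_\sigma^\epsilon\leq\pi$) and by applying Theorem~\ref{Theorem: Order of GUE case for epsilon cumulants} to the per-class pieces. Since that theorem gives the identical bound $m_C/2+2-r_C$, your final chain of inequalities survives unchanged once you make this substitution; but as written, the specific references to Notation~\ref{Notation: definition of pi_tau} and Theorem~\ref{Theorem: Order of GUE case} do not support the claim.
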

\begin{proof}
We denote
\begin{equation}\label{Equation: Sum associated to product of deterministic matrices with equality}
S^0_{\pi}(N) \vcentcolon= \sum_{\substack{\psi: \pm[m]\rightarrow [N] \\ ker(\psi)=\pi}}\mathbf{D}(\psi).
\end{equation}
First, let us prove a  result similar to Lemma \ref{Lemma: Order of sums associated to deterministic matrices}. We claim
\begin{equation}\label{Aux: Inequality for inductive trace}
|S_\pi^0(N)|\leq N^{t(D^\pi)}{{\prod_{k=1}^m ||D_{j_k}||}} \#(\pi)!|p(\#(\pi))|.
\end{equation}
Here $D^\pi$ is the quotient graph of $D$ defined as in Notation \ref{Notation: The graphs D and G} and $t(D^\pi)$ is defined as in Lemma \ref{Lemma: Order of sums associated to deterministic matrices}. From \cite[Lemma 2.6]{M}, we have
\begin{eqnarray*}
S_\pi^0(N) = \sum_{\pi^\prime\in \cP(\pi)}\prod_{B\in \pi^\prime} (-1)^{|B|-1}(|B|-1)!S_{\pi^\prime}(N),
\end{eqnarray*}
where $\cP(\pi)$ denotes the set of partitions of the blocks of $\pi$. From Lemma \ref{Lemma: Order of sums associated to deterministic matrices}, we know $|S_{\pi^\prime}(N)|\leq N^{t(D^{\pi^\prime})}\prod_{k=1}^m ||D_{j_k}||$ for any $\pi^\prime$. Moreover from Lemma \ref{Lemma: Order doesn't increase when doing quotient}, we have $t(D^{\pi^\prime})\leq t(D^\pi)$. Hence
\begin{eqnarray*}
|S_\pi^0(N)| &\leq & \sum_{\pi^\prime\in \cP(\pi)}\prod_{B\in \pi^\prime}(|B|-1)!|S_{\pi^\prime}(N)| \\
& \leq & \sum_{\pi^\prime\in \cP(\pi)}\prod_{B\in \pi^\prime}(|B|-1)!N^{t(D^\pi)}\prod_{k=1}^m ||D_{j_k}|| \\
&=& N^{t(D^\pi)}\prod_{k=1}^m ||D_{j_k}||\sum_{\pi^\prime\in \cP(\pi)}\prod_{B\in \pi^\prime}(|B|-1)!.
%& \leq & N^{t(D^\pi)}\prod_{i=1}^m ||D_i||\#(\pi)!^2.
\end{eqnarray*}
To prove (\ref{Aux: Inequality for inductive trace}), it suffices to prove that for an integer $n$, $\sum_{\pi\in \cP(n)}\prod_{B\in \pi}(|B|-1)! \leq n!|p(n)|$. Indeed, the partitions $\cP(n)$ can be counted by $B_n(1,\dots,1)$ where $B_n(x_1,\dots,x_n)$ is the Bell polynomial (see \cite[Section 3.3]{Comtet}), which is defined as
$$B_n(x_1,\dots,x_n)=n!\sum_{\substack{j_1,\dots,j_n \\ j_1+\cdots +nj_n=n}}\prod_{i=1}^n \frac{x_i^{j_i}}{i!^{j_i}j_i!}.$$
In the above sum, the index $j_i$ determines how many partitions there are with $j_i$ blocks of size $i$. Hence
\begin{eqnarray*}
\sum_{\pi\in \cP(n)}\prod_{B\in \pi}(|B|-1)! & \leq & \sum_{\pi\in \cP(n)}\prod_{B\in \pi}|B|! = n!\sum_{\substack{j_1,\dots,j_n \\ j_1+\cdots +nj_n=n}}\prod_{i=1}^n \frac{1}{i!^{j_i}j_i!}\prod_{i=1}^n i!^{j_i} \\
& = & n! \sum_{\substack{j_1,\dots,j_n \\ j_1+\cdots +nj_n=n}}\prod_{i=1}^n \frac{1}{j_i!} \leq  n!\sum_{\substack{j_1,\dots,j_n \\ j_1+\cdots +nj_n=n}}1 = n! |p(n)|.
\end{eqnarray*}
We conclude the proof by recalling $t(D^\pi)\leq m/2+1-r/2$ from Theorem \ref{Theorem: Order of Wigner case for t()}. 
\end{proof}

\begin{corollary}\label{Corollary: Upper bound for cumulants of Wigner}
Let $\mathcal{X}\sim Wigner$. For any $r\geq 2$ and $m$ even, we have
\begin{equation}
|N^{r/2-1}\C_r(\text{Tr}(Y_1),\dots,\text{Tr}(Y_r))|\leq B_m (2\mathsf{C})^mm!^2 |p(m)||p(m/2)| \prod_{k=1}^m ||D_{j_k}||,
\end{equation}
where $B_m$ is the $m^{th}$-Bell number and $p(n)$ is the set of integer partitions of $n$.
\end{corollary}
\begin{proof}
First, from (\ref{Equation: Cumulant expansion}), we have 
\begin{eqnarray}\label{Aux: Aux of condition C1 assumed}
&&|N^{r/2-1}\C_r(\text{Tr}(Y_1),\dots,\text{Tr}(Y_r))| \leq \nonumber \\
&& \sum_{\pi\in \cP(\pm[m])}\sum_{\substack{\tau\in \cP(m) \\ \tau\vee\gamma=1_m}}\left|N^{r/2-1-m/2} \sum_{\substack{\psi: \pm[m]\rightarrow [N] \\ ker(\psi)=\pi}}\mathbf{D}(\psi)\right| |\C_\tau(x_{\psi^\pi(1),\psi^\pi(-1)}^{(i_1)},\dots,x_{\psi^\pi(m),\psi^\pi(-m)}^{(i_m)})|.
\end{eqnarray}
By Theorem \ref{Theorem: Order of Wigner case}, whenever
\begin{align}
\C_\tau(x_{\psi^\pi(1),\psi^\pi(-1)}^{(i_1)},\dots,x_{\psi^\pi(m),\psi^\pi(-m)}^{(i_m)})\neq 0, \label{111101}
\end{align}
we have $$\left|N^{r/2-1-m/2} \sum_{\substack{\psi: \pm[m]\rightarrow [N] \\ ker(\psi)=\pi}}\mathbf{D}(\psi)\right| \leq \prod_{k=1}^m ||D_{j_k}||\#(\pi)!|p(\#(\pi))|.$$
We also know that (\ref{111101}) holds
only when $\tau$ has all blocks of even size and $\tau\leq ker(i)$. Moreover, for a given partition $\tau$ with blocks $B_1,\dots,B_s$,  we know the partition $\pi$ is greater than or equal to the partition $\pi_\tau$.  Here $\pi_\tau$ is a partition with $2s$ blocks, and each block is determined by a block of $\tau$. Finally, observe that for a given $\tau$, by the assumption in Definition \ref{Definition: Wigner Matrix}, we have
$$|\C_\tau(x_{\psi^\pi(1),\psi^\pi(-1)}^{(i_1)},\dots,x_{\psi^\pi(m),\psi^\pi(-m)}^{(i_m)})|\leq \prod_{i} {\mathsf{C}^{|B_i|}}|B_i|!.$$
Hence, if we let $\cP_{even}(m)$ be the set of all partitions of $[m]$ with all blocks of even size, we have
\begin{equation}\label{Aux: 2}
\frac{|N^{r/2-1}\C_r(\text{Tr}(Y_1),\dots,\text{Tr}(Y_r))|}{\prod_{k=1}^m ||D_{j_k}||} \leq
\sum_{\substack{\tau\in \cP_{even}(m) \\ \tau\leq ker(i)}} B_{2\#(\tau)}2^{m-\#(\tau)}(2\#(\tau))!|p(2\#(\tau)|\prod_{\tau=\{B_1,\dots,B_s\}}{\mathsf{C}^{|B_s|}}|B_s|!.
\end{equation}
In the last expression the $2^{m-\#(\tau)}$ factor appears because for each $\tau$ and each block $B_i$ of $\tau$ there are $2^{|B_i|-1}$ possible distinct partitions $\pi_\tau$. Summing over $i$ gives $2^{m-\#(\tau)}$. Similarly, the $(2\#(\tau))!$ and $|p(2\#(\tau))|$ factors appear because $\pi_\tau \leq \pi$ and hence $\#(\pi)\leq \#(\pi_\tau)=2\#(\tau)$. Further, we notice that the number of partitions $\tau$ (without considering the restriction $\tau\leq ker(i)$) can be counted by $B_{m}(0,1,\dots,0,1)$, where $B_m(x_1,\dots,x_m)$ is the Bell polynomial. 
% The term $B_m(x_1,\dots,x_m)$ can be expressed as
% \begin{eqnarray*}
% B_m(x_1,\dots,x_m) &=& \sum_{\substack{j_1,\dots,j_m \\ j_1+2j_2\cdots+mj_m=m}} m!\prod_{i=1}^m \frac{x_i^{j_i}}{i!^{j_i}j_i!}.
% \end{eqnarray*}
% The index $j_i$ determines how many partitions there are with $j_i$ blocks of size $i$ while the integer $s$ determines how many blocks has the partition. 
Since $x_i=0$ for $i$ odd and $x_i=1$ for $i$ even, we have
\begin{eqnarray*}
B_m(0,1,\dots,0,1) &=& \sum_{\substack{j_2, \dots ,j_m \\ 2j_2+\cdots+mj_m=m}} m!\prod_{i=1}^{m/2} \frac{1}{(2i)!^{j_{2i}}j_{2i}!}.
\end{eqnarray*}
Combining this with (\ref{Aux: 2})  yields
\begin{eqnarray*}
\frac{|N^{r/2-1}\C_r(\text{Tr}(Y_1),\dots,\text{Tr}(Y_r))|}{\prod_{k=1}^m ||D_{j_k}||} &\leq &
\sum_{\substack{j_2,\dots,j_m \\ 2j_2+\cdots+mj_m=m}} (2s)!|p(2s)| m! B_{2s}2^{m-s}\prod_{i=1}^{m/2} \frac{1}{(2i)!^{j_{2i}}j_{2i}!}\prod_{i=1}^{m/2}(2i)!^{j_{2i}}{\mathsf{C}^{2i j_{2i}}} \\
&& \text{ with }s=j_2+\cdots +j_m \\
&\leq & (2\mathsf{C})^mm!^2|p(m)| B_{m} \sum_{\substack{j_2,\dots ,j_m \\ 2j_2+\cdots+mj_m=m}} \prod_{i=1}^{m/2} \frac{1}{j_{2i}!},
\end{eqnarray*}
where in last inequality we used that $\frac{B_{2s}}{2^{s-1}}\leq B_{m}$ for any $1\leq s\leq m/2$ and $s\leq m/2$. Finally, we have
\begin{eqnarray*}
\frac{|N^{r/2-1}\C_r(\text{Tr}(Y_1),\dots,\text{Tr}(Y_r))|}{\prod_{k=1}^m ||D_{j_k}||} &\leq &(2\mathsf{C})^m m!^2 |p(m)| B_{m}\sum_{\substack{j_2,\dots ,j_m \\ 2j_2+\cdots+mj_m=m}} \prod_{i=1}^{m/2} \frac{1}{j_{2i}!} \\
&\leq & (2\mathsf{C})^m m!^2 |p(m)| B_{m}\sum_{\substack{j_2,\dots ,j_m \\ 2j_2+\cdots+mj_m=m}} 1. \\
\end{eqnarray*}
To conclude, it is enough to observe that the sum in the right hand side is the number of ways to partition a set of $m$ elements into integers of even size. This can be seen as the number of ways to partition a set of $m/2$ elements into integers of any size by taking the half of each type. For example $8$ has the type $2+2+2+2$, i.e. a set of $8$ elements can be partitioned into $4$ integers each one being $2$. This can be associated with $4$ having type $1+1+1+1$. Thus,
$$\sum_{\substack{j_2,\dots,j_m\geq 0 \\ 2j_2+\cdots+mj_m=m}} 1=|p(m/2)|.$$
\end{proof}

\begin{example}\label{Example: Order of Wigner case}
Let $x\sim Unif(-1/2,1/2)$. It can be verified that odd cumulants of $x$ are $0$ due to symmetry while the $2n$ cumulant is $\frac{B_{2n}}{{2n}}$, where $B_n$ is the $n$-Bell number. Let $D=(D_{i,j})_{i,j=1}^N$ be the matrix
with $D_{i,j}=1$ if $i=j$ or $j=i+1$, and $D_{i,j}=0$ otherwise. 
%$$
%\begin{bmatrix}
%    1 & 1 & 0 & \dots & 0 & 0 \\
%    0 & 1 & 1 & \dots & 0 & 0 \\
%    \vdots & \vdots & \vdots & \ddots & \vdots & \vdots \\
%    0 & 0 & 0 & \dots & 1 & 1 \\
%    0 & 0 & 0 & \dots & 0 & 1 \\
%\end{bmatrix}.
%$$
%For a vector $v=(v_1,\dots,v_N)$ such that $||v||=1$ we have
%\begin{eqnarray*}
%||Dv|| &=& ||(v_1+v_2,v_2+v_3,\dots, v_{N-1}+v_N, v_N)|| \\
%&=& \sqrt{(v_1+v_2)^2+ \cdots + (v_{N-1}+v_N)^2+v_N^2} \\
%& \leq & \sqrt{2(v_1^2+v_2^2)+ \cdots + 2(v_{N-1}^2+v_N^2)+v_N^2} \\
%& \leq & \sqrt{4(v_1^2+\cdots +v_N^2)} = 2.
%\end{eqnarray*}
It is elementary to check $sup_N ||D||\leq 2$. Let $r\in\mathbb{N}$ be even and let us consider the quantity
$$\C_r(\text{Tr}(XD),\dots,\text{Tr}(XD)),$$
where $X=X_N=\frac{1}{\sqrt{N}}(x_{i,j})_{i,j=1}^N$ is a Wigner matrix with $x_{i,j}\overset{d}{=} x$ for any $i,j$. From (\ref{Equation: Cumulant expansion}) we get
\begin{eqnarray*}
\C_r(\text{Tr}(XD),\dots,\text{Tr}(XD))= N^{-r/2}\sum_{\pi\in \cP(\pm[r])}\sum_{\substack{\tau\in \cP(r)\\ \gamma\vee\tau=1_r}}\left[ \sum_{\substack{\psi: \pm[r]\rightarrow [N]\\ ker(\psi)=\pi}}D(\psi) \right]\C_{\tau}(x_{\psi^\pi(1),\psi^\pi(-1)}^{(1)},\dots,x_{\psi^\pi(r),\psi^\pi(-r)}^{(r)}),
\end{eqnarray*}
where $\gamma\in S_r$ is the identity permutation. From the condition $\tau\vee\gamma=1_r$ we conclude that $\tau=1_r$ is the only choice and therefore, 
$$\C_{\tau}(x_{\psi^\pi(1),\psi^\pi(-1)}^{(1)},\dots,x_{\psi^\pi(r),\psi^\pi(-r)}^{(r)})=\C_{r}(x_{\psi^\pi(1),\psi^\pi(-1)}^{(1)},\dots,x_{\psi^\pi(r),\psi^\pi(-r)}^{(r)}).$$
If this quantity is nonzero, $x_{\psi^\pi(1),\psi^\pi(-1)}^{(1)},\dots,x_{\psi^\pi(r),\psi^\pi(-r)}^{(r)}$ must be all the same random variable. This is possible only if either $\pi$ has only one block or $\pi$ consists of two blocks $A$ and $B$ where for any $1\leq k\leq r$ either $k\in A$ and $-k\in B$ or $k\in B$ and $-k\in A$. When $\pi$ has only one block the corresponding cumulant is $\C_r(x_{1,1})$ while in the second case one has a cumulant of the form $\C_r(x_{1,2}^{\epsilon_1},\dots,x_{1,2}^{\epsilon_r})$, for some $\epsilon_i\in \{1,-1\}$ that depends on how the blocks $A$ and $B$ are chosen. In either case we obtain $\C_r(x)=\frac{B_r}{r}$. Hence, in these two cases, we have the contributing terms as follows
\begin{align*}\sum_{\substack{j_1,j_{-1}\dots,j_r,j_{-r}=1 \\ j_1=j_{-1}=\cdots = j_r=j_{-r}}}^N\prod_{i=1}^r D_{j_{-i},j_{i}},\qquad {\text{and}} \qquad \sum_{\substack{j_1,j_{-1}\dots,j_r,j_{-r}=1 \\ j_1=\cdots = j_r \\ j_{-1}=\cdots = j_{-r} \\ j_1\neq j_{-1}}}^N\prod_{i=1}^r D_{j_{-i},j_{i}},
\end{align*}
which equals $N$ and $N-1$, respectively. 
Hence, $\C_r(\text{Tr}(XD),\dots,\text{Tr}(XD))$ has leading order $N^{1-r/2}$.
\end{example}

Nevertheless, we notice that in the special case when $D$ is diagonal and the diagonal entries of $X$ are Gaussian, we still obtain the order $N^{2-r}$.

\section{Statulevi$\check{c}$ius condition and quantitative laws}\label{Section: The Statulevicius condition}

In this section, we provide proofs to all our main results, based on the estimates obtained in previous sections. First, we derive the cumulant bounds in Theorems \ref{Theorem: Upper bound of cumulants GUE} and \ref{Theorem: Upper bound of cumulants Wigner}. Then we use these bounds to prove Theorems \ref{Theorem: Berry-Esseen bound GUE/GOE Main result 1} and \ref{Theorem: Berry-Esseen bound Wigner}.

Let us first introduce some facts from \cite{DJ}, regarding the cumulant method for quantitative CLTs. For a real random variable $Y$ centered and normalized, one says that it satisfies the {\it Statulevi$\check{c}$ius condition} $(S_\delta)$, if its cumulants can be upper bounded as
$$|K_j(Y)|\leq \frac{j!^{1+\delta}}{\Delta^{j-2}},$$
for all $j\geq 3$,  with some $\delta \geq 0$ and $\Delta >0$. Let us also denote by $Z$ the standard normal variable. First, we have \cite[Theorem 2.3]{DJ} which states the following CLT with Cram\'{e}r correction: If $Y$ satisfies $(S_\delta)$, then for all $x\in [0,\Delta_\delta)$,
\begin{equation}\label{Equation: Theorem of DJ for Cramer correction}
\mathbb{P}(Y\geq x)=e^{{L(x)}}\mathbb{P}(Z\geq x)\left(1+O\left(\frac{x+1}{\Delta^{1/(1+2\delta)}}\right)\right),
\end{equation}
where 
\begin{align}
|{L(x)}|= O(x^3/\Delta^{1/(1+2\delta)}), \qquad {\text{and}} \qquad \Delta_\delta=\frac{1}{6}\left(\frac{\Delta}{3\sqrt{2}}\right)^{1/(1+2\delta)}.
\end{align}
Next, we have \cite[Theorem 2.4]{DJ} which states the following Berry-Esseen type bound: If $Y$ satisfies $(S_\delta)$, then
\begin{equation}\label{Equation: Theorem of DJ for Berry Esseen}
\sup_{x\in\mathbb{R}}\big|P(Y\geq x)-P(Z\geq x)\big|\leq \frac{C_\delta}{\Delta^{1/(1+2\delta)}},
\end{equation}
for some constant $C_\delta>0$ that does not depend on $Y$ or $\Delta$. Finally, we have \cite[Theorem 2.5]{DJ} which states the following concentration inequality: If $Y$ is mean $0$ and for all $j$
\begin{equation}\label{Condition: Similar to Statulevivius}
|\C_j(Y)|\leq \frac{j!^{1+\delta}}{2}\frac{H}{\Bar{\Delta}^{j-2}},
\end{equation}
for some $\delta\geq 0$ and $H,\Bar{\Delta}>0$, there exists a constant $A>0$, such that for all $x\geq 0$,
\begin{equation}\label{Equation: Theorem of DJ for Concetration}
\mathbb{P}(Y\geq x)\leq A \exp\left(-\frac{1}{2}\frac{x^2}{H+x^{2-\alpha}/\Bar{\Delta}^{\alpha}}\right),
\end{equation}
where the constant $A$ does not depend on $Y, H, \Bar{\Delta}$ or $\delta$,  and here $\alpha=1/(1+\delta)$.

Note that, our cumulants bounds in Theorems \ref{Theorem: Upper bound of cumulants GUE}-\ref{Theorem: Upper bound of cumulants Wigner} can be regarded as 
 Statulevi$\check{c}$ius type upper bound for the random variable $\mathbf{X}$, which can also be regarded as the bounds for the normalized variable $\mathfrak{X}$ by adjusting the values of $\theta_1,\theta_2$ and $\theta_3$, under the assumption $\mathbf{C}_2$.

Let us now prove Theorems \ref{Theorem: Upper bound of cumulants GUE} and \ref{Theorem: Upper bound of cumulants Wigner}.

\begin{proof}[Proof of Theorem \ref{Theorem: Upper bound of cumulants GUE}]
Let us prove it in details for GUE,  the same proof applies to the GOE case. Let $r\geq 3$. By the multi-linearity of the cumulants and  translation invariance, we have
$$N^{r-2}\C_r(\mathbf{X})=N^{r-2}\sum_{\textbf{v}} \C_r(Y_1(\textbf{v}),\dots,Y_r(\textbf{v})).$$
Here $\textbf{v}$ is a vector of size $r$ with entries from the set $\{1,\dots,t\}$, allowing repetitions. And for $1\leq k\leq r$, the $k^{th}$ index of $\textbf{v}$, $\textbf{v}(k)$, determines the variable $Y_k(\textbf{v})$ via
$$Y_k(\textbf{v})=X_{i^{(\mathbf{v}(k))}_1}D_{j^{(\mathbf{v}(k))}_1}\cdots X_{i^{(\mathbf{v}(k))}_{m_{\mathbf{v}(k)}}}D_{j^{(\mathbf{v}(k))}_{m_{\mathbf{v}(k)}}}.$$
Now, we use Lemma \ref{Corollary: Upper bound for cumulants of GUE}. For each choice of indices $\mathbf{v}$, the corresponding cumulant satisfies the bound
$$|N^{r-2}\C_r(Y_1(\textbf{v}),\dots,Y_r(\textbf{v}))|\leq m(\mathbf{v})!! \prod_{k=1}^r \prod_{u=1}^{m_{\textbf{v}(k)}}||D_{j^{(k)}_{u}}||,$$
where $m(\mathbf{v})=\sum_{k=1}^r m_{\mathbf{v}(k)}$ is even, otherwise the last quantity is simply $0$. Note that regardless of the choice $\mathbf{v}$,  the quantity $m(\mathbf{v})$ is upper bounded by $Mr$. On the other hand, if $K\leq 1$ then $\prod_{k=1}^r \prod_{u=1}^{m_{\textbf{v}(k)}}||D_{j^{(k)}_{u}}||\leq 1$, otherwise $\prod_{k=1}^r \prod_{u=1}^{m_{\textbf{v}(k)}}||D_{j^{(k)}_{u}}||\leq K^{m(\mathbf{v})} \leq K^{Mr}$. Here $M$ and $K$ are defined as in (\ref{Definition: M}) and (\ref{Definition: K}) respectively.  Hence,
\begin{eqnarray*}
|N^{r-2}\C_r(\mathbf{X})| & \leq & \sum_{\mathbf{v}}|N^{r-2}\C_r(Y_1(\mathbf{v}),\dots,Y_r(\mathbf{v}))| \\
&\leq & \sum_{\mathbf{v}} \{Mr\}!! \max\{K,1\}^{Mr},
\end{eqnarray*} 
where in the last expression, the term $\{Mr\}$ means $Mr$ if $Mr$ is even or $Mr-1$ if $Mr$ is odd. It is  then elementary to bound
\begin{eqnarray}\label{Aux: Proof of Statelivicus equ 1}
|N^{r-2}\C_r(\mathbf{X})| \leq \frac{t^r \max\{1,K\}^{Mr} \{Mr\}!}{ (\{Mr\}/2)!2^{\{Mr\}/2}} =t^r \max\{1,K\}^{Mr} \times \frac{\{Mr\}!}{2^{\{Mr\}/2}(\{Mr\}/2)!}. 
\end{eqnarray}
 Let $\beta={2^{M/2}}/({t\max\{1,K\}^M})$. 
%From (\ref{Aux: Proof of Statelivicus equ 1}) it follows
%\begin{eqnarray*}
%\frac{|(\beta N)^{r-2}\C_r(\mathbf{X})|}{r!^{3M/2}} &\leq & \beta^{r-2} \times \frac{t^r \max\{1,K\}^{Mr}}{\sqrt{c}^r} \times \frac{Mr!(Mr)!}{2^{Mr/2}(Mr/2)!r!^{3M/2}} = \frac{1}{\beta^2 M^{3Mr/2}}\times \frac{Mr!(Mr)!}{2^{Mr/2}(Mr/2)!r!^{3M/2}}.
%\end{eqnarray*}
%Hence,
%\begin{eqnarray}\label{Aux: Proof of Statelivicus equ 2}
%0 \leq \lim_{r\rightarrow\infty}\frac{|(\beta N)^{r-2}\C_r(\mathbf{X})|}{r!^{3M/2}}\leq \lim_{r\rightarrow\infty}\frac{1}{\beta^2 M^{3Mr/2}}\times \frac{Mr!(Mr)!}{2^{Mr/2}(Mr/2)!r!^{3M/2}}.
%\end{eqnarray}
%Using Striling's approximation in the upper bound of (\ref{Aux: Proof of Statelivicus equ 2}) we get
%\begin{eqnarray*}
%0 & \leq & \lim_{r\rightarrow\infty}\frac{|(\beta N)^{r-2}\C_r(\mathbf{X})|}{r!^{3M/2}}\leq \lim_{r\rightarrow\infty}\frac{1}{\beta^2 M^{3Mr/2}}\times \frac{\sqrt{2\pi Mr}(Mr/e)^{Mr}\sqrt{2\pi Mr}(Mr/e)^{Mr}}{2^{Mr/2}\sqrt{\pi Mr}(Mr/2e)^{Mr/2}\sqrt{2\pi r}^{3M/2}(r/e)^{3Mr/2}} \\
%& = & \lim_{r\rightarrow\infty}\frac{1}{\beta^2 M^{3Mr/2}}\times \frac{\sqrt{2M}M^{3Mr/2}}{r^{(3M-2)/4}(2\pi)^{(3M-2)/4}} \\
%& = & \lim_{r\rightarrow\infty}\frac{\sqrt{2M}}{\beta^2 r^{(3M-2)/4}(2\pi)^{(3M-2)/4}}=0.
%\end{eqnarray*}
Applying basic  Stirling's approximation one can show that  there exists a constant $C \equiv C(K,M,t)>0$ such that
%\begin{equation}\label{Aux: Proof of Statelivicus equ 3}
%\frac{|(\beta N)^{r-2}\C_r(\mathbf{X})|}{r!^{3M/2}} \leq C,
%\end{equation}
%for any $N$ and $r\geq 3$. From (\ref{Aux: Proof of Statelivicus equ 3}) we get
$$|\C_r(\mathbf{X})|\leq C\frac{r!^{M/2}}{(\beta N)^{r-2}}.$$
Absorbing $C$ into $\beta$ we can conclude that there exists a constant $\theta_1\equiv \theta_1(K,M,t)>0$ such that
$$|\C_r(\mathbf{X})|\leq \frac{r!^{M/2}}{(\theta_1 N)^{r-2}}.$$
To finish our proof, note that the cumulants of GUE and GOE are upper bounded by the same quantity up to a factor $2^{Mr/2}$. Hence, asymptotically the same reasoning applies to GOE by adjusting the parameter $\beta$ (and hence $\theta_1$) appropriately.
\end{proof}

%\begin{proof}[Proof of Theorem \ref{Theorem: Upper bound of cumulants GUE without deterministic}]
%We proceed as in the proof of Theorem \ref{Theorem: Upper bound of cumulants GUE}. It suffices to prove that there exists constants $C \equiv C(M, c, t)$ and $\beta \equiv \beta(M,c,t)$, such that
%\begin{equation}
%|\C_r(\mathbf{X})|\leq C\frac{r!^{M/2}}{(\beta N)^{r-2}},
%\end{equation}
%for any $N$ and $r\geq 3$. For $M=1$ it follows trivially as $\mathcal{X}$ is Gaussian. For $M\geq 2$ the proof follows as in the proof of Theorem \ref{Theorem: Upper bound of cumulants GUE} with the only difference that cumulants are upper bounded by $\{Mr\}!!$.
%\end{proof}

\begin{proof}[Proof of Theorem \ref{Theorem: Upper bound of cumulants Wigner}]
It suffices to prove that there exists constants $C\equiv C(K, M, \mathsf{C}, t)$ and $\beta \equiv \beta(K,M,\mathsf{C}, t)$, such that
\begin{equation}
|\C_r(\mathbf{X})|\leq C\frac{r!^{3M}}{(\beta \sqrt{N})^{r-2}},
\end{equation}
for any $N$ and $r\geq 3$. The proof can be done similarly to that of Theorem \ref{Theorem: Upper bound of cumulants GUE}. From Corollary \ref{Corollary: Upper bound for cumulants of Wigner}, the cumulant $|N^{r/2-1}K_r(\mathbf{X})|$ is now upper bounded by $$B_{Mr}{(2\mathsf{C})^{Mr}}(Mr)!^2 |p(Mr)||p(Mr/2)| \max\{1,K\}^{Mr}t^r.$$ {{
First, it is known that $B_m\leq m!$. Further, it is also  well known that for sufficiently large $n$, 
$$|p(n)|\sim \frac{1}{4n\sqrt{3}}\exp\left(\pi \sqrt{\frac{2n}{3}}\right),$$
which further implies that  for any $n$,
$$|p(n)|\leq \frac{A}{n}\exp\left(\pi \sqrt{\frac{2n}{3}}\right)$$
for some constant $A>0$ uniformly in $n$. Therefore $|N^{r/2-1}K_r(\mathbf{X})|$ is now upper bounded by 
\begin{eqnarray*}
&& B_{Mr}{(2\mathsf{C})^{Mr}}(Mr)!^2 |p(Mr)|^2 \max\{1,K\}^{Mr}t^r \\
&\leq & (Mr)!{(2\mathsf{C})^{Mr}}(Mr)!^2 A^2\frac{1}{M^2r^2}\exp\left(2\pi \sqrt{\frac{2Mr}{3}}\right) \max\{1,K\}^{Mr}t^r \\
&\leq & (Mr)!^3{(2\mathsf{C})^{Mr}} A^2\frac{1}{M^2r}\exp\left(2\pi Mr\right) \max\{1,K\}^{Mr}t^r \\
\end{eqnarray*}
In the last expression the term after $(Mr)!^3$ is of the form $\frac{(C_1)^r}{C_2r} $ for some constants $C_1,C_2>0$. Therefore $|N^{r/2-1}K_r(\mathbf{X})|$ is upper bounded by $\frac{(C_1)^r}{C_2r} (Mr)!^3 $ for some constants $C_1,C_2>0$. 
}}
In case $\mathbf{C}_1'$ is additionally assumed,
we claim that
\begin{equation}\label{Aux: Claiming of last proof}
|N^{r/2-1}\C_r(\text{Tr}(Y_1),\dots,\text{Tr}(Y_r))|\leq (2\mathsf{C})^mm!|p(m/2)| \prod_{k=1}^m ||D_{j_k}||.
\end{equation}
Indeed, the proof follows similarly as Corollary \ref{Corollary: Upper bound for cumulants of Wigner}. The major difference is now
\begin{eqnarray*}
\left|N^{r/2-1-m/2}\sum_{\substack{\psi: \pm [m]\rightarrow [N] \\ ker(\psi)=\pi}}\mathbf{D}(\psi)\right| & \leq & N^{r/2-1-m/2}\sum_{\substack{\psi: \pm [m]\rightarrow [N] \\ ker(\psi)=\pi}}\mathbf{D}^{abs}(\psi).
\end{eqnarray*}
Here $\mathbf{D}^{abs}(\psi)$ is defined as $\mathbf{D}(\psi)$ via replacing the $d_{i,j}$'s by their absolute values. 
Hence (\ref{Aux: Aux of condition C1 assumed}) becomes,
\begin{eqnarray}\label{Aux: auxiliar of last proof}
&&|N^{r/2-1}\C_r(\text{Tr}(Y_1),\dots,\text{Tr}(Y_r))| \leq \nonumber \\
&& \sum_{\pi\in \cP(\pm[m])}\sum_{\substack{\tau\in \cP(m) \\ \tau\vee\gamma=1_m}}N^{r/2-1-m/2}\sum_{\substack{\psi: \pm [m]\rightarrow [N] \\ ker(\psi)=\pi}}\mathbf{D}^{abs}(\psi)|\C_\tau(x_{\psi^\pi(1),\psi^\pi(-1)}^{(i_1)},\dots,x_{\psi^\pi(m),\psi^\pi(-m)}^{(i_m)})|.
\end{eqnarray}
Let us remind that
$$\C_\tau(x_{\psi^\pi(1),\psi^\pi(-1)}^{(i_1)},\dots,x_{\psi^\pi(m),\psi^\pi(-m)}^{(i_m)}) \neq 0$$
only if $\tau\in \cP_{even}(m)$ and $\pi\geq \pi_\tau$ where $\pi_\tau$ is a partition with $2\#(\tau)$ blocks.{{ Here let us remind that $\cP_{even}(m)$ denotes the set of all partitions of $[m]$ with all blocks of even size}}. Further, the partition $\pi_\tau$ can be chosen in $2^{m-\#(\tau)}$ ways. In the sum (\ref{Aux: auxiliar of last proof}) all the terms are positive, hence
\begin{eqnarray*}
&&|N^{r/2-1}\C_r(\text{Tr}(Y_1),\dots,\text{Tr}(Y_r))| \leq \nonumber \\
&& \sum_{{\tau\in \cP_{even}(m)}}2^{m-\#(\tau)}\sum_{\substack{\pi\in \cP(\pm[m]) \\ \pi \geq \pi_\tau}}N^{r/2-1-m/2}\sum_{\substack{\psi: \pm [m]\rightarrow [N] \\ ker(\psi)=\pi}}\mathbf{D}^{abs}(\psi)|\C_\tau(x_{\psi^\pi(1),\psi^\pi(-1)}^{(i_1)},\dots,x_{\psi^\pi(m),\psi^\pi(-m)}^{(i_m)})|,
\end{eqnarray*}
because we might be over counting some partitions $\pi$ but as the terms are all positive we get the inequality. Consequently, we have 
\begin{eqnarray*}
&&|N^{r/2-1}\C_r(\text{Tr}(Y_1),\dots,\text{Tr}(Y_r))| \leq \nonumber \\
&& 2^{m-1}\sum_{{\tau\in \cP_{even}(m)}}N^{r/2-1-m/2}\sum_{\substack{\psi: \pm [m]\rightarrow [N] \\ ker(\psi)\geq \pi_\tau}}\mathbf{D}^{abs}(\psi)|\C_\tau(x_{\psi^\pi(1),\psi^\pi(-1)}^{(i_1)},\dots,x_{\psi^\pi(m),\psi^\pi(-m)}^{(i_m)})|.
\end{eqnarray*}
Further, note that 
$$N^{r/2-1-m/2}\sum_{\substack{\psi: \pm [m]\rightarrow [N] \\ ker(\psi)\geq \pi_\tau}}\mathbf{D}^{abs}(\psi)\leq \prod_{k=1}^m ||D^{abs}_{j_k}||$$
 according to Lemma \ref{Lemma: Order of sums associated to deterministic matrices} and the fact that $t(D^{\pi_\tau})\leq m/2+1-r/2$ as proved in Theorem \ref{Theorem: Order of Wigner case for t()}. The remaining proof of (\ref{Aux: Claiming of last proof}) is then the same as Corollary \ref{Corollary: Upper bound for cumulants of Wigner}. Correspondingly, the rest of the proof of the cumulant bound follows via replacing $(Mr)!^3$ by $(Mr)!$.
\end{proof}

We then use the conclusions in Theorems \ref{Theorem: Upper bound of cumulants GUE} and \ref{Theorem: Upper bound of cumulants Wigner}  together with (\ref{Equation: Theorem of DJ for Cramer correction}), (\ref{Equation: Theorem of DJ for Berry Esseen}) and (\ref{Equation: Theorem of DJ for Concetration}) to prove Theorems \ref{Theorem: Berry-Esseen bound GUE/GOE Main result 1} and \ref{Theorem: Berry-Esseen bound Wigner}.

\begin{proof}[Proof of Theorems \ref{Theorem: Berry-Esseen bound GUE/GOE Main result 1} and \ref{Theorem: Berry-Esseen bound Wigner}]
We first prove Theorem \ref{Theorem: Berry-Esseen bound GUE/GOE Main result 1}.  Observe that thanks to Theorem \ref{Theorem: Upper bound of cumulants GUE}, we know the cumulants of $\mathbf{X}$ satisfy the Statulevi$\check{c}$ius condition $(S_{\gamma})$ with $\Delta=\theta_1 N$ and $\gamma=M/2-1$, which further implies that the normalized random variable $\mathfrak{X}$ also satisfies this condition with slightly changed $\theta_1$, thanks to the condition $\mathbf{C}_2$. Thus $1+2\gamma=M-1$. Therefore from (\ref{Equation: Theorem of DJ for Cramer correction}), it follows part $(i)$, and from (\ref{Equation: Theorem of DJ for Berry Esseen}), it follows part $(ii)$. To prove part $(iii)$, observe that Theorem \ref{Theorem: Upper bound of cumulants GUE} says that $\mathbf{X}$ and also $\mathfrak{X}$ satisfy condition (\ref{Condition: Similar to Statulevivius}) with $\alpha=1/(1+\gamma)=1/(M/2)=2/M$. Therefore, from  (\ref{Equation: Theorem of DJ for Concetration}) it follows part $(iii)$.

Finally,  that the proof of Theorem \ref{Theorem: Berry-Esseen bound Wigner} follows the same as the proof of Theorem \ref{Theorem: Berry-Esseen bound GUE/GOE Main result 1}, by changing the corresponding $\gamma$ and $\Delta$.
\end{proof}

%\backmatter

\section*{Acknowledgments} We would like to thank the reviewers for helpful comments and suggestions. 
Z. Bao is supported by
Hong Kong RGC Grant 16303922, NSFC12222121 and NSFC12271475. D. Munoz George is supported by Hong Kong RGC Grant 16304724 and  National Key R\&D Program of China (No. 2023YFA1010400).

\end{document}